\documentclass[a4paper]{amsart}
\usepackage{amsmath,amsthm,amssymb,latexsym,epic,bbm,comment,mathrsfs}
\usepackage{graphicx,enumerate,stmaryrd,xcolor}
\usepackage[all,2cell,knot]{xy}
\xyoption{2cell}

\newtheorem{theorem}{Theorem}
\newtheorem{lemma}[theorem]{Lemma}
\newtheorem{corollary}[theorem]{Corollary}
\newtheorem{proposition}[theorem]{Proposition}
\newtheorem{conjecture}[theorem]{Conjecture}
\newtheorem{example}[theorem]{Example}

\newtheorem{remark}[theorem]{Remark}
\usepackage[all]{xy}
\usepackage[active]{srcltx}
\usepackage[parfill]{parskip}
\usepackage{enumerate}

\colorlet{MAGENTA}{magenta}
\newcommand{\tto}{\twoheadrightarrow}

\begin{document}

\title[Projective functors on holonomic simple modules]
{Applying projective functors to \\arbitrary holonomic simple modules}
\author[M.~Mackaay, V.~Mazorchuk and V.~Miemietz]{Marco Mackaay, 
Volodymyr Mazorchuk and Vanessa Miemietz}

\begin{abstract}
We prove that applying a projective functor to a holonomic simple
module over a semi-simple finite dimensional complex Lie algebra
produces a module that has an essential semi-simple submodule of
finite length. This implies that holonomic 
simple supermodules over certain
Lie superalgebras are quotients of modules that are induced
from simple modules over the even part. We also provide some
further insight into the structure of Lie algebra modules that are
obtained by applying projective functors to simple modules.
\end{abstract}

\maketitle

\section{Motivation and description of the results}\label{s1}

\subsection{Motivation from Lie superalgebras}\label{s1.1}

Let $\mathfrak{g}$ be a semi-simple (or reductive) finite dimensional Lie algebra
over $\mathbb{C}$. Let $\mathfrak{s}=\mathfrak{s}_0\oplus \mathfrak{s}_1$ 
be a finite dimensional complex Lie superalgebra such that 
$\mathfrak{s}_0\cong \mathfrak{g}$. One of the basic representation-theoretic
problems for $\mathfrak{s}$ is the classification of simple 
$\mathfrak{s}$-supermodules, see, for example, \cite{Ma10,CM21,CCM21}. 
A natural way to address this problem is to look for some connection 
between simple $\mathfrak{s}$-supermodules and simple $\mathfrak{g}$-modules.
In \cite{Ma10,CM21,CCM21} this approach was successfully applied to
reduce the problem of classification of simple $\mathfrak{s}$-supermodules 
to that of simple $\mathfrak{g}$-modules. The latter problem is 
very difficult, the only known case in which it is ``solved''
(in the sense that it is reduced to the problem of classification of 
equivalence classes of irreducible elements over a certain 
non-commutative principal ideal domain)
is $\mathfrak{sl}_2$, see \cite{Bl}. 

The module categories $\mathfrak{s}$-Mod and $\mathfrak{g}$-Mod are connected
by the usual induction and restriction functors. Moreover, the latter two
functors are not only adjoint in the obvious way (that is, induction is left
adjoint to restriction), but they are also biadjoint, up to parity shift
(that is, induction is right adjoint to restriction, up to parity shift
that depends on the parity of the dimension of $\mathfrak{s}_1$).
This is equivalent to saying that induction is isomorphic to
coinduction, up to parity shift. Since the
universal enveloping algebra $U(\mathfrak{g})$ is noetherian and
the universal enveloping algebra $U(\mathfrak{s})$ is finite over $U(\mathfrak{g})$,
every simple $U(\mathfrak{s})$-supermodule $S$ has a simple
quotient, say $L$, when considered as a $U(\mathfrak{g})$-module.
By adjunction, it follows that $S$ is a submodule of a module
which is coinduced from a simple $U(\mathfrak{g})$-module.
That is a very natural fact. 

Now we recall that induction and coinduction coincide, up to a parity
shift. It follows that  $S$ is a submodule of a module
which is induced from a simple $U(\mathfrak{g})$-module. 
It would be more natural to expect $S$ to
be a quotient of a module induced from a simple $U(\mathfrak{g})$-module.
However, it seems that there is no easy argument for why that should be the case.
This property is an essential ingredient in \cite{CCM21} where 
the claim is proved in type $A$ using very specific type $A$ properties
established in \cite{MS08,MM16}.

The idea is that, in order to use the correct adjunction, we need to show that 
$S$, when restricted to  $U(\mathfrak{g})$, has a simple submodule. Note that
we already know that $S$ is a submodule of an induced simple module. Therefore
it is enough to show that any $U(\mathfrak{s})$-supermodule which is induced
from a simple $U(\mathfrak{g})$-module,
when restricted back to $U(\mathfrak{g})$, has finite type socle, that 
is, it has an essential semi-simple submodule of finite length.

At the level of $U(\mathfrak{g})$-modules, the composition of induction to 
$U(\mathfrak{s})$ followed by restriction back to $U(\mathfrak{g})$
can be described as tensoring 
with a finite dimensional  $U(\mathfrak{g})$-module, namely, with
$\bigwedge \mathfrak{s}_1$. This naturally
leads to the formulation of our main result in Theorem~\ref{thm1} below.

\subsection{Main result}\label{s1.2}
Recall that a simple module  over a
finitely generated associative algebra of finite
Gelfand-Kirillov dimension is called
{\em holonomic} provided that it has the minimal possible 
Gelfand-Kirillov dimension among all simple modules
with the same annihilator.

The main result of this paper is the following statement:
\vspace{2mm}

{\bf Theorem~\ref{thm1}.} 
{\em Let $\mathfrak{g}$ be a semi-simple finite dimensional Lie algebra
over $\mathbb{C}$. Let $L$ be a holonomic simple $\mathfrak{g}$-module and
let $V$ be a finite dimensional $\mathfrak{g}$-module. Then the 
$\mathfrak{g}$-module $V\otimes_{\mathbb{C}} L$ has an
essential semi-simple submodule of finite length.}
\vspace{2mm}

As an immediate corollary, it follows that any holonomic
simple $\mathfrak{s}$-supermodule
is, indeed, a quotient of a module which is induced from a simple $\mathfrak{g}$-module.

In type $A$, the assertion of Theorem~\ref{thm1} is true for all simple
$\mathfrak{g}$-modules, not necessarily holonomic ones, see \cite[Theorem~23]{CCM21}. Of course,
we expect the assertion of Theorem~\ref{thm1} to be true for all simple
$\mathfrak{g}$-modules in all types. However, at the moment we do not see how to prove that.

\subsection{Structure of $V\otimes_{\mathbb{C}} L$}\label{s1.3}

The main difficulty in proving the main result lies in the fact that the module
$V\otimes_{\mathbb{C}} L$, in general, while being noetherian, 
does not have to be artinian, see \cite{Sta} for an example. At the same time,
in all known ``natural'' examples, for instance, if we assume that 
$L$ belongs to the BGG category $\mathscr{O}$, see \cite{BGG,Hu},
or to the category of weight modules with finite dimensional weight spaces, see \cite{Mat},
or to the category of Gelfand-Zeitlin modules, see \cite{FO,EMV},
the module $V\otimes_{\mathbb{C}} L$ has finite length. Therefore in our
approach we cannot really rely on the intuition developed during
the study of these classical categories of modules. We need
to understand the structure of $V\otimes_{\mathbb{C}} L$ in very
abstract terms and in a situation where we lack easy computable examples.

Possible subquotients of interest in $V\otimes_{\mathbb{C}} L$ 
split naturally into three categories:
\begin{itemize}
\item simple subquotients whose Gelfand-Kirillov dimension equals
$\mathrm{GKdim}(L)$;
\item simple subquotients whose Gelfand-Kirillov dimension is strictly 
smaller than $\mathrm{GKdim}(L)$;
\item non-simple subquotients that we call {\em strange},
see Subsection~\ref{s4.4} for details, and which are
defined by the property that they have Gelfand-Kirillov dimension 
$\mathrm{GKdim}(L)$ but they do not have any simple 
subquotient of Gelfand-Kirillov dimension $\mathrm{GKdim}(L)$.
\end{itemize}
Our proof of the main result essentially reduces to the statement that
$V\otimes_{\mathbb{C}} L$ cannot have strange submodules. 

A major part of the paper is devoted to taking a closer  look at the 
general structure of $V\otimes_{\mathbb{C}} L$. As mentioned above, 
this module might fail to have finite length.  However, one can define
a natural Serre subquotient of the category of all $\mathfrak{g}$-module
in which $V\otimes_{\mathbb{C}} L$ does have finite length, see
Subsection~\ref{s9.3}. The structure of $V\otimes_{\mathbb{C}} L$ as
an object of this Serre subquotient is similar in spirit to what was called 
the {\em rough structure} of generalized Verma  modules in \cite{MS08}.

The correct setup for the study of modules of the form 
$V\otimes_{\mathbb{C}} L$ is to combine them all into a 
certain birepresentation of the bicategory of projective functors
associated to the algebra $\mathfrak{g}$. In the case when $L$
is a simple highest weight module, these birepresentations appear
frequently and were studied extensively in many papers, see
\cite{MS08,Ma10b,MMMTZ2} and references therein. In the general
case, it is natural to expect that the corresponding 
birepresentations behave similarly to the case of highest weight
modules. One possible direction of this expectation is formulated, 
in precise terms, in Conjecture~\ref{conj7} in Subsection~\ref{s3.1}. 
This conjecture asserts that the birepresentation in question is
{\em simple transitive} in the terminology of \cite{MM16}.

In type $A$, the conjecture is proved in Subsection~\ref{s4.3}.
In fact,  in type $A$, we establish an equivalence between
a birepresentation in the general case and a birepresentation
in the highest weight case. We do not expect such an equivalence
for other types, in general, as we know from \cite{MMMTZ2}  that,
outside type $A$, not all simple transitive birepresentations of projective 
functors can be modeled naively using highest weight modules
(a modeling via highest weight modules is possible,  but it 
requires an upgrade to the level of (co)algebra $1$-morphisms and 
the corresponding categories of (co)modules). It would be really
interesting if, in general type, each simple transitive
birepresentation of projective functors turned out to be constructible
directly starting from some simple (but not necessarily 
highest weight) module. At the moment, we do not know whether
this is true or not and  where to look for such simple modules.

Another interesting aspect of the structure of 
$V\otimes_{\mathbb{C}} L$ which we analyze is the following:
There is a natural pre-order $\triangleright$ on the set of
isomorphism classes of simple $\mathfrak{g}$-modules given by
$L\triangleright L'$ provided that $L'$  is a quotient of
$V\otimes_{\mathbb{C}} L$, for some $V$. In 
Conjecture~\ref{conj7-2} we predict that $\triangleright$
is, in fact, an equivalence relation. Again, in type $A$,
we can prove this conjecture, see Subsection~\ref{s4.2}.
We find it very surprising that this, intuitively very 
natural expectation, seems to be very non-trivial in reality
and even in type $A$ its proof requires quite heavy machinery.
At the moment we do not know how to prove this conjecture in general.

\subsection{Structure of the paper}\label{s1.4}

The paper is organized as follows: in Section~\ref{s2new} we collect
all necessary preliminaries. In Sections~\ref{s3}, \ref{s4} and \ref{s9}, we
study the general structure of modules of the form $V\otimes_{\mathbb{C}} L$
in more detail. Section~\ref{s3} contains all necessary preliminaries in
order to formulate Conjecture~\ref{conj7} (in Subsection~\ref{s3.1}) and
Conjecture~\ref{conj7-2} (in Subsection~\ref{s3.2}). The two conjectures
are compared in Subsection~\ref{s3.4}.
Section~\ref{s4} contains  proofs of
Conjecture~\ref{conj7} and Conjecture~\ref{conj7-2} in type $A$.
The main result is proved in  
Section~\ref{s2} and is extended beyond the holonomic case
to some further special cases
outside type $A$ in Section~\ref{s2.17s}.
Section~\ref{s9} is devoted to the study of strange subquotients
of modules of the form $V\otimes_{\mathbb{C}} L$ as well as 
Serre subquotients of the category of $\mathfrak{g}$-modules
in which modules of the form $V\otimes_{\mathbb{C}} L$ have finite length.

\vspace{1mm}

\subsection*{Acknowledgments}
The first author is partially supported by Funda{\c c}{\~a}o para a 
Ci{\^ e}ncia e a Tecnologia (Portugal),
project UID/MAT/04459/2013 (Center for Mathematical Analysis, Geometry 
and Dynamical Systems - CAMGSD).
The second author is partially supported by the Swedish Research Council.
The first and the third authors thank Uppsala University for hospitality
during their visit in August 2023 when a major part of the research 
reported in this paper was done.
The second author thanks Hankyung Ko, Kevin Coulembier and Chih-Whi Chen 
for stimulating discussions.

\section{Preliminaries}\label{s2new}

\subsection{Category $\mathscr{O}$}\label{s2.2}

Fix a triangular decomposition $\mathfrak{g}=\mathfrak{n}_-\oplus 
\mathfrak{h}\oplus \mathfrak{n}_+$ of $\mathfrak{g}$. Associated
to this decomposition we have the corresponding BGG category $\mathscr{O}$,
see \cite{BGG,Hu}. Simple objects in $\mathscr{O}$ are the simple
highest weight modules $L(\lambda)$, where $\lambda\in\mathfrak{h}^*$.
The module $L(\lambda)$ is the unique simple quotient of the
Verma module $\Delta(\lambda)$.

Let $Z(\mathfrak{g})$ be the center of the universal enveloping algebra
$U(\mathfrak{g})$ of $\mathfrak{g}$. Then 
$\mathscr{O}$ decomposes into a direct sum of $\mathscr{O}_\chi$,
where $\chi:Z(\mathfrak{g})\to \mathbb{C}$ is a central character of $U(\mathfrak{g})$.
The category $\mathscr{O}_\chi$ is a full subcategory of $\mathscr{O}$
consisting of all modules on which  the  kernel of $\chi$ acts 
locally nilpotently. An important fact about
$\mathscr{O}_\chi$ is that it is always non-zero. In other words, any character
of $Z(\mathfrak{g})$ is realizable as the central character of some
$L(\lambda)$, see \cite[Section~7]{Di}. For a fixed $\chi$, the set 
of all $\lambda$ such that $L(\lambda)$ has central character $\chi$
is an orbit of the Weyl group $W$ of $(\mathfrak{g},\mathfrak{h})$ on $\mathfrak{h}^*$
with respect to the so-called {\em dot-action}, that is the shift of the 
natural action by half the sum of all positive roots.

If $L$ is some simple $\mathfrak{g}$-module (not necessarily in 
category $\mathscr{O}$), then the annihilator
$\mathrm{Ann}_{U(\mathfrak{g})}(L)$ of $L$ in $U(\mathfrak{g})$
is a primitive ideal and it is realizable as the annihilator of
some $L(\lambda)$, see \cite{Du}. 

For $\lambda\in \mathfrak{h}^*$, we have the 
indecomposable projective cover
$P(\lambda)$ of $L(\lambda)$ in $\mathscr{O}$ and the indecomposable 
injective envelope $I(\lambda)$ of $L(\lambda)$ in $\mathscr{O}$.

We denote by $\mathbf{R}\subset \mathfrak{h}^*$ the root system of 
$\mathfrak{g}$ with respect to $\mathfrak{h}$. Our choice of 
triangular decomposition above gives rise to the decomposition of
$\mathbf{R}$ into a disjoint union of positive roots $\mathbf{R}_+$
and negative roots $\mathbf{R}_-$. We denote by $\pi$ the 
corresponding basis of $\mathbf{R}$. We also denote by
$\Xi$ the root lattice $\mathbb{Z}[\mathbf{R}]$.

By definition, a weight in $\mathfrak{h}^*$ is {\em integral} if it is a weight of some finite 
dimensional $\mathfrak{g}$-module. We denote by $\Lambda$ the lattice of 
all integral weights. Note that $\Xi\subset \Lambda$ is a subgroup of
finite index. This index is the determinant of the Cartan matrix for
$\mathfrak{g}$, in particular, $\Xi=\Lambda$ only in types $E_8$, $F_4$ and $G_2$.

For a weight $\lambda$, we denote by $W_\lambda$
the integral Weyl group of $\lambda$, that is the subgroup
of $W$ generated by all reflections $s$ for which 
$s\cdot\lambda-\lambda$ is an integral multiple of a root. 
We also denote by $W'_\lambda$ the stabilizer of 
$\lambda$ in $W_\lambda$. We call $\lambda\in\mathfrak{h}^*$
{\em regular} if $W'_\lambda=\{e\}$. If $\lambda\in\mathfrak{h}^*$
is not regular, it is called  {\em singular}. We call
$\lambda$ {\em dominant} if $w\cdot\lambda\leq \lambda$,
for all $w\in W_\lambda$.

We note that the categories $\mathscr{O}_\chi$ are usually decomposable.
For $\lambda\in\mathfrak{h}^*$, we denote by $\mathscr{O}_\lambda$
the indecomposable direct summand (block) of $\mathscr{O}$
containing $L(\lambda)$. Let $\chi=\chi_{{}_\lambda}$ be the central character
of $L(\lambda)$. Consider $W\cdot\lambda$ and define on this finite set
the equivalence relation $\equiv$ as follows: for 
$\mu,\nu\in W\cdot\lambda$, set $\mu\equiv\nu$ provided
that $W_\mu\cdot\mu=W_\nu\cdot\nu$. If $\{\lambda_1,\dots,\lambda_k\}$ 
is any cross-section  of equivalence classes, then $\mathscr{O}_\chi$ decomposes 
into the direct sum $\mathscr{O}_{\lambda_1}\oplus\dots\oplus \mathscr{O}_{\lambda_k}$.

\subsection{Projective functors}\label{s2.3}

In this subsection we recall the definition and basic properties of 
projective functors, as introduced in \cite{BG}.

Let $\mathscr{M}$ denote the category of all $\mathfrak{g}$-modules
on which the action of $Z(\mathfrak{g})$ is locally finite.
Note that $\mathscr{O}\subset \mathscr{M}$. Similarly to $\mathscr{O}$,
the category $\mathscr{M}$ decomposes into a product of
the full subcategories $\mathscr{M}_\chi$, where $\chi$ is a
central character, defined as follows: $\mathscr{M}_\chi$ consists of
all objects on which the kernel of $\chi$ acts locally nilpotently.
Clearly, $\mathscr{O}_\chi\subset \mathscr{M}_\chi$.

For any finite dimensional $\mathfrak{g}$-module $V$, tensoring with
$V$ preserves both $\mathscr{M}$ and $\mathscr{O}$. A {\em 
projective functor} is an endofunctor of $\mathscr{M}$ (or $\mathscr{O}$)
which is isomorphic to a direct summand of tensoring with some $V$.
The functor $V\otimes_{\mathbb{C}}{}_-$ is biadjoint to 
$V^*\otimes_{\mathbb{C}}{}_-$, for any finite dimensional
$\mathfrak{g}$-module $V$, and hence each projective functor has a 
biadjoint projective functor. Consequently, any projective functor
is exact. Furthermore, each projective functor
is isomorphic to a direct sum of indecomposable
projective functors. Indecomposable projective functors are classified
by their restriction to $\mathscr{O}$. 

Let $\mathscr{O}'$ and $\mathscr{O}''$ be two indecomposable blocks of 
$\mathscr{O}$.  Let  $L(\lambda_1)$, $L(\lambda_2),\dots,L(\lambda_r)$ 
and $L(\mu_1)$, $L(\mu_2),\dots,L(\mu_s)$ be complete and 
irredundant lists of simples in $\mathscr{O}'$ and $\mathscr{O}''$,
respectively.  Assume that $\lambda_1$ and $\mu_1$ are the weights in the above lists
that are dominant with respect to the corresponding integral Weyl groups
(each list contains a unique such dominant weight).

Non-zero projective functors from $\mathscr{O}'$ to $\mathscr{O}''$ 
exist if and only if $\lambda_1-\mu_1$ is an integral weight.
Indecomposable projective functors from $\mathscr{O}'$ to $\mathscr{O}''$
are in bijection with those $\mu_i$ that are $W'_{\lambda_1}$-anti-dominant with respect
to the dot-action. In fact, for each such $\mu_i$, there is
a unique indecomposable projective functor, denoted $\theta_{\lambda_1,\mu_i}$, 
from $\mathscr{O}'$ to $\mathscr{O}''$ that sends $P(\lambda_1)=\Delta(\lambda_1)$ to $P(\mu_i)$.

Let now $\chi'$ and $\chi''$ be two central characters.
The above can be used to classify indecomposable projective 
functors from $\mathscr{M}_{\chi'}$ to $\mathscr{M}_{\chi''}$.
Let $\lambda$ and $\nu$ be some weights such that $L(\lambda)$ 
and $L(\nu)$ have the central characters $\chi'$ and $\chi''$, respectively. 
Without loss of generality we may assume that  $\lambda$ is dominant
with respect to its integral Weyl group. 

Projective functors from $\mathscr{M}_{\chi'}$ to $\mathscr{M}_{\chi''}$
exist if and only if $(W\cdot \nu)\cap (\lambda+\Lambda)$ is not empty.
If this condition is satisfied, we may assume
$\nu\in \lambda+\Lambda$ without loss of generality. Let 
\begin{displaymath}
(W\cdot \nu)\cap (\lambda+\Lambda)=
\{\nu_1,\nu_2,\dots,\nu_t\}.
\end{displaymath}
Then indecomposable projective functors from
$\mathscr{M}_{\chi'}$ to $\mathscr{M}_{\chi''}$ are 
exactly the functors $\theta_{\lambda,\nu_i}$, where $\nu_i$
is $W'_\lambda$-anti-dominant. We note that the Serre subcategory of
$\mathscr{O}$ generated by $L(\nu_1)$, $L(\nu_2)$,\dots,
$L(\nu_t)$ does not have to be indecomposable, and hence our
indecomposable projective functors from $\mathscr{M}_{\chi'}$ to 
$\mathscr{M}_{\chi''}$ are not classified, in the general case, by 
projective functors from an indecomposable block of $\mathscr{O}$
to an indecomposable block of $\mathscr{O}$.

\subsection{Harish-Chandra bimodules}\label{s2.4}

An alternative way to look at projective functors is using 
Harish-Chandra bimodules. A $\mathfrak{g}$-$\mathfrak{g}$-bimodule
$B$ is called a {\em Harish-Chandra bimodule} provided that it is 
finitely generated as a bimodule and the adjoint action of 
$\mathfrak{g}$ on it is locally finite with finite multiplicities.
A typical example of a Harish-Chandra bimodule is the quotient
of $U(\mathfrak{g})$ by the ideal generated by the kernel of
some central character. The category of all Harish-Chandra bimodules
is denoted $\mathscr{H}$. 

The category $\mathscr{H}$ is, naturally, a monoidal category,
where the monoidal structure is given by tensoring over $U(\mathfrak{g})$.
As a monoidal category, $\mathscr{H}$ is naturally 
$\Lambda/\Xi$-graded. Namely, for a coset $\xi\in \Lambda/\Xi$,
the corresponding homogeneous component $\mathscr{H}^\xi$ consists
of all bimodules $B$ such that, for any finite dimensional 
simple $\mathfrak{g}$-module $V$, the fact that 
the multiplicity $[B^{\mathrm{ad}}:V]>1$ implies that the
support of $V$ belongs to $\xi$. For a Harish-Chandra bimodule
$B$, we will denote by $B^\xi$ its projection onto $\mathscr{H}^\xi$.

Let us explain how this grading works. Given a 
$U(\mathfrak{g})$-$U(\mathfrak{g})$-bimodule $B$, the left action
of $U(\mathfrak{g})$ on $B$ is given by a map
$U(\mathfrak{g})\otimes_{\mathbb{C}}B\to B$,
which is a homomorphism of both left and right $\mathfrak{g}$-modules and hence also
of adjoint $\mathfrak{g}$-modules.
As $U(\mathfrak{g})$, considered as an adjoint $\mathfrak{g}$-module, 
is a direct sum of simple finite dimensional 
$\mathfrak{g}$-modules whose support belongs to $\Xi$,
the above action map restricts to 
$U(\mathfrak{g})\otimes_{\mathbb{C}}B^\xi\to B^\xi$,  for every $\xi$.
Similarly, we have a restriction of the right action map,
which implies that $B^\xi$ is, indeed, a 
$U(\mathfrak{g})$-$U(\mathfrak{g})$-subbimodule of $B$.

The above grading is motivated by the fact that the action of
projective functors on category $\mathscr{O}$ behaves ``slightly better'' 
than on other natural categories of $\mathfrak{g}$-modules. 
As was mentioned in Subsection~\ref{s2.3}, projective functors 
are uniquely determined (up to isomorphism) by the image of dominant Verma modules
in category $\mathscr{O}$. Outside category $\mathscr{O}$ it might
happen that analogues of dominant Verma modules do not exist and
two non-isomorphic projective functors map some module which we
want to understand to isomorphic modules (see 
\cite[Theorem~B]{Ch}, \cite[Section~2.3]{Jo2} and
also the connection
of this phenomenon to Kostant's problem, as was observed by
Johan K{\aa}hrstr{\"o}m and explained in \cite{MMM}). 
In such situation,
one could try to consider the action of the ``smaller'' category
$\mathscr{H}^\Xi$ and, essentially, use similar arguments
as in the case of category $\mathscr{O}$. For example, this
was done in \cite{KhM} in the context of the study of generalized
Verma modules. We are going to use this kind of trick in 
Section~\ref{s4}.

Tensoring with finite dimensional $\mathfrak{g}$-modules both on the
left and on the right preserves Harish-Chandra bimodules. Therefore
indecomposable projective functors can be viewed as summands of
the Harish-Chandra bimodules 
$V\otimes_\mathbb{C}(U(\mathfrak{g})/(\mathrm{ker}(\chi)))$,
where $\chi$ is a central character.
In fact, the indecomposable projective functors 
with domain $\mathscr{M}_\chi$
correspond exactly to 
the indecomposable summands of
$V\otimes_\mathbb{C}(U(\mathfrak{g})/(\mathrm{ker}(\chi)))$.

For two $\mathfrak{g}$-modules $M$ and $N$, we can consider 
the $\mathfrak{g}$-$\mathfrak{g}$-bimodule $\mathrm{Hom}_{\mathbb{C}}(M,N)$
and its subbimodule $\mathcal{L}(M,N)$ which consists of all elements
of $\mathrm{Hom}_{\mathbb{C}}(M,N)$ on which the adjoint action of 
$\mathfrak{g}$ is locally finite. If 
$\mathrm{Hom}_{\mathfrak{g}}(V\otimes_{\mathbb{C}}M,N)$ is finite
dimensional, for any simple finite dimensional $\mathfrak{g}$-module
$V$, and both quotients $Z(\mathfrak{g})/\mathrm{Ann}_{Z(\mathfrak{g})}(M)$ 
and $Z(\mathfrak{g})/\mathrm{Ann}_{Z(\mathfrak{g})}(N)$ are finite dimensional,
then $\mathcal{L}(M,N)$ is a Harish-Chandra bimodule, see \cite[Satz~6.30]{Ja}.
For example, this is the case if both modules $M$ and $N$ belong to category $\mathscr{O}$.

If $M=N$, we have a natural embedding of $U(\mathfrak{g})/\mathrm{Ann}_{U(\mathfrak{g})}(M)$
into $\mathcal{L}(M,M)$. A module $M$ is called {\em Kostant positive} provided
that this embedding is an isomorphism. A module $M$ is called {\em weakly
Kostant positive} provided that the natural embedding 
of $U(\mathfrak{g})/\mathrm{Ann}_{U(\mathfrak{g})}(M)$
into $\mathcal{L}(M,M)^\Xi$ is an isomorphism.

\subsection{Soergel's combinatorial description and 
the integral part}\label{s2.5}

Denote by $\mathscr{O}_{\mathrm{int}}$ the full subcategory of 
$\mathscr{O}$ that consists of all modules with integral support.
It is a direct summand of $\mathscr{O}$. If a central character
$\chi$ is such that $\mathscr{O}_\chi\cap \mathscr{O}_{\mathrm{int}}$
is non-zero, then $\mathscr{O}_\chi\subset\mathscr{O}_{\mathrm{int}}$
and $\mathscr{O}_\chi$ is an indecomposable block of $\mathscr{O}$.
For such a $\chi$, let  $\lambda$ be the unique dominant weight such that 
$L(\lambda)\in \mathscr{O}_\chi$. Note that $W=W_\lambda$
as $\lambda$ is integral.
Soergel's combinatorial 
description of $\mathscr{O}$, see \cite{So}, determines $\mathscr{O}_\chi$
uniquely, up to equivalence, in terms of the algebra 
of $W'_\lambda$-invariants in the coinvariant algebra for $W$.
Projective endofunctors between different blocks of 
$\mathscr{O}_{\mathrm{int}}$ can then be described in terms of 
induction and restriction functors between the 
coinvariant algebra for $W$ and its corresponding 
invariant subalgebras, see \cite{So3}.

For a (not necessarily integral) weight $\lambda$, the category 
$\mathscr{O}_\lambda$ is equivalent to 
an integral block of $\mathscr{O}$ for a semi-simple complex
Lie algebra corresponding to the Weyl group $W_\lambda$.
Due to Soergel's combinatorial description of projective 
functors mentioned above (see \cite{So3}), this equivalence
is compatible with the action of those projective functors
which are homogeneous of degree $\Xi$.

\subsection{Gelfand-Kirillov dimension}\label{s2.6}

In this subsection we recall basic facts about the Gelfand-Kirillov dimension 
(denoted $\mathrm{GKdim}$).
We refer to \cite{KrLe} for all details.

To each finitely generated $\mathfrak{g}$-module $M$ we can associate 
its  Gelfand-Kirillov dimension $\mathrm{GKdim}(M)\in\mathbb{Z}_{\geq 0}$ 
(which is the  degree of the polynomial that describes the growth of $M$) and
its Bernstein number $\mathrm{BN}(M)\in\mathbb{Z}_{>0}$ (which  is the 
coefficient of the leading term of that polynomial).  Contrary to the 
Gelfand-Kirillov dimension, the Bernstein number might depend on the 
choice of generators in $U(\mathfrak{g})$, so we fix such a choice for 
the remainder of the paper.

The algebra $U(\mathfrak{g})$ is a noetherian algebra of finite 
Gelfand-Kirillov dimension, namely 
$\mathrm{GKdim}(U(\mathfrak{g}))=\dim(\mathfrak{g})$. 
Therefore every simple $\mathfrak{g}$ module 
has Gelfand-Kirillov dimension at most $\dim(\mathfrak{g})$
(in reality, at most $\dim(\mathfrak{g})-\mathrm{rank}(\mathfrak{g})-1$
as $Z(\mathfrak{g})$ is a polynomial algebra in $\mathrm{rank}(\mathfrak{g})$
variables). If $\mathcal{I}$ is a primitive ideal of $U(\mathfrak{g})$
and $L$ a simple module with annihilator $\mathcal{I}$, which 
minimizes the Gelfand-Kirillov dimension in the class of all simple
$\mathfrak{g}$-modules with annihilator $\mathcal{I}$, then $L$
is called {\em holonomic}. For example, all simple modules in
category $\mathscr{O}$ are holonomic, see \cite[Subsection~10.9]{Ja}.
Non-holonomic modules do certainly exist, see \cite{Sta,Co}.
As a matter of fact, almost all (in some sense) simple modules are non-holonomic.

For a finite dimensional $\mathfrak{g}$-module $V$, we have
\begin{displaymath}
\mathrm{GKdim}(V)=0,\qquad  \mathrm{GKdim}(V\otimes_{\mathbb{C}}M)=\mathrm{GKdim}(M)
\end{displaymath}
and $\mathrm{BN}(V\otimes_{\mathbb{C}}M)=\dim(V)\cdot \mathrm{BN}(M)$.
If $0\to X\to Y\to Z\to 0 $ is a short exact sequence, then 
$\mathrm{GKdim}(Y)=\max\{\mathrm{GKdim}(X),\mathrm{GKdim}(Z)\}$.
Moreover, $\mathrm{BN}(Y)=\mathrm{BN}(X)+\mathrm{BN}(Z)$
provided that $\mathrm{GKdim}(X)=\mathrm{GKdim}(Z)$. If the 
latter condition is not satisfied, then $\mathrm{BN}(Y)$
coincides with $\mathrm{BN}(X)$ if $\mathrm{GKdim}(X)>\mathrm{GKdim}(Z)$
and $\mathrm{BN}(Y)$
coincides with $\mathrm{BN}(Z)$ if $\mathrm{GKdim}(X)<\mathrm{GKdim}(Z)$.

\begin{lemma}\label{lem-n4}
Let $L$ and $L'$ be two simple $\mathfrak{g}$-modules such that 
$\mathrm{Hom}_{\mathfrak{g}}(V\otimes_{\mathbb{C}}L,L')\neq0$
or $\mathrm{Hom}_{\mathfrak{g}}(L',V\otimes_{\mathbb{C}}L)\neq0$,
for some finite dimensional $\mathfrak{g}$-module $V$.
Then $\mathrm{GKdim}(L)=\mathrm{GKdim}(L')$.
\end{lemma}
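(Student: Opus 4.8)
The plan is to reduce to a statement about Gelfand-Kirillov dimensions being monotone under the relevant Hom-pairings, and to exploit that $V \otimes_{\mathbb{C}} {}_-$ is biadjoint to $V^* \otimes_{\mathbb{C}} {}_-$. Suppose first that $\mathrm{Hom}_{\mathfrak{g}}(V\otimes_{\mathbb{C}}L,L')\neq 0$. Since $L'$ is simple, any nonzero morphism $V\otimes_{\mathbb{C}}L\to L'$ is surjective, so $L'$ is a simple quotient of $V\otimes_{\mathbb{C}}L$. From the properties of Gelfand-Kirillov dimension recalled above (GKdim is additive over short exact sequences in the sense that $\mathrm{GKdim}(Y)=\max\{\mathrm{GKdim}(X),\mathrm{GKdim}(Z)\}$, and $\mathrm{GKdim}(V\otimes_{\mathbb{C}}L)=\mathrm{GKdim}(L)$), we conclude immediately that $\mathrm{GKdim}(L')\leq \mathrm{GKdim}(V\otimes_{\mathbb{C}}L)=\mathrm{GKdim}(L)$.

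The reverse inequality is obtained by playing the same game with the biadjoint functor. By the biadjunction stated in Subsection~\ref{s2.3}, $\mathrm{Hom}_{\mathfrak{g}}(V\otimes_{\mathbb{C}}L,L')\cong \mathrm{Hom}_{\mathfrak{g}}(L,V^*\otimes_{\mathbb{C}}L')$, and the latter being nonzero means that the simple module $L$ embeds into $V^*\otimes_{\mathbb{C}}L'$ (using simplicity of $L$ again, a nonzero morphism out of $L$ is injective). Hence $\mathrm{GKdim}(L)\leq \mathrm{GKdim}(V^*\otimes_{\mathbb{C}}L')=\mathrm{GKdim}(L')$. Combining the two inequalities gives $\mathrm{GKdim}(L)=\mathrm{GKdim}(L')$. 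The second hypothesis, $\mathrm{Hom}_{\mathfrak{g}}(L',V\otimes_{\mathbb{C}}L)\neq 0$, is handled symmetrically: such a morphism realizes $L'$ as a submodule of $V\otimes_{\mathbb{C}}L$, giving $\mathrm{GKdim}(L')\leq \mathrm{GKdim}(L)$ directly, while adjunction $\mathrm{Hom}_{\mathfrak{g}}(L',V\otimes_{\mathbb{C}}L)\cong \mathrm{Hom}_{\mathfrak{g}}(V^*\otimes_{\mathbb{C}}L',L)$ realizes $L$ as a quotient of $V^*\otimes_{\mathbb{C}}L'$, giving the opposite inequality.

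I do not anticipate a serious obstacle here; the statement is essentially a bookkeeping consequence of two facts already assembled in the preliminaries, namely the biadjointness of $V\otimes_{\mathbb{C}}{}_-$ and $V^*\otimes_{\mathbb{C}}{}_-$, and the behaviour of $\mathrm{GKdim}$ under tensoring with finite dimensional modules and under short exact sequences. The only point requiring a modicum of care is the routine observation that, because $L$ and $L'$ are simple, a nonzero homomorphism with simple source is injective and one with simple target is surjective, so that ``$\mathrm{Hom}\neq 0$'' really does translate into a sub/quotient relation to which the GKdim estimates apply. Everything else is formal.
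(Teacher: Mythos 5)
Your proof is correct and follows essentially the same route as the paper's: bound $\mathrm{GKdim}(L')$ by $\mathrm{GKdim}(L)$ from the given Hom-space, then use the biadjunction with $V^*\otimes_{\mathbb{C}}{}_-$ to get the reverse inequality. You merely spell out the sub/quotient observations that the paper leaves implicit, and treat the second hypothesis explicitly where the paper only proves the first claim.
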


\begin{proof}
We prove the first claim. From
$\mathrm{Hom}_{\mathfrak{g}}(V\otimes_{\mathbb{C}}L,L')\neq0$, we have
$\mathrm{GKdim}(L')\leq \mathrm{GKdim}(L)$.
At the same time, by adjunction, we have
$\mathrm{Hom}_{\mathfrak{g}}(L,V^*\otimes_{\mathbb{C}}L')\neq0$.
Hence $\mathrm{GKdim}(L)\leq \mathrm{GKdim}(L')$ and thus
$\mathrm{GKdim}(L)=\mathrm{GKdim}(L')$. 
\end{proof}

Due to the 
additivity of the Bernstein number, 
$V\otimes_{\mathbb{C}}L$ has a maximal semi-simple
quotient and this quotient has finite length (and is always non-zero).
This maximal semi-simple quotient is usually called the {\em top}
of $V\otimes_{\mathbb{C}}L$. It further follows
that the module $V\otimes_{\mathbb{C}}L$ has a maximal semi-simple
submodule and this submodule has finite length (but, potentially,
may be zero). Theorem~\ref{thm1}, in fact, shows that this 
maximal semi-simple submodule is essential and hence 
is the {\em socle} of $V\otimes_{\mathbb{C}}L$. 
Note that, in general, $V\otimes_{\mathbb{C}}L$ is not of finite length,
see \cite{Sta}. However, we will show in Subsection~\ref{s9.3} that 
$V\otimes_{\mathbb{C}}L$ has
finite {\em rough length} in the sense of \cite{MS08}.

\subsection{Kazhdan-Lusztig combinatorics}\label{s2.65}

To each pair $(W',S')$, where $W'$ is a Weyl group and
$S'$ a fixed set of simple reflections in $W'$, we have the 
associated Hecke  algebra $\mathbb{H}=\mathbb{H}(W',S')$,
which  is an algebra over $\mathbb{Z}[v,v^{-1}]$,
defined by substituting the  relation $(s-e)(s+e)=0$, for $s\in S'$,
in the Coxeter presentation of $W'$, by the relation
$(H_s+v)(H_s-v^{-1})=0$ and keeping the braid relations, 
see e.g. \cite{So4}. 
It has the standard basis $\{H_w\,:\, w\in W\}$
and the Kazhdan-Lusztig basis $\{\underline{H}_w\,:\, w\in W\}$.

For $x,y\in W'$ we set $x\geq_L y$ provided that 
there is $z\in W'$ such that $\underline{H}_x$ appears with
a non-zero coefficient in $\underline{H}_z\underline{H}_y$. 
This defines a pre-order on $W'$ called the {\em KL-left pre-order}.
Equivalence classes with respect to it are called {\em KL-left cells}.
The {\em KL-right pre-order} $\geq_R$ and the corresponding 
{\em KL-right cells} are defined similarly using multiplication 
on the right. The {\em KL-two-sided pre-order} $\geq_J$ and the 
corresponding  {\em KL-two-sided cells} are defined similarly using 
multiplication  on both sides. 

If $\lambda\in\mathfrak{h}^*$ is regular 
and dominant, then the results of \cite{BV1,BV2}
imply that sending $w\in W_\lambda$ to the annihilator 
of $L(w\cdot\lambda)$ gives rise to a bijection between 
the KL-left cells in $W_\lambda$ and the primitive ideals in
$U(\mathfrak{g})$ containing $\mathrm{Ker}(\chi_{{}_\lambda})$.

The function that assigns to $w\in W_\lambda$ the Gelfand-Kirillov
dimension of $L(w\cdot\lambda)$ is constant on KL-two-sided cell.
Indeed, that this function is constant on KL-left cells follows from
\cite[Satz~10.9]{Ja} combined with the fact mentioned above that
annihilators of simple modules are constant on KL-left cells.
That the function is constant on KL-right cells follows from
Lemma~\ref{lem-n4} combined with the fact that simple modules
inside the same KL-right cell can be obtained from each other 
by applying projective functors and taking subquotients.

If $W''$ is a parabolic subgroup of $W_\lambda$, 
$w''_0$ the longest element in $W''$ and
$w^\lambda_0$ the longest element in $W_\lambda$,
then $\mathrm{GKdim}(L(w''_0w^\lambda_0\cdot \lambda))$
can be computed by a very easy formula in 
\cite[Lemma~9.15(a)]{Ja}. Namely, 
$\mathrm{GKdim}(L(w''_0w^\lambda_0\cdot \lambda))$ equals the
number of positive roots for $W$ (note: it is really 
$W$ and not $W_\lambda$) minus the number of 
positive roots for $W''$. In fact, if $W_\lambda$ is of type
$A$, then any KL-two-sided cell of $W_\lambda$ contains some
element of the form $w''_0w^\lambda_0$ and hence the
above applies. For a singular weight $\mu$, the Gelfand-Kirillov
dimension of the corresponding simple highest weight module
equals the  Gelfand-Kirillov
dimension of the simple highest weight module
for a regular correspondent of $\mu$.

\section{Structure of $\theta L$}\label{s3}

\subsection{Quick recap of birepresentation theory}\label{s3.3}

Let $\mathscr{C}$ be a finitary bicategory with
involution and adjunctions (fiab bicategory), see \cite{MMMTZ}.
Recall that a finitary birepresentation $\mathbf{M}$ of $\mathscr{C}$
is called {\em transitive} provided that it is generated,
as a birepresentation of $\mathscr{C}$, by any non-zero object.
Furthermore, $\mathbf{M}$ is called {\em simple transitive}
provided that it does not have any proper non-trivial $\mathscr{C}$-invariant
ideals.

Recall the cell theory for bicategories, see \cite{MM1,MMMTZ}. 
For two indecomposable $1$-morphisms $\theta,\theta'$ in $\mathscr{C}$,
we write $\theta\geq_L\theta'$ provided that there is $\theta''$ in 
$\mathscr{P}$ such that $\theta$ is isomorphic to a summand of 
$\theta''\circ \theta'$. Equivalence classes with respect to 
the pre-order $\geq_L$ are called {\em left cells}. Right and
two-sided cells are defined similarly. 
This is similar to the combinatorics of KL-cells which 
was recalled in Subsection~\ref{s2.65}.
Each transitive birepresentation has an {\em apex}, that is the
maximum two-sided cell whose $1$-morphisms do not annihilate
this birepresentation, see \cite[Subsection~3.2]{ChM}.
For two simple transitive birepresentations $\mathbf{M}$
and $\mathbf{N}$ of $\mathscr{C}$ we denote by 
$\mathrm{Dext}(\mathbf{M},\mathbf{N})$ the set of 
{\em discrete extensions} from $\mathbf{M}$ to $\mathbf{N}$,
see \cite[Subsection~5.2]{ChM}. The set 
$\mathrm{Dext}(\mathbf{M},\mathbf{N})$ is defined as 
the set of all non-empty subsets $\Theta$ of the set 
of  isomorphism classes of indecomposable 1-morphisms 
in $\mathscr{C}$, for which there exists a short exact sequence
\begin{displaymath}
0\to\tilde{\mathbf{N}}\to\mathbf{K}\to\tilde{\mathbf{M}}\to0 
\end{displaymath}
of birepresentations of $\mathscr{C}$ (in the sense of 
\cite[Subsection~5.2]{ChM}) such that 
\begin{itemize}
\item $\tilde{\mathbf{N}}$ is transitive with simple 
transitive quotient $\mathbf{N}$;
\item $\tilde{\mathbf{M}}$ is transitive with simple 
transitive quotient $\mathbf{M}$;
\item the set $\Theta$ consists of all
$1$-morphisms $\mathrm{F}\in\mathscr{C}$, for which there is a 
non-zero object $X\in \tilde{\mathbf{M}}$ such that 
$\mathrm{F}X$ has a non-zero summand from $\tilde{\mathbf{N}}$.
\end{itemize}
More generally, discrete extensions between transitive 
representations are defined as discrete extensions between 
the corresponding simple transitive  quotients.

Note that, in the above definition,  the birepresentation
$\mathbf{K}$ has exactly two weak Jordan-H{\"o}lder constituents.
The fact that $\mathrm{Dext}(\mathbf{M},\mathbf{N})=\varnothing$
means that, in any $\mathbf{K}$ as above, the additive closure of
all indecomposable objects which are not killed by 
projecting onto $\tilde{\mathbf{M}}$
is invariant under the action of $\mathscr{C}$. In particular,
for any $1$-morphism $\mathrm{F}\in\mathscr{C}$, 
the action of the Grothendieck class $[F]$ on the 
the split Grothendieck group of $\mathbf{K}$ is given by a block 
diagonal matrix with two blocks, one corresponding to the action on
the split Grothendieck group of $\tilde{\mathbf{N}}$  and  the other 
one corresponding to the action on the split Grothendieck group 
of $\tilde{\mathbf{M}}$.

\begin{lemma}\label{lem-8-1}
Let $\mathscr{C}$ be a fiab bicategory and $\mathbf{M}$
and $\mathbf{N}$ two transitive birepresentations of 
$\mathscr{C}$ with the same apex $\mathcal{J}$.
Then $\mathrm{Dext}(\mathbf{M},\mathbf{N})=\varnothing$.
\end{lemma}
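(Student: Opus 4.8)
The plan is to argue by contradiction using the structural consequences of a non-empty $\mathrm{Dext}$ set that were spelled out just before the statement. Suppose $\Theta\in\mathrm{Dext}(\mathbf{M},\mathbf{N})$, so there is a short exact sequence $0\to\tilde{\mathbf{N}}\to\mathbf{K}\to\tilde{\mathbf{M}}\to 0$ of birepresentations with the listed properties, and in particular $\Theta\neq\varnothing$. Pick $\mathrm{F}\in\Theta$; by definition there is a non-zero object $X\in\tilde{\mathbf{M}}$ (which we may take indecomposable, and by transitivity of $\tilde{\mathbf{M}}$ we may arrange it to lie in the apex $\mathcal{J}$, i.e. $X\cong\mathrm{G}Y$ for some $\mathrm{G}$ in $\mathcal{J}$) such that $\mathrm{F}X$ has a non-zero summand $Z$ belonging to $\tilde{\mathbf{N}}$. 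The first step is to record that, since $Z$ is a non-zero object in the transitive birepresentation $\tilde{\mathbf{N}}$ whose apex is also $\mathcal{J}$, the class $[Z]$ is a non-zero vector supported on the $\mathcal{J}$-part of the split Grothendieck group of $\tilde{\mathbf{N}}$; similarly $[X]$ is non-zero and $\mathcal{J}$-supported in the split Grothendieck group of $\tilde{\mathbf{M}}$.

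The second and decisive step is to combine this with the block-diagonality already observed: if $\mathrm{Dext}(\mathbf{M},\mathbf{N})=\varnothing$ \emph{failed only because of some other $\Theta$}, we could still have $\mathrm{F}X$ with a summand in $\tilde{\mathbf{N}}$, so what we actually need is to derive a contradiction directly from the existence of \emph{any} such $\mathbf{K}$, $X$, $Z$. Here is the mechanism I would use. Because $\mathbf{M}$ and $\mathbf{N}$ have the \emph{same} apex $\mathcal{J}$, and because $X$ lies in $\mathcal{J}$, acting on $X$ by the $1$-morphisms in $\mathcal{J}$ generates (inside $\tilde{\mathbf{M}}$, up to the radical) a full copy of the $\mathcal{J}$-cell combinatorics; dually, acting on $Z\in\tilde{\mathbf{N}}$ one can find some $1$-morphism $\mathrm{F}'$ in $\mathcal{J}$ with $\mathrm{F}'Z$ having a non-zero summand that maps onto a non-zero object of $\tilde{\mathbf{M}}$ again (using transitivity of $\tilde{\mathbf{M}}$ and that $\mathrm{F}'$ lies in the apex). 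Then $\mathrm{F}'\mathrm{F}X$ simultaneously has a summand living in $\tilde{\mathbf{N}}$ (obtained by applying $\mathrm{F}'$ to $Z$, staying inside the $\mathscr{C}$-stable subcategory $\tilde{\mathbf{N}}$) and, after projecting to $\tilde{\mathbf{M}}$, a \emph{non-zero} image. Writing this out on split Grothendieck groups: the matrix of $[\mathrm{F}'][\mathrm{F}]$ has a non-zero entry coupling a $\tilde{\mathbf{M}}$-basis vector (namely $[X]$) to a $\tilde{\mathbf{N}}$-basis vector, which is exactly the off-diagonal coupling that block-diagonality forbids — but block-diagonality of the action on $[\mathbf{K}]$ is \emph{not} assumed; what is true unconditionally is that $[\mathrm{F}']$ and $[\mathrm{F}]$ preserve the subgroup $[\tilde{\mathbf{N}}]\subseteq[\mathbf{K}]$ (since $\tilde{\mathbf{N}}$ is a sub-birepresentation). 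So the correct contradiction is the other way: $\mathrm{F}X$ having a summand $Z\in\tilde{\mathbf{N}}$ is automatic and not yet contradictory; the point must be that one can then produce, using the apex-$\mathcal{J}$ action on $Z$, \emph{all} of $\tilde{\mathbf{N}}$ as summands of $\mathscr{C}$-translates of $X$, hence the $\mathscr{C}$-subbirepresentation of $\mathbf{K}$ generated by $X$ is all of $\mathbf{K}$; but that subbirepresentation also surjects onto $\tilde{\mathbf{M}}$ with $X\neq 0$, so $\mathbf{K}$ is transitive — contradicting that $\mathbf{K}$ has two weak Jordan--Hölder constituents.

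Let me state the plan more cleanly. \emph{Step 1:} reduce to $X$ indecomposable in the apex $\mathcal{J}$ of $\tilde{\mathbf{M}}$. \emph{Step 2:} since $\mathrm{F}X$ has a non-zero summand $Z$ in $\tilde{\mathbf{N}}$, and $\tilde{\mathbf{N}}$ has apex $\mathcal{J}$ too, the object $Z$ is ``in the apex'' of $\tilde{\mathbf{N}}$ (its class is $\mathcal{J}$-supported); by transitivity of $\tilde{\mathbf{N}}$ — more precisely of its simple transitive quotient $\mathbf{N}$, on which apex-$\mathcal{J}$ morphisms act by matrices with no zero rows or columns on the relevant cell module — the $1$-morphisms in $\mathcal{J}$ applied to $Z$ hit every indecomposable object of $\tilde{\mathbf{N}}$ as a summand, up to adding objects that project to $0$ in $\mathbf{N}$; iterating and using that $\tilde{\mathbf{N}}\to\mathbf{N}$ has transitive (hence again reachable) kernel layers, one gets that the $\mathscr{C}$-subbirepresentation $\langle X\rangle$ of $\mathbf{K}$ contains all of $\tilde{\mathbf{N}}$. \emph{Step 3:} $\langle X\rangle$ maps onto $\tilde{\mathbf{M}}$ under $\mathbf{K}\to\tilde{\mathbf{M}}$ with $X\mapsto$ non-zero, and since $\tilde{\mathbf{M}}$ is transitive, that image is all of $\tilde{\mathbf{M}}$; combined with Step 2, $\langle X\rangle=\mathbf{K}$, so $\mathbf{K}$ is transitive. \emph{Step 4:} a transitive birepresentation has a single weak Jordan--Hölder constituent, contradicting the two-constituent structure of $\mathbf{K}$; hence no such $\mathbf{K}$ exists, i.e. $\mathrm{Dext}(\mathbf{M},\mathbf{N})=\varnothing$.

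The step I expect to be the main obstacle is Step 2: controlling how the apex-$\mathcal{J}$ action spreads a single summand $Z$ across the whole of $\tilde{\mathbf{N}}$, including through the possibly non-split filtration $\tilde{\mathbf{N}}\twoheadrightarrow\mathbf{N}$. The clean way to handle this is to pass to split Grothendieck groups and use that, for a transitive birepresentation with apex $\mathcal{J}$, the matrix of any $[\mathrm{G}]$ with $\mathrm{G}\in\mathcal{J}$ acting on the (indecomposable-object) basis has the property that for every pair of basis vectors $b,b'$ there is some $\mathrm{G}\in\mathcal{J}$ with a non-zero $(b',b)$-entry — this is precisely the transitivity of the cell $2$-representation underlying the apex and is available from \cite{MM1,ChM}. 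Feeding the non-zero $\mathcal{J}$-supported class $[Z]\in[\tilde{\mathbf{N}}]$ into this shows $[\langle X\rangle]\supseteq[\tilde{\mathbf{N}}]$ as submodules of $[\mathbf{K}]$, and then Step 3 upgrades this from Grothendieck classes back to the level of subbirepresentations because $\langle X\rangle$ is a genuine $\mathscr{C}$-stable subcategory containing objects with every relevant class, and $\mathbf{K}$ has finite length. I would also remark that this lemma is essentially the statement that ``$\mathrm{Dext}$ is only interesting between representations with \emph{different} apexes,'' which matches the intuition from \cite[Subsection~5.2]{ChM}, so the proof should be read as a bicategorical analogue of the elementary fact that a two-step filtration with isomorphic-apex layers must already be a direct sum, hence cannot be a non-split (``discrete'') extension.
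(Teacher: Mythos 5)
Your argument breaks down at the passage from Step 3 to Step 4. What Steps 1--3 establish (correctly, modulo details) is that the $\mathscr{C}$-subbirepresentation $\langle X\rangle$ generated by the single object $X$ is all of $\mathbf{K}$. But ``transitive'' means that \emph{every} non-zero object generates, and any non-zero object of $\tilde{\mathbf{N}}$ manifestly generates only a subcategory of the proper subbirepresentation $\tilde{\mathbf{N}}$; so $\langle X\rangle=\mathbf{K}$ does not make $\mathbf{K}$ transitive and yields no contradiction with the two-constituent structure. Note also that your argument never uses the hypothesis that $\mathbf{M}$ and $\mathbf{N}$ have the \emph{same} apex in an essential way (transitivity of $\tilde{\mathbf{N}}$ alone already spreads $Z$ over all of $\tilde{\mathbf{N}}$ in Step 2), so if the argument worked it would prove $\mathrm{Dext}(\mathbf{M},\mathbf{N})=\varnothing$ unconditionally --- which is false, since non-trivial discrete extensions between transitive birepresentations with different apexes do exist; that is the whole point of the notion in \cite[Subsection~5.2]{ChM}. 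You half-recognize this circularity mid-proposal when you concede that block-diagonality of the action on $[\mathbf{K}]$ is the conclusion rather than a hypothesis, but the generation argument you retreat to does not repair it.

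The missing ingredient is a positivity argument. The paper's proof takes the idempotent $e$ in the real algebra $A_{\mathcal{J}}$ of \cite[Subsection~9.3]{KM} (whose existence is \cite[Proposition~18]{KM}); the matrix of $e$ on the real split Grothendieck group of any transitive birepresentation with apex $\mathcal{J}$ has strictly positive entries. On the split Grothendieck group of $\mathbf{K}$ this matrix is block upper triangular, $\left(\begin{smallmatrix}A&B\\0&C\end{smallmatrix}\right)$, with $A$ and $C$ positive idempotent blocks and $B$ non-negative; idempotency gives $AB+BC=B$, hence $ABC=0$ after multiplying by $A$ on the left, and positivity of $A$ and $C$ forces $B=0$, i.e.\ $\Theta$ is empty for every such $\mathbf{K}$. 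It is exactly here that the same-apex hypothesis enters: it guarantees that \emph{both} diagonal blocks are positive. Any correct proof needs some substitute for this positivity (for instance the normal form for non-negative idempotent matrices from \cite{Fl}, which is the paper's first variant of the argument), not a generation argument.
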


\begin{proof}
We use the idea in the proofs of \cite[Corollary~20]{KM} and 
\cite[Corollary~14]{ChM}. Let $e$ be the idempotent in the
real algebra $A_\mathcal{J}$ from \cite[Subsection~9.3]{KM}, 
whose existence was proved in  \cite[Proposition~18]{KM}.
Let 
\begin{displaymath}
0\to  \mathbf{N}\to  \mathbf{K}\to  \mathbf{M}\to 0
\end{displaymath}
be a short exact sequence of birepresentations,
see \cite[Subsection~5.2]{ChM}. 

The algebra $A_\mathcal{J}$ acts on the split Grothendieck group
of $\mathbf{K}$ with coefficients in $\mathbb{R}$. The matrix of $e$,
written in the basis of indecomposable objects in
$\mathbf{N}$ and $\mathbf{M}$ has the form
\begin{equation}\label{eq57}
\left(\begin{array}{cc}A&B\\0&C\end{array}\right), 
\end{equation}
where $A$ and $C$ are real idempotent matrices with
positive entries and $B$ has non-negative real entries.

Recall that \cite[Formula~(2)]{Fl}
provides the following normal form for idempotent matrices 
with non-negative coefficients:
\begin{displaymath}
\left(\begin{array}{cccc}J&JX&0&0\\0&0&0&0\\YJ&YJX&0&0\\0&0&0&0\end{array}\right),
\end{displaymath}
where $J$ is a block diagonal matrix with diagonal blocks $J_1,J_2,\dots,J_k$,
with each $J_i$ being an idempotent matrix of rank one with non-negative
coefficients. In this normal form, any non-zero off-diagonal entry
for which  both diagonal correspondents are non-zero belongs
to one of the blocks $J_i$. If we assume that $B$ has a non-zero 
entry, we thus obtain that the whole matrix \eqref{eq57} must be
one block $J_i$, which  contradicts the fact that $J_i$
has rank one. Therefore $B=0$.

Alternatively, one can note that 
$\left(\begin{array}{cc}A&B\\0&C\end{array}\right)^2=\left(\begin{array}{cc}A&B\\0&C\end{array}\right)$
is equivalent to $A^2=A$, $C^2=C$ and $AB+BC=B$. Multiplying the last equation by $A$ 
on the left, gives $AAB + ABC = AB$, which implies that
$ABC=0$, since $A^2=A$. As all entries in both $A$ and $C$ are positive
and in $B$ are non-negative,
the equality $ABC=0$ is equivalent to $B=0$.
\end{proof}

\begin{corollary}\label{cor-8-2}
Let $\mathscr{C}$ be a fiab bicategory and  $\mathbf{M}$
a finitary birepresentation of $\mathscr{C}$ such that
all transitive subquotients of $\mathbf{M}$ have the same apex.
Then the objects of each transitive subquotient of $\mathbf{M}$
form a subbirepresentation.
\end{corollary}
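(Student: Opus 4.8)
The plan is to derive Corollary~\ref{cor-8-2} from Lemma~\ref{lem-8-1} by an induction on the number of weak Jordan–Hölder constituents of $\mathbf{M}$, reducing the general statement to the two-constituent case, which is exactly what Lemma~\ref{lem-8-1} controls. First I would recall that a finitary birepresentation $\mathbf{M}$ admits a weak Jordan–Hölder filtration whose subquotients are transitive birepresentations, and that the hypothesis guarantees each of these has the same apex $\mathcal{J}$. The statement to be proved is that for each transitive subquotient $\mathbf{M}_i$ of $\mathbf{M}$, the additive closure $\mathbf{K}_i$ of the indecomposable objects of $\mathbf{M}$ that survive the projection to $\mathbf{M}_i$ is itself a subbirepresentation, i.e.\ is invariant under the action of every $1$-morphism of $\mathscr{C}$.

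The key step is the two-constituent case. Suppose $\mathbf{M}$ has a short exact sequence $0\to\mathbf{N}\to\mathbf{M}\to\mathbf{Q}\to 0$ with $\mathbf{N}$ and $\mathbf{Q}$ transitive of the same apex $\mathcal{J}$. By Lemma~\ref{lem-8-1}, $\mathrm{Dext}(\mathbf{Q},\mathbf{N})=\varnothing$. Unwinding the definition of $\mathrm{Dext}$ recalled just before Lemma~\ref{lem-8-1}: emptiness means precisely that the additive closure of all indecomposable objects of $\mathbf{M}$ which are not killed by projecting onto $\mathbf{Q}$ is invariant under the action of $\mathscr{C}$. That closure is a subbirepresentation whose image in $\mathbf{Q}$ is all of $\mathbf{Q}$ and whose intersection with $\mathbf{N}$ is $0$ at the level of objects, so it is a copy of $\mathbf{Q}$ inside $\mathbf{M}$ as a subbirepresentation; dually (or by the same argument run on the split Grothendieck group with the block-diagonality statement following Lemma~\ref{lem-8-1}) the objects of $\mathbf{N}$ also form a subbirepresentation. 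So in the two-step case both transitive subquotients are realised on the nose as subbirepresentations.

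For the general case I would induct on the length of the weak Jordan–Hölder filtration. Given $\mathbf{M}$ with filtration $0=\mathbf{M}^{(0)}\subset \mathbf{M}^{(1)}\subset\cdots\subset\mathbf{M}^{(n)}=\mathbf{M}$ and transitive subquotients $\mathbf{M}_i=\mathbf{M}^{(i)}/\mathbf{M}^{(i-1)}$, fix $i$ and apply the two-constituent case to the two birepresentations $\mathbf{M}^{(i)}/\mathbf{M}^{(i-1)}$ (which is $\mathbf{M}_i$) and its complement pieces: more carefully, consider the quotient $\mathbf{M}/\mathbf{M}^{(i-1)}$ and inside it the subbirepresentation $\mathbf{M}_i=\mathbf{M}^{(i)}/\mathbf{M}^{(i-1)}$; by the two-step argument (applied to $0\to\mathbf{M}_i\to\mathbf{M}/\mathbf{M}^{(i-1)}\to\mathbf{M}/\mathbf{M}^{(i)}\to 0$, noting all constituents share the apex $\mathcal{J}$) the objects of $\mathbf{M}_i$ form a subbirepresentation of $\mathbf{M}/\mathbf{M}^{(i-1)}$, and by the dual two-step argument applied to $0\to \mathbf{M}^{(i-1)}\to\mathbf{M}^{(i)}\to\mathbf{M}_i\to 0$ they also form a subbirepresentation of a quotient of $\mathbf{M}$; combining, the additive closure of the indecomposables of $\mathbf{M}$ mapping isomorphically onto $\mathbf{M}_i$ is $\mathscr{C}$-invariant. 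I expect the main obstacle to be purely bookkeeping: matching the two one-sided statements coming out of $\mathrm{Dext}=\varnothing$ (survival under projection to $\mathbf{Q}$ versus lying in $\mathbf{N}$) consistently across all $n$ steps of the filtration, and checking that the additive closures so obtained are mutually compatible — i.e.\ that an indecomposable object's ``label'' $i$ is well-defined independent of which adjacent pair of the filtration one uses. The positivity/block-diagonality of the idempotent $e$ from \cite[Subsection~9.3]{KM} on the split Grothendieck group of $\mathbf{M}$, already used in Lemma~\ref{lem-8-1}, can be invoked once for the full filtration to see that $[F]$ acts block-diagonally with respect to the decomposition indexed by the $\mathbf{M}_i$, which settles this well-definedness and hence the corollary.
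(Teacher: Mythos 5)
Your two-constituent case is correct and matches the paper's own reading of Lemma~\ref{lem-8-1}: for a short exact sequence $0\to\mathbf{N}\to\mathbf{M}\to\mathbf{Q}\to 0$ with both outer terms transitive of apex $\mathcal{J}$, the emptiness of $\mathrm{Dext}(\mathbf{Q},\mathbf{N})$ says exactly that the additive closure of the indecomposables surviving the projection to $\mathbf{Q}$ is $\mathscr{C}$-invariant. The gap is in the passage to the general case. The short exact sequences you propose to feed into the two-constituent case, namely $0\to\mathbf{M}_i\to\mathbf{M}/\mathbf{M}^{(i-1)}\to\mathbf{M}/\mathbf{M}^{(i)}\to 0$ and $0\to\mathbf{M}^{(i-1)}\to\mathbf{M}^{(i)}\to\mathbf{M}_i\to 0$, do not satisfy its hypotheses: one of the outer terms fails to be transitive as soon as the filtration has length greater than two, so Lemma~\ref{lem-8-1} does not apply to them as stated. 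Moreover, the conclusion you extract from the first sequence (that the objects of $\mathbf{M}_i$ form a subbirepresentation of $\mathbf{M}/\mathbf{M}^{(i-1)}$) already holds by definition of the filtration and does not address the actual point, which is whether applying a $1$-morphism to an indecomposable of $\mathbf{M}$ lying in $\mathbf{M}^{(i)}$ but not in $\mathbf{M}^{(i-1)}$ can produce summands inside $\mathbf{M}^{(i-1)}$. Your fallback --- invoking the idempotent $e$ once for the whole filtration to get block-diagonality of every $[\mathrm{F}]$ --- is the germ of a valid alternative but is asserted rather than proved: you would need (i) to extend the normal-form (or the $ABC=0$) computation from two diagonal blocks to $n$ of them, and (ii) a separate argument for $1$-morphisms $\mathrm{F}$ outside $\mathcal{J}$, since $e$ only controls the classes of $1$-morphisms in the apex.

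The paper closes exactly this gap by a different device. It peels off only the top transitive quotient $\mathbf{N}$, handles the kernel $\mathbf{K}$ by induction, and then argues by contradiction: if the additive closure $\mathbf{N}'$ of the indecomposables surviving the projection to $\mathbf{N}$ were not $\mathscr{C}$-stable, one forms the additive closure $\mathbf{M}'$ of $\mathscr{C}\mathbf{N}'$, lets $\mathbf{K}'$ be the part of $\mathbf{M}'$ outside $\mathbf{N}'$, and passes to a transitive quotient $\mathbf{K}''$ of $\mathbf{K}'$ with kernel $\mathbf{I}$; the resulting sequence $0\to\mathbf{K}''\to\mathbf{M}'/(\mathbf{I})\to\mathbf{N}'/(\mathbf{I})\to 0$ is a genuine two-term extension of (quotients of) transitive birepresentations with apex $\mathcal{J}$, witnessing a non-empty discrete extension and contradicting Lemma~\ref{lem-8-1}. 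This trick of quotienting by $\mathbf{I}$ to manufacture a situation where the lemma literally applies is the idea missing from your proposal; with it, or with a fully worked-out version of your block-diagonality claim, the argument would go through.
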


\begin{proof}
By the weak Jordan-H{\"o}lder Theorem, see \cite[Theorem~8]{MM16}, there 
is a short exact sequence of birepresentations
\begin{displaymath}
0\to  \mathbf{K}\to  \mathbf{M}\to  \mathbf{N}\to 0,
\end{displaymath}
such that $\mathbf{N}$ is transitive and the number of transitive subquotients 
of $\mathbf{K}$ is one less than the number of transitive subquotients of $\mathbf{M}$.
By induction, the transitive subquotients of $\mathbf{K}$  are all subbirepresentations. 
We need to prove that the additive closure $\mathbf{N}'$ in $\mathbf{M}$ of all 
indecomposables whose image in $\mathbf{N}$ is non-zero is a subbirepresentation.

Assume that this is not the case. Consider the additive closure $\mathbf{M}'$ of
$\mathscr{C}\mathbf{N}'$ in $\mathbf{M}$. By our assumption,
$\mathbf{M}'\neq \mathbf{N}'$. Let $\mathbf{K}'$ be the additive closure in
$\mathbf{M}'$ of all indecomposable objects outside $\mathbf{N}'$.
Let $\mathbf{K}''$ be some transitive quotient of $\mathbf{K}'$ and let
$\mathbf{I}$ be the corresponding kernel. This gives rise to a short
exact sequence of birepresentations
\begin{displaymath}
0\to  \mathbf{K}''\to  \mathbf{M}'/(\mathbf{I})\to  \mathbf{N}'/(\mathbf{I})\to 0.
\end{displaymath}
By construction, this gives a non-trivial discrete extension between the simple
transitive quotients  of 
$\mathbf{N}'/(\mathbf{I})$ and $\mathbf{K}''$, which contradicts Lemma~\ref{lem-8-1}.
The claim follows.
\end{proof}

\subsection{The bicategory of projective functors}\label{s3.1}

Denote by $\mathscr{P}$ the locally finitary 
(in the sense  of \cite{Mac1,Mac2}) bicategory defined as
follows:
\begin{itemize}
\item the objects of $\mathscr{P}$ are  $\mathtt{i}_\chi$,
where $\chi$ is a central character of $U(\mathfrak{g})$;
\item $1$-morphisms in $\mathscr{P}(\mathtt{i}_\chi,\mathtt{i}_{\chi'})$
are all projective functors which restrict to functors from 
$\mathscr{O}_\chi$ to $\mathscr{O}_{\chi'}$;
\item $2$-morphisms in $\mathscr{P}(\mathtt{i}_\chi,\mathtt{i}_{\chi'})$
are natural transformations of functors,
\end{itemize}
where all identities and compositions are defined in the obvious way.

Since projective functors can be viewed as Harish-Chandra bimodules,
the bicategory $\mathscr{P}$ inherits from $\mathscr{H}$ a
$\Lambda/\Xi$-grading. For $\xi\in \Lambda/\Xi$, we denote by
$\mathscr{P}^\xi$ the corresponding homogeneous component.
In particular, $\mathscr{P}^\Xi$ is a subbicategory of
$\mathscr{P}$. 

Now let $L$ be a simple $\mathfrak{g}$-module with central character $\chi$.
For a central character $\chi'$, denote by $\mathbf{X}_{\chi'}^L$ the additive closure
in $\mathfrak{g}$-mod of all objects of the form $\theta L$, where
$\theta\in \mathscr{P}(\mathtt{i}_\chi,\mathtt{i}_{\chi'})$. Then
the collection of all these $\mathbf{X}_{\chi'}^L$ carries a natural action of
$\mathscr{P}$. In other words, we get  a birepresentation of 
$\mathscr{P}$, which we denote by $\mathbf{X}^L$. This birepresentation is locally finitary 
(cf. \cite{Mac1,Mac2}) in the sense
that it has the properties described by the following proposition.

\begin{proposition}\label{prop5}
Each $\mathbf{X}_{\chi'}^L$ is an idempotent split additive category 
with finite dimensional morphism spaces and finitely many 
isomorphism classes of indecomposable objects.
\end{proposition}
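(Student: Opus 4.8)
The plan is to establish the three required properties---idempotent splitting, finite-dimensionality of morphism spaces, and finiteness of the set of isomorphism classes of indecomposables---essentially by reducing everything to facts about Harish-Chandra bimodules and projective functors that were collected in Section~\ref{s2new}. The key observation is that every object of $\mathbf{X}_{\chi'}^L$ is of the form $\theta L$ for a projective functor $\theta\in\mathscr{P}(\mathtt{i}_\chi,\mathtt{i}_{\chi'})$, and that there are only finitely many indecomposable such $\theta$ (by the classification recalled in Subsection~\ref{s2.3}: they are the $\theta_{\lambda,\nu_i}$ with $\nu_i$ ranging over the finite set $(W\cdot\nu)\cap(\lambda+\Lambda)$ that is $W'_\lambda$-anti-dominant). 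So the whole category is built from finitely many ``generating'' objects by taking direct sums and summands, and the real content is to control the endomorphisms.

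First I would address the morphism spaces. For projective functors $\theta,\theta'$ one has a natural isomorphism $\mathrm{Hom}_{\mathfrak g}(\theta L,\theta' L)\cong\mathrm{Hom}_{\mathfrak g}(\theta'^{*}\theta L,L)$ via the biadjunction (Subsection~\ref{s2.3}), where $\theta'^{*}$ is the biadjoint projective functor. Now $\theta'^{*}\theta$ is again a projective functor $\mathscr{M}_\chi\to\mathscr{M}_\chi$, hence a summand of $V\otimes_{\mathbb C}{}_-$ for some finite-dimensional $V$, so it suffices to bound $\mathrm{Hom}_{\mathfrak g}(V\otimes_{\mathbb C}L,L)=\mathrm{Hom}_{\mathfrak g}(L,V^{*}\otimes_{\mathbb C}L)$. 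This is finite-dimensional: indeed, by the discussion in Subsection~\ref{s2.6}, additivity of the Bernstein number forces the multiplicity of $L$ as a subquotient of $V^{*}\otimes_{\mathbb C}L$ to be finite (bounded by $\dim V\cdot\mathrm{BN}(L)/\mathrm{BN}(L)=\dim V$), and a fortiori $\dim\mathrm{Hom}_{\mathfrak g}(L,V^{*}\otimes_{\mathbb C}L)<\infty$. Since morphism spaces between arbitrary objects are subquotients of those between finite direct sums of the generators $\theta_i L$, they are all finite-dimensional.

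Idempotent splitting then follows from a standard argument: a category with finite-dimensional Hom-spaces over $\mathbb C$ in which the ambient abelian category $\mathfrak g$-mod already has split idempotents inherits idempotent splitting once one checks that the image of an idempotent $e\colon\theta L\to\theta L$ inside $\mathfrak g$-mod is again in $\mathbf{X}_{\chi'}^L$; but $\mathrm{im}(e)$ is a direct summand of $\theta L$, and $\mathbf{X}_{\chi'}^L$ is by definition the \emph{additive closure} of the $\theta L$, i.e.\ closed under summands, so this is automatic. Finiteness of the number of indecomposables up to isomorphism is then immediate: every object is a finite direct sum of indecomposable summands of the finitely many $\theta_i L$, and by Krull--Schmidt (valid since endomorphism rings are finite-dimensional $\mathbb C$-algebras, hence semiperfect) each $\theta_i L$ decomposes into finitely many indecomposables, so the total set of indecomposables is finite.

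The main obstacle---though it is a mild one given the preliminaries---is the finite-dimensionality of $\mathrm{Hom}_{\mathfrak g}(V\otimes_{\mathbb C}L,L')$ for simple $L,L'$, which underpins everything; one must be slightly careful that $L$ need not lie in category $\mathscr{O}$ and need not have finite length after tensoring, so the bound genuinely comes from the Bernstein-number bookkeeping in Subsection~\ref{s2.6} (and from Lemma~\ref{lem-n4} to know that only simples $L'$ with $\mathrm{GKdim}(L')=\mathrm{GKdim}(L)$ can contribute as a top, though the cleaner route is the direct multiplicity bound on $V^*\otimes_{\mathbb C}L$). Everything else is formal category-theoretic packaging. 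I would write the proof in the order: (1) reduce to finitely many generators via the classification of indecomposable projective functors; (2) prove Hom-finiteness via biadjunction plus the Bernstein-number multiplicity bound; (3) deduce Krull--Schmidt and idempotent splitting; (4) conclude finiteness of indecomposables.
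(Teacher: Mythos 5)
Your proposal is correct and follows essentially the same route as the paper: the paper's proof consists precisely of the biadjunction isomorphism $\mathrm{Hom}_{\mathfrak{g}}(\theta L,\theta' L)\cong\mathrm{Hom}_{\mathfrak{g}}((\theta')^{*}\theta L,L)$, an appeal to the finite-dimensionality of the right-hand side (which, as you correctly identify, ultimately rests on the Bernstein-number additivity of Subsection~\ref{s2.6}), and the remark that ``the rest follows from the definitions.'' You have merely spelled out the Krull--Schmidt and idempotent-splitting bookkeeping that the paper leaves implicit.
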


\begin{proof}
Let $\theta,\theta'\in  \mathscr{P}(\mathtt{i}_\chi,\mathtt{i}_{\chi'})$.
Then, by adjunction,
\begin{displaymath}
\mathrm{Hom}_{\mathfrak{g}}(\theta L, \theta' L)\cong
\mathrm{Hom}_{\mathfrak{g}}((\theta')^*\theta L, L),
\end{displaymath}
where $(\theta')^*$ is the biadjoint of $\theta'$.
As explained in Subsection~\ref{s2.3}, the right hand side
is finite dimensional. The rest now follows from the definitions.
\end{proof}

For a fixed $\chi'$, we have the bicategory 
$\mathscr{P}_{\chi'}:=\mathscr{P}(\mathtt{i}_{\chi'},\mathtt{i}_{\chi'})$.
This bicategory is finitary in the sense of \cite{MMMTZ} and 
$\mathbf{X}_{\chi'}^L$ is a finitary birepresentation of this bicategory.
For $\chi'=\chi_{{}_0}$, the central character of the trivial
$\mathfrak{g}$-module, the bicategory $\mathscr{P}_{\chi_{{}_0}}$ is biequivalent
to the bicategory of Soergel bimodules over the coinvariant algebra
of $W$, cf. \cite{MMMTZ2}.

Combinatorics of the action  of projective functors 
on category $\mathscr{O}$ is governed
by the Kazhdan-Lusztig basis of the Hecke algebra. Therefore
the cell  structure of the latter, which was recalled in 
Subsection~\ref{s2.65}, is just a special case of the cell structure of 
$\mathscr{P}_{\chi_{{}_0}}$. We also note that the action of projective functors
on category $\mathscr{O}$ is a right action.  With this in mind,
the properties recalled in Subsection~\ref{s2.65} can now be
reformulated in the setup of  projective functors as follows:
The annihilator $\mathrm{Ann}_{U(\mathfrak{g})}(L)$
corresponds to a right cell in $\mathscr{P}$, say $\mathcal{R}$. 
Let $\mathcal{J}$ be the two-sided cell containing $\mathcal{R}$. 
Then, for any indecomposable $\theta$, the inequality $\theta L\neq 0$ 
implies $\theta\leq_J\mathcal{J}$.

For any central character $\chi'$, we denote by $\mathbf{Y}_{\chi'}^L$ the
additive closure of all $\theta L$, where we take
$\theta\in \mathscr{P}(\mathtt{i}_\chi,\mathtt{i}_{\chi'})\cap \mathcal{J}$.
Then $\mathbf{Y}_{\chi'}^L$ is a full subcategory of $\mathbf{X}_{\chi'}^L$,
and the collection of all these $\mathbf{Y}_{\chi'}^L$ is closed under the 
action of $\mathscr{P}$. We denote the corresponding birepresentation of
$\mathscr{P}$ by $\mathbf{Y}^L$.

\begin{conjecture}\label{conj7}
The  birepresentation $\mathbf{Y}^L$ is simple transitive.
\end{conjecture}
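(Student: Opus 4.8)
The plan is to verify the two conditions in the definition of simple transitivity for $\mathbf{Y}^L$: transitivity, and the absence of proper non-trivial $\mathscr{P}$-invariant ideals. Transitivity should be the easier half. Since $\mathbf{Y}^L$ is built from the objects $\theta L$ with $\theta$ ranging over the two-sided cell $\mathcal{J}$ that contains the right cell $\mathcal{R}$ attached to $\mathrm{Ann}_{U(\mathfrak{g})}(L)$, one wants to show that any non-zero object $\theta L \in \mathbf{Y}^L_{\chi'}$ generates everything. Here I would use that, because $L$ is holonomic and $\theta\in\mathcal{J}$, the module $\theta L$ has a subquotient of Gelfand--Kirillov dimension $\mathrm{GKdim}(L)$ — indeed $\mathbf{Y}^L$ is designed so that no apex-dimension is lost — together with the biadjunction $\mathrm{Hom}_{\mathfrak{g}}(\theta L,\theta' L)\cong\mathrm{Hom}_{\mathfrak{g}}((\theta')^*\theta L,L)$ from Proposition~\ref{prop5}. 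Combined with the classification of projective functors on $\mathscr{M}_{\chi'}$ in Subsection~\ref{s2.3} and the cell combinatorics of Subsection~\ref{s2.65}, one should be able to push $\theta L$ up and down within $\mathcal{J}$ to recover any prescribed $\theta'' L$; the key input is that applying biadjoint functors to a non-zero module $M$ with $\mathrm{GKdim}(M)=\mathrm{GKdim}(L)$ and $\mathrm{Ann}_{U(\mathfrak{g})}(M)$ associated to $\mathcal{J}$ cannot drop out of the cell. This is essentially the transitivity argument familiar from the highest-weight case, and the apex of every $\mathbf{Y}^L_{\chi'}$ is $\mathcal{J}$ by construction.

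For the second condition I would argue by contradiction: suppose $\mathbf{I}$ is a proper non-zero $\mathscr{P}$-invariant ideal of $\mathbf{Y}^L$. Passing to $\mathbf{Y}^L/\mathbf{I}$ gives a birepresentation whose transitive subquotients all still have apex $\mathcal{J}$ (nothing smaller can appear, by the cell bound $\theta L\neq 0\Rightarrow\theta\leq_J\mathcal{J}$, and nothing is lost by transitivity of $\mathbf{Y}^L$ itself). Now I would invoke Corollary~\ref{cor-8-2}: since all transitive subquotients of $\mathbf{Y}^L$ (viewed as a finitary birepresentation, after restricting to each finitary $\mathscr{P}_{\chi'}$) share the apex $\mathcal{J}$, the objects of each transitive subquotient already form a subbirepresentation. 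This rigidity — ultimately coming from the Perron--Frobenius/idempotent-matrix analysis of Lemma~\ref{lem-8-1} applied to the action of the real algebra $A_{\mathcal{J}}$ on split Grothendieck groups — forces the ideal structure to split off as a direct sum. One then has to upgrade this additive splitting to the statement that $\mathbf{Y}^L$ is itself transitive as a single block, which is exactly what the first part gave us; hence $\mathbf{I}$ must be $0$ or everything.

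The main obstacle, I expect, is the transitivity step — specifically, controlling what happens to Gelfand--Kirillov dimension and annihilators when one applies a projective functor $\theta\in\mathcal{J}$ to an \emph{arbitrary} holonomic simple $L$ rather than a highest-weight one. In category $\mathscr{O}$ one has exact control via Verma flags and the Kazhdan--Lusztig basis, but here $\theta L$ need not even be of finite length, and one must extract the apex-dimensional part abstractly. The technical heart will be a lemma asserting: if $M$ is a finitely generated $\mathfrak{g}$-module all of whose subquotients of GK-dimension $\mathrm{GKdim}(L)$ have annihilator whose associated two-sided cell is $\mathcal{J}$, and $M\neq 0$ in the relevant Serre subquotient, then for suitable $\theta'$ the module $\theta' M$ surjects onto (or contains) a prescribed $\theta'' L$. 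Proving this cleanly, without the $\mathscr{O}$-toolkit, and reconciling it with the possible presence of \emph{strange} subquotients (Subsection~\ref{s4.4}), is where the real work lies; in type $A$ this is circumvented in Subsection~\ref{s4.3} by transporting the whole question to the highest-weight birepresentation, but in general type the argument must be made intrinsic to $\mathbf{Y}^L$.
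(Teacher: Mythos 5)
There is a genuine gap --- in fact two --- and it is worth stressing first that the statement you are attacking is a \emph{conjecture} which the paper itself only establishes in type $A$ (Subsection~\ref{s4.3}) and, outside type $A$, under the extra hypothesis that $\mathcal{J}$ contains some $w_0^{\mathfrak{p}}$ (Theorem~\ref{thm1-n2}). Your plan does not close either of the two holes that make the general case open. For transitivity, the crux is not that applying $\theta\in\mathcal{J}$ preserves GK-dimension and the annihilator-cell (it does, by Lemma~\ref{lem-n4} and the cell bound); the crux is that if $L'$ is a simple quotient of some $\theta L$, one must get \emph{back} from $L'$ to $L$, i.e.\ one needs the symmetry of $\triangleright$ on $\mathcal{X}_L$, which is exactly Conjecture~\ref{conj7-2}. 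The paper routes transitivity of $\mathbf{Y}^L$ through Theorem~\ref{thm7-11}, whose proof needs a genuinely non-obvious argument (an infinite chain of surjections $\dots\tto Y_2\tto X_2\tto Y_1\tto X_1\tto L$ plus nilpotency of the radical of the local endomorphism ring of an indecomposable, to extract an isomorphism), and whose hypothesis is itself only verified in type $A$ by translating to a singular block where the surviving projective functors become equivalences. ``Pushing $\theta L$ up and down within $\mathcal{J}$'' does not produce a surjection onto a prescribed $\theta''L$ without this input.

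The second gap is more serious: your argument for the absence of proper invariant ideals is circular. Lemma~\ref{lem-8-1} and Corollary~\ref{cor-8-2} only exclude non-trivial \emph{discrete extensions} between distinct transitive constituents with the same apex, i.e.\ they force an additive splitting of the weak Jordan--H{\"o}lder filtration; they say nothing about $\mathscr{P}$-invariant ideals of \emph{morphisms} inside a single transitive birepresentation. Every transitive birepresentation has a unique maximal such ideal, and the quotient by it is the simple transitive one --- so ``transitive, hence $\mathbf{I}=0$ or everything'' is precisely the implication that fails in general and that distinguishes transitive from simple transitive. The paper's actual simplicity argument is substantive: in type $A$ it proves weak Kostant positivity of a translate $L'$ (Lemma~\ref{lem08}), invokes the Mili{\v c}i{\'c}--Soergel/Khomenko--Mazorchuk equivalence to identify ${}^\Xi\mathbf{Y}^{L}$ with ${}^\Xi\mathbf{Y}^{L(\lambda)}$, and then quotes the known simple transitivity of cell $2$-representations; in the setting of Theorem~\ref{thm1-n2} it instead uses that $\mathbf{Y}^L(\mathtt{i}_{\chi'})$ is semisimple at a suitable singular block together with an adjunction argument to see that no socle constituent can be killed. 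Some such external input (an equivalence with a known simple transitive birepresentation, or semisimplicity at a wall) is currently indispensable; your intrinsic Perron--Frobenius rigidity does not supply it.
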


\subsection{A partial pre-order}\label{s3.2}

Consider the set $\mathrm{Irr}(\mathfrak{g})$ of isomorphism
classes of simple $\mathfrak{g}$-modules. For 
$X,Y\in \mathrm{Irr}(\mathfrak{g})$, write 
$X\triangleright Y$ provided that there is a 
finite dimensional $\mathfrak{g}$-module $V$ such that
$V\otimes_{\mathbb{C}} X\tto Y$.  Note that the relation 
$X\triangleright Y$ implies the equality $\mathrm{GKdim}(X)=\mathrm{GKdim}(Y)$,
see Lemma~\ref{lem-n4}.

\begin{lemma}\label{lem7-5}
The relation $\triangleright$ is reflexive and transitive
(and hence is a partial pre-order).
\end{lemma}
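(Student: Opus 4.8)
The plan is to verify reflexivity and transitivity of $\triangleright$ directly from the definition, using the trivial module for reflexivity and the tensor product of two finite dimensional modules for transitivity. The only non-formal ingredient needed is the already-established fact (from Subsection~\ref{s2.6}, via the additivity of the Bernstein number) that $V\otimes_{\mathbb{C}}X$ always has a non-zero maximal semi-simple quotient of finite length; this guarantees that the surjections asserted in the definition of $\triangleright$ actually exist and, in particular, that $X\triangleright X$ is not vacuous.

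For reflexivity, I would take $V=\mathbb{C}$ to be the trivial one-dimensional $\mathfrak{g}$-module. Then $\mathbb{C}\otimes_{\mathbb{C}}X\cong X$ as $\mathfrak{g}$-modules, and since $X$ is simple, the identity map is a surjection $\mathbb{C}\otimes_{\mathbb{C}}X\tto X$. Hence $X\triangleright X$.

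For transitivity, suppose $X\triangleright Y$ and $Y\triangleright Z$, witnessed by finite dimensional $\mathfrak{g}$-modules $V$ and $W$ with surjections $V\otimes_{\mathbb{C}}X\tto Y$ and $W\otimes_{\mathbb{C}}Y\tto Z$. Applying the exact functor $W\otimes_{\mathbb{C}}{}_-$ to the first surjection yields a surjection $W\otimes_{\mathbb{C}}V\otimes_{\mathbb{C}}X\tto W\otimes_{\mathbb{C}}Y$, and composing with the second surjection gives a surjection $(W\otimes_{\mathbb{C}}V)\otimes_{\mathbb{C}}X\tto Z$. Since $W\otimes_{\mathbb{C}}V$ is again a finite dimensional $\mathfrak{g}$-module, this exhibits $X\triangleright Z$.

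There is essentially no obstacle here: the argument is purely formal once one knows that tensoring with a finite dimensional module is exact (recorded in Subsection~\ref{s2.3}) and that $\triangleright$ is non-vacuous (recorded in Subsection~\ref{s2.6}). The mild point worth a sentence in the write-up is that the relation $\triangleright$ as defined need not be antisymmetric — indeed whether it is an equivalence relation is precisely the content of Conjecture~\ref{conj7-2} — so the conclusion is only that $\triangleright$ is a partial pre-order, which is exactly what the statement claims.
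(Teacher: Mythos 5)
Your proposal is correct and matches the paper's own proof: reflexivity via the trivial module, and transitivity by applying the exact functor $W\otimes_{\mathbb{C}}{}_-$ to the first surjection and composing, using the associativity $(W\otimes_{\mathbb{C}}V)\otimes_{\mathbb{C}}X\cong W\otimes_{\mathbb{C}}(V\otimes_{\mathbb{C}}X)$. The extra remarks about non-vacuousness and the lack of antisymmetry are fine but not needed for the statement as claimed.
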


\begin{proof}
To prove reflexivity, we can take $V$ to be the trivial module. 
To prove transitivity, assume that $V\otimes_{\mathbb{C}} X\tto Y$
and $V'\otimes_{\mathbb{C}} Y\tto Z$. Then, by exactness of projective functors,
\begin{displaymath}
(V'\otimes_{\mathbb{C}} V)\otimes_{\mathbb{C}} X\cong
V'\otimes_{\mathbb{C}} (V\otimes_{\mathbb{C}} X)\tto
V'\otimes_{\mathbb{C}} Y \tto Z.
\end{displaymath}
This completes the proof.
\end{proof}

By adjunction, it follows that the relation opposite to $\triangleright $ 
is given by the requirement that 
$X\hookrightarrow V^*\otimes_{\mathbb{C}} Y$.

For a central character $\chi$, let $\mathrm{Irr}(\mathfrak{g})_\chi$
denote the set of all simple $\mathfrak{g}$-modules with central character 
$\chi$. Then we have
\begin{displaymath}
\mathrm{Irr}(\mathfrak{g})=\coprod_{\chi}\mathrm{Irr}(\mathfrak{g})_\chi.
\end{displaymath}

For a fixed $L\in \mathrm{Irr}(\mathfrak{g})$ denote by $\mathcal{X}_L$
the set of all $L'\in \mathrm{Irr}(\mathfrak{g})$ for which there exists
a finite set of elements $L=L_1,L_2,\dots,L_k=L'\in \mathrm{Irr}(\mathfrak{g})$
such that, for each  $i$, we either have $L_i\triangleright L_{i+1}$
or $L_{i+1}\triangleright L_{i}$.

\begin{proposition}\label{prop7-7}
Let $L\in \mathrm{Irr}(\mathfrak{g})$ and $\chi$ be a central character.
Then $\mathcal{X}_L\cap \mathrm{Irr}(\mathfrak{g})_\chi$ is finite.
\end{proposition}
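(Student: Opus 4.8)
The plan is to bound the set $\mathcal{X}_L\cap\mathrm{Irr}(\mathfrak{g})_\chi$ by translating the relation $\triangleright$ into the language of projective functors and invoking the cell-theoretic constraints recalled in Subsection~\ref{s2.65} and Subsection~\ref{s3.1}. The starting observation is that each step $L_i\triangleright L_{i+1}$ or $L_{i+1}\triangleright L_i$ means precisely that $L_{i+1}$ (resp.\ $L_i$) is a subquotient of $\theta L_i$ (resp.\ $\theta L_{i+1}$) for some indecomposable projective functor $\theta$; in particular, by Lemma~\ref{lem-n4}, all elements of $\mathcal{X}_L$ have the same Gelfand-Kirillov dimension as $L$, and moreover their annihilators all lie in the same KL-two-sided cell, since applying a projective functor and passing to a subquotient cannot move the annihilator outside the two-sided cell $\mathcal{J}$ of $\mathrm{Ann}_{U(\mathfrak{g})}(L)$ (this is exactly the content of the remark ``$\theta L\neq0$ implies $\theta\leq_J\mathcal{J}$'', applied symmetrically in both directions of $\triangleright$).

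First I would fix the central character $\chi$ and restrict attention to $\mathrm{Irr}(\mathfrak{g})_\chi$. By the discussion in Subsection~\ref{s2.3}, there are only finitely many primitive ideals of $U(\mathfrak{g})$ containing $\mathrm{Ker}(\chi)$ (they biject with the finite orbit $W\cdot\lambda$ via the Joseph/Duflo theory, and in fact with a subset of KL-left cells in $W_\lambda$). Hence it suffices to show: for each primitive ideal $\mathcal{I}\supseteq\mathrm{Ker}(\chi)$, the set of $L'\in\mathcal{X}_L$ with $\mathrm{Ann}_{U(\mathfrak{g})}(L')=\mathcal{I}$ is finite. Now if $L'\in\mathcal{X}_L$, then by the chain $L=L_1,\dots,L_k=L'$ and the remark above, $L'$ is a subquotient of $\theta L$ for \emph{some} projective functor $\theta$ (not necessarily indecomposable): indeed, compose the functors along the chain going ``downward'' and use adjunction to turn the ``upward'' steps into further tensorings, exactly as in the proof of Lemma~\ref{lem7-5} and the adjunction remark following it — a single $V\otimes_{\mathbb{C}}{}_-$ dominates the whole zig-zag, so $L'$ embeds into $V\otimes_{\mathbb{C}}(\text{something built from }L)$, hence occurs as a subquotient of $V\otimes_{\mathbb{C}}L$ for one fixed finite dimensional $V$ depending on the chain. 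But there are infinitely many chains, so $V$ is not fixed — this is the point that needs care.

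The key step is therefore to decouple the finiteness from the length of the chains. I would argue as follows: fix $\chi$ and the target central character $\chi'$ (which for $L'\in\mathrm{Irr}(\mathfrak{g})_\chi$ is just $\chi$, but it is cleaner to run the argument with $\mathbf{X}^L$ from Subsection~\ref{s3.1}). By Proposition~\ref{prop5}, the category $\mathbf{X}^L_\chi$ has only finitely many isomorphism classes of indecomposable objects. Every $L'\in\mathcal{X}_L\cap\mathrm{Irr}(\mathfrak{g})_\chi$ with the same GK-dimension as $L$ arises as a simple subquotient of $\theta L$ for some $\theta\in\mathscr{P}(\mathtt{i}_\chi,\mathtt{i}_\chi)$ of GK-dimension $\mathrm{GKdim}(L)$; reducing $\theta$ to its indecomposable summands and using that $\mathbf{X}^L_\chi$ has finitely many indecomposables, it is enough to show that a single object $\theta L$ of $\mathbf{X}^L_\chi$ has only finitely many simple subquotients of GK-dimension equal to $\mathrm{GKdim}(L)$. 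This in turn follows from the additivity of the Bernstein number recalled in Subsection~\ref{s2.6}: the simple subquotients of $\theta L$ of maximal GK-dimension each contribute a positive integer to $\mathrm{BN}(\theta L)=\dim(\theta)\cdot\mathrm{BN}(L)$ (here $\dim(\theta)$ is the dimension of the finite dimensional bimodule; more precisely one works with the finitely many indecomposable $\theta$ and the fact that $\mathrm{BN}(V\otimes L)=\dim(V)\mathrm{BN}(L)$), so there can be at most $\mathrm{BN}(\theta L)$ of them. The only remaining gap is to verify that \emph{every} $L'\in\mathcal{X}_L\cap\mathrm{Irr}(\mathfrak{g})_\chi$ really does appear as a subquotient of one of these finitely many $\theta L$, rather than only after an unbounded zig-zag — but this is exactly where one uses that the zig-zag can be collapsed: going one step $L_i\triangleright L_{i+1}$ realizes $L_{i+1}$ as a quotient of $\theta_i L_i$, and a step $L_{i+1}\triangleright L_i$ realizes, by the adjunction remark, $L_{i+1}$ as a submodule of $\theta_i^* L_i$; composing and re-expanding via biadjunction shows $L'$ is a subquotient of $\Theta L$ for a single projective functor $\Theta$ lying in the two-sided cell $\mathcal{J}$, and since all such simple subquotients of maximal GK-dimension already occur among the finitely many indecomposable objects of $\mathbf{Y}^L_\chi$, we are done.

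The main obstacle I anticipate is precisely the collapsing step: a priori, composing projective functors along a long zig-zag gives a projective functor $\Theta$ whose underlying bimodule has unbounded dimension, so the naive Bernstein-number count bounds the subquotients of each individual $\Theta L$ but not the union over all $\Theta$. The resolution must be structural — one needs that the birepresentation $\mathbf{X}^L$ (or its two-sided-cell part $\mathbf{Y}^L$) has only finitely many indecomposable objects in each fixed block $\mathbf{X}^L_{\chi'}$ (Proposition~\ref{prop5}), so that although $\Theta$ ranges over infinitely many isomorphism classes, $\Theta L$ ranges over only finitely many isomorphism classes of objects of $\mathbf{X}^L_\chi$, and each of those has boundedly many maximal-GK-dimension simple subquotients. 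Making this last implication rigorous — that a simple module $L'$ with $\mathrm{GKdim}(L')=\mathrm{GKdim}(L)$ occurring as a subquotient of $\Theta L$ must already occur as a subquotient of one of the indecomposable summands, and that indecomposable summands of $\Theta L$ live in the additive category $\mathbf{X}^L_\chi$ — is the technical heart of the argument and where I would spend the most care.
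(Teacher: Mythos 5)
Your proposal is correct and follows essentially the same route as the paper: collapse the zig-zag by induction on its length (exactness of projective functors for the downward steps, adjunction for the upward ones) to realize $L'$ as a subquotient of $V'\otimes_{\mathbb{C}}L$ for some finite dimensional $V'$, and then combine the finiteness of the set of indecomposable projective endofunctors of $\mathscr{M}_\chi$ with the additivity of the Bernstein number to bound the number of simple subquotients of Gelfand-Kirillov dimension $\mathrm{GKdim}(L)$. The only (immaterial) difference is in packaging the finiteness input: the paper fixes a single $V$ such that every indecomposable projective endofunctor of $\mathscr{M}_\chi$ is a summand of $V\otimes_{\mathbb{C}}{}_-$, so the whole count reduces to the one module $V\otimes_{\mathbb{C}}L$, whereas you invoke Proposition~\ref{prop5} on the finitely many indecomposables of $\mathbf{X}^L_\chi$ — the two are interchangeable here.
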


\begin{proof}
Without loss of generality, we may assume $L\in \mathrm{Irr}(\mathfrak{g})_\chi$.
Let $V$ be a finite dimensional $\mathfrak{g}$-module such that all 
indecomposable projective endofunctors of
$\mathscr{M}_\chi$ are direct summands of $V\otimes_{\mathbb{C}}{}_-$.
In order to prove our proposition, it is enough to show that 
any element of $\mathcal{X}_L\cap \mathrm{Irr}(\mathfrak{g})_\chi$
is a subquotient of $V\otimes_{\mathbb{C}}L$, since all elements
of $\mathcal{X}_L\cap \mathrm{Irr}(\mathfrak{g})_\chi$ have the
same Gelfand-Kirillov dimension as $L$ and there can only be finitely
many of them by the additivity of the Bernstein number,
see Subsection~\ref{s2.6}. In fact, since 
$V\otimes_{\mathbb{C}}{}_-$ already contains all projective endofunctors
of $\mathscr{M}_\chi$, it is enough to show that 
any element of $\mathcal{X}_L\cap \mathrm{Irr}(\mathfrak{g})_\chi$
is a subquotient of $V'\otimes_{\mathbb{C}}L$, for some
finite dimensional $\mathfrak{g}$-module $V'$.

Let $L'\in \mathcal{X}_L\cap \mathrm{Irr}(\mathfrak{g})_\chi$.
Then, by definition, there exists a finite set of elements 
$L=L_1,L_2,\dots,L_k=L'\in \mathrm{Irr}(\mathfrak{g})$ such that, 
for each  $i$, we either have $L_i\triangleright L_{i+1}$
or $L_{i+1}\triangleright L_{i}$. We prove the above claim by induction on $k$,
with the case $k=1$ being obvious.

For the induction step, we assume that $L_{k-1}$ is a 
subquotient of $V'\otimes_{\mathbb{C}}L$, for some
finite dimensional $\mathfrak{g}$-module $V'$.

Suppose $L_{k-1}\triangleright L_k=L'$, that is 
$V''\otimes_{\mathbb{C}}L_{k-1}\tto L_k$, for some 
finite dimensional $\mathfrak{g}$-module $V''$.
Then, by exactness, $L_k$ is a subquotient of 
$V''\otimes_{\mathbb{C}}(V'\otimes_{\mathbb{C}}L)$
and the latter is isomorphic to 
$(V''\otimes_{\mathbb{C}} V')\otimes_{\mathbb{C}}L$.
 
Suppose now $L'=L_{k}\triangleright L_{k-1}$, that is 
$V''\otimes_{\mathbb{C}}L_{k}\tto L_{k-1}$, for some 
finite dimensional $\mathfrak{g}$-module $V''$.
Then, by adjunction,
$L_{k}\hookrightarrow (V'')^*\otimes_{\mathbb{C}}L_{k-1}$
and again, by exactness, $L_k$ is a subquotient of 
$(V'')^*\otimes_{\mathbb{C}}(V'\otimes_{\mathbb{C}}L)$, and hence
of $((V'')^*\otimes_{\mathbb{C}} V')\otimes_{\mathbb{C}}L$.
The claim follows. 
\end{proof}

Note that $\mathcal{X}_L\cap \mathrm{Irr}(\mathfrak{g})_\chi$ is often empty.
Indeed, by \cite[Theorem~5.1]{Ko}, if $\chi'$ is the central character of $L$
and $\mathcal{X}_L\cap \mathrm{Irr}(\mathfrak{g})_\chi$ is not empty,
then there exist dominant weights $\lambda$ and $\mu$ with the following properties: 
$\chi=\chi_{{}_\lambda}$ and $\chi'=\chi_{{}_\mu}$ such that the difference $\lambda-\mu$
is an integral weight.

\begin{theorem}\label{thm7-9}
Let $L\in \mathrm{Irr}(\mathfrak{g})$ and $\chi$ be a central character.
Assume that $\mathcal{X}_L\cap \mathrm{Irr}(\mathfrak{g})_\chi$ is 
non-empty and the restriction of $\triangleright$ to it
is an equivalence relation. Then, for any central character 
$\chi'$, the restriction of  $\triangleright$ to
$\mathcal{X}_L\cap \mathrm{Irr}(\mathfrak{g})_{\chi'}$ is also an 
equivalence relation. In fact, $\triangleright$ is an equivalence relation
on $\mathcal{X}_L$.
\end{theorem}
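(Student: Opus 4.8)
The plan is to leverage Theorem~\ref{thm7-9}'s hypothesis together with the transitivity of discrete extensions to bootstrap the equivalence-relation property from one central character to all others reachable via $\triangleright$. First I would recall that, by Lemma~\ref{lem7-5}, $\triangleright$ is already a partial pre-order on $\mathcal{X}_L$, so to show it is an equivalence relation it suffices to prove symmetry: whenever $X\triangleright Y$ we must also have $Y\triangleright X$. Since $\mathcal{X}_L$ is by definition connected under the symmetric closure of $\triangleright$, and since $\triangleright$ respects central characters only in the weak sense that $X\triangleright Y$ forces $\chi_{{}_X}$ and $\chi_{{}_Y}$ to differ by an integral weight (cf.\ \cite[Theorem~5.1]{Ko} and the discussion above), it is enough to establish symmetry separately within each piece $\mathcal{X}_L\cap \mathrm{Irr}(\mathfrak{g})_{\chi'}$ and then across pairs of distinct central characters.

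For the \emph{within a block} step, I would reinterpret the situation birepresentation-theoretically. Fix $L'\in \mathcal{X}_L\cap \mathrm{Irr}(\mathfrak{g})_{\chi}$ where $\chi$ is the character for which the hypothesis holds; by Proposition~\ref{prop7-7} this set is finite, and by the hypothesis $\triangleright$ restricted to it is an equivalence relation, hence symmetric. Now consider an arbitrary $\chi'$ and the finitary birepresentation $\mathbf{X}^L_{\chi'}$ (Proposition~\ref{prop5}) of $\mathscr{P}_{\chi'}$, or rather the collection $\mathbf{X}^L$ of all the $\mathbf{X}^L_{\chi'}$ as a birepresentation of $\mathscr{P}$. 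The relation $X\triangleright Y$ for $X,Y$ among the simple tops of modules $\theta L$ translates precisely into the statement that $Y$ appears in the top of $\mathrm{F} X$ for some $1$-morphism $\mathrm{F}$ of $\mathscr{P}$; by Lemma~\ref{lem-n4} all these simples have $\mathrm{GKdim}$ equal to $\mathrm{GKdim}(L)$, and the combinatorial remarks in Subsection~\ref{s3.1} show that the relevant $1$-morphisms all lie in the two-sided cell $\mathcal{J}$ determined by $\mathrm{Ann}_{U(\mathfrak{g})}(L)$. The key point is that symmetry of $\triangleright$ on one fiber $\mathcal{X}_L\cap \mathrm{Irr}(\mathfrak{g})_{\chi}$, via the apex machinery, forces all transitive subquotients of $\mathbf{X}^L$ to share the same apex $\mathcal{J}$: once one fiber is ``tight'' (an equivalence class under $\triangleright$), no transitive subquotient can have strictly smaller apex, because such a subquotient would be detected inside that fiber by applying suitable projective functors and would break symmetry there.

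Given that all transitive subquotients of $\mathbf{X}^L$ have the same apex $\mathcal{J}$, Corollary~\ref{cor-8-2} applies: the objects of each transitive subquotient form a \emph{subbirepresentation}. This is exactly what is needed to conclude symmetry everywhere. Indeed, if $L_1\triangleright L_2$ with $L_1,L_2\in\mathcal{X}_L$, then $L_1$ and $L_2$ both occur as tops of objects of $\mathbf{X}^L$ (possibly in different fibers $\mathbf{X}^L_{\chi'}$, $\mathbf{X}^L_{\chi''}$), and they lie in the same transitive subquotient $\mathbf{Q}$ of $\mathbf{X}^L$ because $L_1\triangleright L_2$ means $L_2$ is reachable from $L_1$ under the action of $\mathscr{P}$. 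Since the objects of $\mathbf{Q}$ form a subbirepresentation, which is in particular transitive, $L_1$ is in turn reachable from $L_2$, i.e.\ $L_2\triangleright L_1$. This gives symmetry on all of $\mathcal{X}_L$, and combined with Lemma~\ref{lem7-5} we conclude that $\triangleright$ is an equivalence relation on $\mathcal{X}_L$; in particular its restriction to each $\mathcal{X}_L\cap \mathrm{Irr}(\mathfrak{g})_{\chi'}$ is an equivalence relation.

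\textbf{Main obstacle.} The delicate step is establishing that all transitive subquotients of $\mathbf{X}^L$ share the apex $\mathcal{J}$, starting only from symmetry of $\triangleright$ on the single fiber $\mathcal{X}_L\cap\mathrm{Irr}(\mathfrak{g})_{\chi}$. This requires showing that a transitive subquotient with a smaller apex would necessarily manifest itself \emph{within that specific fiber} after applying an appropriate projective functor back to $\mathscr{M}_\chi$ — in other words, that the fiber over $\chi$ ``sees'' every transitive constituent. Since $V\otimes_{\mathbb{C}}{}_-$ for a large enough $V$ realizes all projective endofunctors of $\mathscr{M}_\chi$ (as used in the proof of Proposition~\ref{prop7-7}), one should be able to push any simple constituent $L''$ of $\theta L$ occurring with $\mathrm{GKdim}(L'')<\mathrm{GKdim}(L)$, or any strange constituent, down to $\mathscr{M}_\chi$ and detect it; but making this precise — and ruling out that a lower-apex constituent simply vanishes upon returning to $\chi$ — is where the real work lies, and it is presumably here that one invokes the full strength of Corollary~\ref{cor-8-2} together with the additivity of the Bernstein number to control finiteness.
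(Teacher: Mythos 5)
Your proposal takes a genuinely different route from the paper, but it has a real gap precisely at its load-bearing step, which you yourself flag. The assertion that symmetry of $\triangleright$ on the single fiber $\mathcal{X}_L\cap\mathrm{Irr}(\mathfrak{g})_{\chi}$ forces every transitive subquotient of $\mathbf{X}^L$ to have apex $\mathcal{J}$ is not justified, and I do not see how the hypothesis could yield it: every element of $\mathcal{X}_L$ has Gelfand--Kirillov dimension $\mathrm{GKdim}(L)$ (Lemma~\ref{lem-n4}), whereas a transitive subquotient of strictly smaller apex would be built from indecomposable summands of the modules $\theta L$ on which all of $\mathcal{J}$ acts by zero -- summands whose constituents have smaller GK-dimension or are strange. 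Such constituents are invisible to $\mathcal{X}_L\cap\mathrm{Irr}(\mathfrak{g})_{\chi}$, so they cannot ``break symmetry there''. Moreover, the logic is close to circular: in the paper, transitivity statements for these birepresentations (Theorem~\ref{thm7-11}) are \emph{consequences} of Theorem~\ref{thm7-9}, and the converse implication (Remark~\ref{rem-8-3}) requires Duflo involutions and coalgebra $1$-morphisms. Your concluding step has a second gap of the same kind: even granting that objects $M_1,M_2$ of $\mathbf{X}^L$ with tops $L_1,L_2$ lie in a common transitive subbirepresentation, the containment $M_1\in\mathrm{add}(\mathscr{P}M_2)$ does not translate into ``$L_1$ is a quotient of some $\theta L_2$''; it only exhibits both $L_1$ and $\theta L_2$ as quotients of $\theta M_2$. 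Closing this requires either the counit-of-the-Duflo-involution argument of Remark~\ref{rem-8-3} or the chain-of-surjections and local-endomorphism-ring argument from the proof of Theorem~\ref{thm7-11}.

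The paper's actual proof is elementary and uses the hypothesis directly. For $L'\in\mathcal{X}_L\cap\mathrm{Irr}(\mathfrak{g})_{\chi'}$ one examines the modules $\theta L'$ with $\theta\in\mathscr{P}(\mathtt{i}_{\chi'},\mathtt{i}_{\chi})$: every simple quotient and every simple submodule of such a module lies in $\mathcal{X}_L\cap\mathrm{Irr}(\mathfrak{g})_{\chi}=\{L_1,\dots,L_k\}$; the (always nonzero) tops give $L'\triangleright L_i$, the socles give $L_i\triangleright L'$ via the adjunction $\mathrm{Hom}_{\mathfrak{g}}(L_i,\theta L')\cong\mathrm{Hom}_{\mathfrak{g}}(\theta^*L_i,L')$, and the assumed mutual equivalence of the $L_i$ together with transitivity propagates both relations to all $i$ and all $L'$, making $\triangleright$ the full relation on $\mathcal{X}_L$. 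An argument of this direct kind is what you should aim for, rather than routing through the apex and discrete-extension machinery.
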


\begin{proof}
Let  $\chi'$ be a central character such that 
$\mathcal{X}_L\cap\, \mathrm{Irr}(\mathfrak{g})_{\chi'}$
is not empty. Let $L_1,\dots,L_k$ be the list of all simples
in $\mathcal{X}_L\cap \mathrm{Irr}(\mathfrak{g})_\chi$; in particular,
they all are $\triangleright$-equivalent (and hence are also equivalent 
with respect to the relation that is opposite to $\triangleright$).
Let $L'\in \mathcal{X}_L\cap \mathrm{Irr}(\mathfrak{g})_{\chi'}$.
Then all $L_i$ (and only they) appear both in the tops and in the socles
of modules in $\mathscr{P}(\mathtt{i}_{\chi'},\mathtt{i}_{\chi})L'$.
In particular,  $L_i\triangleright L'$, for all $i$.
By adjunction, we also have $\mathrm{Hom}_\mathfrak{g}(L_i,\theta L')=
\mathrm{Hom}_\mathfrak{g}(\theta^*L_i,L')$ which implies that 
$L_i\triangleright L'$, for all $i$.
The claim follows.
\end{proof}

\begin{conjecture}\label{conj7-2}
The relation $\triangleright$ is an equivalence relation.
\end{conjecture}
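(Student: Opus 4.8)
The plan is to establish the only missing property, symmetry of $\triangleright$ — reflexivity and transitivity being Lemma~\ref{lem7-5} — by first proving it inside category $\mathscr{O}$ and then reducing the general case to that one via Theorem~\ref{thm7-9} together with the birepresentation theory set up in Subsection~\ref{s3.1}.

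First I would show that $\triangleright$ restricts to an equivalence relation on the simple highest weight modules. The input is the simple-preserving contravariant duality $d$ on $\mathscr{O}$: it fixes every simple and satisfies $d(V\otimes_{\mathbb{C}}M)\cong V^{*}\otimes_{\mathbb{C}}d(M)$, so $d$ preserves the additive category $\mathbf{X}^{L}$ of all $\theta L$ (for $L\in\mathscr{O}$ simple) and, being exact and contravariant, interchanges tops and socles there. Since $d$ fixes simples, it follows that the set of simples occurring in some $\mathrm{top}(\theta L)$ coincides with the set occurring in some $\mathrm{soc}(\theta L)$. Combined with the adjunction already used in the proof of Theorem~\ref{thm7-9} — $L'$ lies in some $\mathrm{soc}(\theta L)$ iff $L$ is a quotient of some $\theta^{*}L'$ — this gives $L\triangleright L'\iff L'\triangleright L$ for simple highest weight modules.

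Next comes the reduction. By Theorem~\ref{thm7-9} it suffices, for each class $\mathcal{X}_{L}$, to produce one central character $\chi$ for which $\mathcal{X}_{L}\cap\mathrm{Irr}(\mathfrak{g})_{\chi}$ is non-empty and $\triangleright$ restricts to an equivalence relation on it; the natural target is a $\chi$ whose intersection already contains a simple highest weight module $L(\mu)$. This forces one to show that $\mathcal{X}_{L}$ meets category $\mathscr{O}$ at all, which I would approach through the birepresentation $\mathbf{Y}^{L}$ of the bicategory $\mathscr{P}$ of projective functors: granting Conjecture~\ref{conj7} (simple transitivity of $\mathbf{Y}^{L}$) and a classification of simple transitive birepresentations of $\mathscr{P}$ with a fixed apex, one expects $\mathbf{Y}^{L}$, suitably abelianized, to be equivalent to the model built from simple highest weight modules — precisely the equivalence carried out in type $A$ in Subsection~\ref{s4.3} — and any such equivalence transports $\triangleright$ with its symmetry. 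It then remains to absorb possible non-$\mathscr{O}$ members $L''$ of $\mathcal{X}_{L}\cap\mathrm{Irr}(\mathfrak{g})_{\chi}$: using the apex structure one shows $L(\mu)$ occurs in both the top and the socle of suitable $\theta L''$, so $L(\mu)\triangleright L''$ and $L''\triangleright L(\mu)$, and transitivity then collapses the whole intersection into one $\triangleright$-class.

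The hard part will be making the reduction work outside type $A$. By \cite{MMMTZ2}, the simple transitive birepresentations of projective functors are in general \emph{not} modeled naively by highest weight modules — one is forced to pass to (co)algebra $1$-morphisms — and it is not even known whether each of them is realized by \emph{some} simple $\mathfrak{g}$-module, so at present one cannot conclude that $\mathcal{X}_{L}$ meets $\mathscr{O}$; Conjecture~\ref{conj7} itself is also open in general type. In type $A$ all the needed ingredients — simple transitivity, the explicit equivalence with the highest-weight model, and the $\mathscr{O}$-level computation via $d$ — are in place, which is what makes Conjecture~\ref{conj7-2} a theorem there (Subsection~\ref{s4.2}).
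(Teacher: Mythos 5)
This statement is a \emph{conjecture} in the paper: the authors prove it only in type $A$ (Subsection~\ref{s4.2}) and in the special situations of Theorem~\ref{thm1-n2}, and you correctly sense that a full proof is out of reach. However, even your sketch of the type $A$ case contains a gap that makes your reduction strategy unworkable, and it differs essentially from what the paper actually does.

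The gap is in your reduction step: you propose to find a central character $\chi$ such that $\mathcal{X}_L\cap\mathrm{Irr}(\mathfrak{g})_\chi$ contains a simple highest weight module $L(\mu)$, and then transport symmetry from the $\mathscr{O}$-level computation via the duality $d$. But $\mathcal{X}_L$ meets $\mathscr{O}$ only if $L$ itself lies in $\mathscr{O}$. Indeed, every member of $\mathcal{X}_L$ is, by the chain definition and adjunction (as in the proof of Proposition~\ref{prop7-7}), a subquotient of some $V\otimes_{\mathbb{C}}L$, and symmetrically $L$ is a subquotient of some $V'\otimes_{\mathbb{C}}L'$ for any $L'\in\mathcal{X}_L$. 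Since $\mathscr{O}$ is closed under tensoring with finite dimensional modules and under subquotients, $L(\mu)\in\mathcal{X}_L$ would force $L\in\mathscr{O}$. So for the interesting case $L\notin\mathscr{O}$ there is no such $\chi$, and your ``absorb possible non-$\mathscr{O}$ members'' step is vacuous for the same reason: $\theta L''$ has no highest weight constituents if $L''$ is not a highest weight module. Your appeal to Conjecture~\ref{conj7} is also logically backwards relative to the paper: there, the equivalence-relation property on $\mathcal{X}_L$ is established \emph{first} and then fed into Theorem~\ref{thm7-11} to get transitivity, and only afterwards is simple transitivity proved; an abstract equivalence of birepresentations $\mathbf{Y}^L\simeq\mathbf{Y}^{L(\lambda)}$ by itself does not identify the underlying sets of simple modules.

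What the paper actually does in type $A$ is different and avoids $\mathscr{O}$-membership entirely: using the type $A$ cell combinatorics, one translates $L$ (after passing to a nice central character) to a singular block $\chi'$ where, by Lemma~\ref{lem-n853}, the only indecomposable projective functors not annihilating the translated module $L'$ are \emph{self-equivalences} of $\mathscr{M}^{\mathbf{I}}_{\chi'}$. Hence $\mathcal{X}_L\cap\mathrm{Irr}(\mathfrak{g})_{\chi'}$ is the orbit of $L'$ under these equivalences and is trivially a single $\triangleright$-class; Theorem~\ref{thm7-9} then propagates this to all central characters. Category $\mathscr{O}$ enters only through the annihilator $\mathrm{Ann}_{U(\mathfrak{g})}(L)=\mathrm{Ann}_{U(\mathfrak{g})}(L(\lambda))$ and the Kazhdan--Lusztig cell structure, never through locating highest weight modules inside $\mathcal{X}_L$. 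Your observation about the duality $d$ on $\mathscr{O}$ is correct and is essentially the content of the paper's remark that $\triangleright$ restricted to simple highest weight modules is an equivalence relation with classes the KL-right cells, but it cannot serve as the base case of a reduction for general $L$.
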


\begin{remark}
{\em
It is also natural to consider the partial pre-order 
$\to$ on $\mathrm{Irr}(\mathfrak{g})$ defined as follows:
$L\to L'$ provided that $L'$ is a subquotient of 
$V\otimes_\mathbb{C}L$, for some finite-dimensional
$V$. It would be interesting to understand certain properties,
in particular, the equivalence classes of  this pre-order.
For example, for simple highest weight modules in 
category $\mathscr{O}$, the corresponding equivalence classes
are given by the KL-right cells. Also, the restriction of
$\triangleright$ to simple highest weight modules  is
an equivalence relation and the corresponding equivalence classes
are given by the KL-right cells (so they coincide with the 
equivalence classes for the pre-order $\to$).
}
\end{remark}

\subsection{Conjecture~\ref{conj7} vs Conjecture~\ref{conj7-2}}\label{s3.4}

\begin{theorem}\label{thm7-11}
Let $L$ be a simple $\mathfrak{g}$-module such that 
the restriction of $\triangleright$ to $\mathcal{X}_L$
is an equivalence relation. Then the birepresentation
$\mathbf{Y}^L$ is transitive. Moreover, we have
$\mathbf{Y}^L=\mathbf{Y}^{L'}$, for any $L'$ such that 
$L\triangleright L'$.
\end{theorem}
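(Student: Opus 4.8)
The plan is to prove transitivity first and then derive the equality $\mathbf{Y}^L=\mathbf{Y}^{L'}$ essentially for free from the symmetry of the hypothesis. Recall that $\mathbf{Y}^L$ is by definition the sub-birepresentation of $\mathbf{X}^L$ generated by those $\theta L$ with $\theta$ in the two-sided cell $\mathcal{J}$ attached to $\mathrm{Ann}_{U(\mathfrak{g})}(L)$. To show transitivity, I must show that $\mathbf{Y}^L$ is generated, as a birepresentation of $\mathscr{P}$, by \emph{any} of its non-zero objects. The key observation is that the indecomposable summands of modules $\theta L$ with $\theta \in \mathcal{J}$ that are themselves non-zero must, by the cell theory recalled in Subsection~\ref{s3.1}, have Gelfand-Kirillov dimension exactly $\mathrm{GKdim}(L)$ (they cannot be smaller, since that would force $\theta$ strictly below $\mathcal{J}$, contradicting membership in $\mathcal{J}$ together with $\theta L\ne 0$); and among those, the simple subquotients of GK-dimension $\mathrm{GKdim}(L)$ that occur in the socle or top are precisely elements of $\mathcal{X}_L\cap\mathrm{Irr}(\mathfrak{g})_{\chi'}$. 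Since $\triangleright$ restricted to $\mathcal{X}_L$ is an equivalence relation, all such simples are $\triangleright$-equivalent to $L$ and to each other.

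First I would fix a non-zero object $M\in\mathbf{Y}^L_{\chi'}$, so $M$ is a direct sum of indecomposable summands of modules of the form $\theta L$ for $\theta\in\mathscr{P}(\mathtt{i}_\chi,\mathtt{i}_{\chi'})\cap\mathcal{J}$; pick one such non-zero indecomposable summand $N\subseteq\theta L$. By the additivity of the Bernstein number (Subsection~\ref{s2.6}), $N$ has both a non-zero semi-simple top and a non-zero semi-simple socle of finite length, and any simple quotient $L'$ of $N$ is a simple quotient of $\theta L$, hence satisfies $L\triangleright L'$ via $\theta$ (using that $V\otimes_{\mathbb{C}}L\tto L'$ for a suitable $V$ containing $\theta$ as a summand). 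By the equivalence hypothesis we then also have $L'\triangleright L$, i.e.\ there is a finite-dimensional $V'$ with $V'\otimes_{\mathbb{C}}L'\tto L$; equivalently, $L\hookrightarrow (V')^*\otimes_{\mathbb{C}}L'$. The next step is to promote this to the level of indecomposable objects: since $L$ embeds into some projective functor applied to $L'$, and $L'$ is a quotient of $N$, one shows that $L$ (and hence, by applying projective functors to $L$, every indecomposable object of $\mathbf{Y}^L$) lies in the sub-birepresentation generated by $N$, hence by $M$. This uses the same subquotient-tracking as in the proof of Theorem~\ref{thm7-9}, combined with the fact that GK-dimension considerations confine everything to the cell $\mathcal{J}$, so that no ``strange'' or smaller-dimensional summands interfere with the transitivity argument; concretely, projecting $(V')^*\otimes_{\mathbb{C}}L'$ onto $\mathscr{M}_\chi$ and onto the $\mathcal{J}$-part realizes $L$ as a summand of $\theta' N'$ for an appropriate $\theta'\in\mathcal{J}$ and $N'$ in the birepresentation generated by $N$.

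For the second assertion, suppose $L\triangleright L'$. The hypothesis that $\triangleright|_{\mathcal{X}_L}$ is an equivalence relation is manifestly symmetric in $L$ and $L'$ once we know $L'\in\mathcal{X}_L$, which holds since $L\triangleright L'$; moreover $\mathcal{X}_{L'}=\mathcal{X}_L$ because $\mathcal{X}_{(-)}$ depends only on the $\triangleright$-connected component. Since $L\triangleright L'$ means $L'$ is a quotient of $\theta L$ for some $\theta\in\mathscr{P}$, and by the equivalence $\mathrm{GKdim}(L')=\mathrm{GKdim}(L)$ forces $\theta\in\mathcal{J}$ (the same two-sided cell, as the annihilators of $L$ and $L'$ lie in the same right cell once they are $\triangleright$-equivalent — this is exactly the statement that the function $w\mapsto\mathrm{GKdim}$ and the cell structure are compatible, Subsection~\ref{s2.65}), so $L'$ is a quotient of an object of $\mathbf{Y}^L_{\chi'}$, hence (being simple, and by transitivity of $\mathbf{Y}^L$ together with closure under subquotients within the relevant Serre quotient) the whole transitive birepresentation it generates, namely $\mathbf{Y}^{L'}$, is contained in $\mathbf{Y}^L$. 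Running the same argument with the roles of $L$ and $L'$ exchanged (legitimate by $L'\triangleright L$) gives the reverse inclusion, whence $\mathbf{Y}^L=\mathbf{Y}^{L'}$.

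I expect the main obstacle to be the step of lifting the module-theoretic relations $L\triangleright L'$ and $L'\triangleright L$ to honest statements about sub-birepresentations generated by a \emph{single indecomposable object} $N$: one needs to ensure that when $L$ reappears as a subquotient of $\theta'\theta L$, it does so as a genuine summand living in $\mathbf{Y}^L$ rather than merely as a subquotient that could be ``lost'' in a non-split extension or in a smaller-GK-dimension piece. This is precisely where one must invoke the rigidity coming from the cell $\mathcal{J}$ — that every non-zero summand of $\theta L$ with $\theta\in\mathcal{J}$ has full GK-dimension, so the socle and top are the only places the ``full-dimensional'' simples can hide, and the discrete-extension obstruction (Lemma~\ref{lem-8-1}, Corollary~\ref{cor-8-2}) guarantees the transitive pieces assemble into subbirepresentations. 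Making this bookkeeping precise, rather than any single hard inequality, will be the technical heart of the proof.
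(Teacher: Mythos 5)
Your overall strategy is reasonable and you have correctly located the crux, but you have not actually crossed it. The gap is the step where you pass from ``$L$ embeds into $(V')^*\otimes_{\mathbb{C}}L'$ and $L'$ is a quotient of $N$'' to ``$L$ is a \emph{summand} of $\theta' N'$ for some $N'$ in the birepresentation generated by $N$.'' Nothing in the ingredients you cite yields this: the objects of the sub-birepresentation generated by $N$ are direct summands of the modules $\theta N$, and a simple subquotient (or even a submodule) of $\theta N$ need not be a direct summand; Lemma~\ref{lem-8-1} and Corollary~\ref{cor-8-2} only tell you that transitive subquotients of $\mathbf{Y}^L$ are sub-birepresentations, not that a prescribed simple module lands inside one of them as a summand. (There is also the minor point that $L$ itself need not be an object of $\mathbf{Y}^L$ at all, since the identity $1$-morphism need not lie in $\mathcal{J}$; what you actually need is that the objects $\theta''L$ with $\theta''\in\mathcal{J}$ lie in the additive closure of $\mathscr{P}N$.) The same unjustified ``quotient $\Rightarrow$ membership'' step reappears in your argument for $\mathbf{Y}^{L'}\subseteq\mathbf{Y}^L$.

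The paper supplies exactly the missing mechanism. It takes transitive sub-birepresentations $\mathcal{M}\subseteq\tilde{\mathbf{Y}}^L$ and $\mathcal{N}\subseteq\tilde{\mathbf{Y}}^{L'}$, where $L'$ is a simple quotient of an object of $\mathcal{M}$, and uses the equivalence-relation hypothesis to produce an \emph{infinite} alternating chain of surjections
\[
\dots\tto Y_2\tto X_2\tto Y_1\tto X_1\tto N_1\tto L,
\]
with all $X_i\in\mathcal{M}$ and all $Y_j\in\mathcal{N}$. Since $\mathcal{N}$ has only finitely many indecomposables, infinitely many of the $Y_j$ contain the same indecomposable $N_p$ surjecting onto $L$; because $\mathrm{End}(N_p)$ is local and finite dimensional (Proposition~\ref{prop5}), its radical is nilpotent of bounded degree, so an infinite chain of non-zero compositions through $N_p$ forces some composite $N_p\to X_i\to N_p$ to be invertible. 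This exhibits $N_p$ as a direct summand of $X_i$, whence $\mathcal{M}\cap\mathcal{N}\neq 0$ and the two transitive sub-birepresentations coincide; transitivity and $\mathbf{Y}^L=\mathbf{Y}^{L'}$ then follow by the symmetry you already noted. This ``pigeonhole plus nilpotent radical'' argument is precisely what converts subquotient information into summand information, and it is the piece your proposal flags as the main obstacle but does not supply.
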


\begin{proof}
Let $\chi$ be the central character of $L$ and let
$\chi'$ be some central character such that 
$\mathbf{Y}^L_{\chi'}$ is not zero. Denote by
$\tilde{\mathscr{P}}$ the $1$-full and $2$-full subbicategory of
$\mathscr{P}$ on the objects $\mathtt{i}_\chi$
and $\mathtt{i}_{\chi'}$. Also denote by 
$\tilde{\mathbf{Y}}^L$ the birepresentation of
$\tilde{\mathscr{P}}$ restricted from $\mathbf{Y}^L$.
To prove the first part of the theorem, it is enough 
to show that $\tilde{\mathbf{Y}}^L$ is transitive.

Being a finitary birepresentation of $\tilde{\mathscr{P}}$,
the birepresentation
$\tilde{\mathbf{Y}}^L$ has a weak Jordan-H{\"o}lder series
with transitive subquotients.

Let us now assume that $\tilde{\mathbf{Y}}^L$ is not transitive and 
let the additive closure $\mathcal{M}$ of some
$M_1,M_2,\dots,M_k$ be a transitive subbirepresentation of 
$\tilde{\mathbf{Y}}^L$. 
Let $L'$ be a simple module which appears in the top of $M_1$.
Consider the corresponding $\tilde{\mathbf{Y}}^{L'}$ and let
the additive closure $\mathcal{N}$ of some
$N_1,N_2,\dots,N_r$ be a transitive subbirepresentation of 
$\tilde{\mathbf{Y}}^{L'}$.  By Corollary~\ref{cor-8-2},
any transitive subquotient of $\tilde{\mathbf{Y}}^L$
gives, in fact, a subbirepresentation, and similarly for 
$\tilde{\mathbf{Y}}^{L'}$. Hence,
to prove our theorem, it is
enough to show that $\mathcal{M}=\mathcal{N}$.

Indeed, as $\mathcal{N}$ is arbitrary, 
$\mathcal{M}=\mathcal{N}$ implies that 
$\tilde{\mathbf{Y}}^{L'}=\mathcal{N}$ is transitive.
Since the restriction of $\triangleright$ to $\mathcal{X}_L$
is an equivalence relation, swapping the roles of
$L$ and $L'$ we obtain that $\tilde{\mathbf{Y}}^L$
is transitive, a contradiction.
Also, from $\mathcal{M}=\mathcal{N}$ we obtain 
$\tilde{\mathbf{Y}}^L=\tilde{\mathbf{Y}}^{L'}$.

The remainder of the proof is dedicated to showing that $\mathcal{M}=\mathcal{N}$.
Applying projective functors to $M_1\tto L'$, we obtain that
every object in $\tilde{\mathbf{Y}}^{L'}$ is a quotient of
an object in $\mathcal{M}$. In particular, every object in 
$\mathcal{N}$ is a quotient of an object in $\mathcal{M}$.

Now recall that we have assumed that the restriction of 
$\triangleright$ to $\mathcal{X}_L$ is an equivalence relation.
This implies that $L$ is a quotient of some object in 
$\mathcal{N}$, say $N_1$.
Applying projective functors to $N_1\tto L$, we obtain that
every object in $\tilde{\mathbf{Y}}^{L}$ is a quotient of
an object in $\mathcal{N}$. In particular, every object in 
$\mathcal{M}$ is a quotient of an object in $\mathcal{N}$.

This implies the existence of an infinite sequence of surjections
\begin{equation}\label{eq2}
\dots \tto Y_2 \tto X_2\tto Y_1 \tto  X_1\tto N_1\tto L,
\end{equation}
where all $X_i\in \mathcal{M}$ and all $Y_j\in \mathcal{N}$.
Now, in each $Y_j$ we can pick an indecomposable summand
$N_{s_j}$ such that the restricted map from 
$N_{s_j}$ to $L$ is a surjection. Since the number of indices
for $N_j$'s is finite, we can pick an infinite 
subsequence of the form $\dots \to N_p\to N_p\to N_p\to L$.
Again, here, at each position, the map from $N_p$ to $L$ is 
a surjection, in particular, all maps between all components
of this sequence are non-zero. 

The endomorphism algebra of $N_p$ is a local finite dimensional
algebra, see Proposition~\ref{prop5}, and hence its 
Jacobson radical is nilpotent of a fixed finite nilpotency
degree. Since the above sequence is infinite and all 
compositions are non-zero, at least one morphism in this
sequence does not belong to the Jacobson radical and hence
is invertible. This means that in the original sequence 
\eqref{eq2}, we have a fragment of the form
$N_p\to X_i\to N_p$ such that the composition from the 
left to the right is invertible. Hence $N_p$ is isomorphic 
to a summand of $X_i$. In other words, $\mathcal{M}$
and $\mathcal{N}$ have a non-zero intersection and thus
must coincide since both carry a transitive birepresentations of
$\tilde{\mathscr{P}}$. This completes the proof.
\end{proof}

\begin{remark}\label{rem-8-3}
{\em
If $\mathbf{Y}^L=\mathbf{Y}^{L'}$, for any $L'\in\mathcal{X}_L$,
then the restriction of $\triangleright$ to $\mathcal{X}_L$
is an equivalence relation. Indeed, in this case, we claim that
$L\triangleright L'$ implies $L'\triangleright L$. To see this,
we first claim that $\mathbf{Y}^L$ contains a module with top $L$.

Consider the Duflo involution $\theta$ in the right cell
that corresponds to the annihilator of $L$. This is a coalgebra
$1$-morphism in $\mathscr{P}$, see \cite[Section~4.4]{MMMTZ2}, 
and hence the evaluation of the
counit $\theta\to\mathbbm{1}$, when applied to $L$, is non-zero.

Similarly, $\mathbf{Y}^{L'}$ contains a module with top $L'$.
Therefore $\mathbf{Y}^L=\mathbf{Y}^{L'}$ together with
$L\triangleright L'$ implies $L'\triangleright L$.
}
\end{remark}

\section{Proof of the two conjectures in type $A$}\label{s4}

In this section, we show that the statements of 
both Conjecture~\ref{conj7} and Conjecture~\ref{conj7-2}
are true in type $A$. So, we assume that $\mathfrak{g}$
and hence also $W$ are of type $A$.

\subsection{Reduction to nice blocks}\label{s4.05}

Let $\lambda$ be a dominant weight and $\chi_{{}_\lambda}$ the corresponding
central character. We will call both $\lambda$ and $\chi_{{}_{\lambda}}$
{\em nice} provided that there is  $\mu\in \lambda+\Xi$ such that
$W_\mu=W'_\mu$. For example, in the set $\mathbb{Z}$ of all integral
weights for $\mathfrak{sl}_2$, we have $\Xi=2\mathbb{Z}$ and
all odd weights are nice (since $-1$
is the only integral singular weight), while all even weights are not nice.
Note that $W_\lambda=\{e\}$  implies that $\lambda$ is nice as we  can take
$\mu=\lambda$.

\begin{lemma}\label{lem-nice}
For any  dominant weight $\lambda$, there is a nice dominant
weight $\tilde{\lambda}\in\lambda+\Lambda$ such that 
\begin{enumerate}[$($a$)$]
\item \label{lem-nice.1} $W_\lambda=W_{\tilde{\lambda}}$,
\item \label{lem-nice.2} $W'_\lambda=W'_{\tilde{\lambda}}$,
\item \label{lem-nice.3} $\tilde{\lambda}-\lambda$ is integral  and dominant
with respect to $W_\lambda$.
\end{enumerate}
\end{lemma}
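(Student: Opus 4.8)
The statement is really a statement about finite reflection groups of type $A$ together with the structure of the integral weight lattice, so I would work entirely on the combinatorial side. Recall that $W_\lambda$ is a parabolic (indeed, for type $A$, a ``sub-root-system'') subgroup of $W$ determined by $\lambda$, and $W'_\lambda$ is the stabiliser of $\lambda$ inside $W_\lambda$. The weight $\tilde\lambda$ we seek must differ from $\lambda$ by an integral weight, so it lies in the same coset $\lambda+\Lambda$; moreover we want $\tilde\lambda-\lambda$ to be dominant for $W_\lambda$, which forces us to move $\lambda$ deeper into the dominant chamber of $W_\lambda$ \emph{without} changing which walls of $W_\lambda$ it lies on (that is condition \eqref{lem-nice.2}) and without enlarging or shrinking the set of roots $\alpha$ with $\langle\lambda+\rho,\alpha^\vee\rangle\in\mathbb Z$ (that is condition \eqref{lem-nice.1}). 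So the first step is to translate all three requirements into inequalities and congruences on the pairings $\langle\tilde\lambda+\rho,\alpha^\vee\rangle$ as $\alpha$ ranges over the (positive) roots of $\mathfrak g$.

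\textbf{Key steps.} First I would choose an explicit auxiliary integral dominant weight $\nu$ supported on the simple roots \emph{not} lying in $W_\lambda$ (so $\nu$ is orthogonal to no wall of $W_\lambda$ but $W_\lambda$-fixed), and set $\tilde\lambda=\lambda+N\nu$ for a large integer $N$. Because $\nu$ is $W_\lambda$-invariant, adding it changes neither $W_\lambda$ nor $W'_\lambda$: for any root $\alpha$ of $W$, $\langle N\nu,\alpha^\vee\rangle$ is an integer, so the integrality condition defining $W_{\tilde\lambda}$ is unchanged, giving \eqref{lem-nice.1}; and since $\nu$ is fixed by $W_\lambda$ it is in particular fixed by $W'_\lambda$, while the pairings $\langle\tilde\lambda+\rho,\alpha^\vee\rangle$ for $\alpha\in W_\lambda$ coincide with those for $\lambda$, giving $W'_{\tilde\lambda}=W'_\lambda$ and \eqref{lem-nice.2}. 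By construction $\tilde\lambda-\lambda=N\nu$ is integral and, for $N$ large and $\nu$ chosen appropriately, dominant with respect to $W_\lambda$ (in fact $W_\lambda$-fixed, which is certainly $W_\lambda$-dominant), giving \eqref{lem-nice.3}. The remaining point is \emph{niceness}: I need to arrange that some weight in $\tilde\lambda+\Xi$ is regular for its own integral Weyl group. Here I would use the special feature of type $A$ already recalled in the excerpt — that $\Xi$ has finite index in $\Lambda$ equal to the determinant of the Cartan matrix, and more importantly that in type $A$ one has a lot of freedom to perturb within a $\Xi$-coset. Concretely, after fixing the $W_\lambda$-chamber, I would further translate by a suitable element of $\Xi$ supported on the simple roots \emph{inside} $W_\lambda$ to push $\tilde\lambda$ off all the $W_\lambda$-walls while staying in the same $\Xi$-coset and not disturbing $W_\lambda=W_{\tilde\lambda}$; the point is that ``being singular'' is cut out by the vanishing of finitely many integer-valued linear functionals on the coset, and in type $A$ these functionals are surjective enough onto $\mathbb Z$ that a generic coset representative avoids them all.

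\textbf{Main obstacle.} The genuinely type-$A$-specific — and therefore delicate — step is the last one: producing, within the prescribed $\Xi$-coset, a representative $\mu$ with $W_\mu=W'_\mu=\{e\}$ (equivalently, a \emph{regular} integral weight for the sub-root-system $W_\lambda$) while simultaneously controlling $W_{\tilde\lambda}$ itself. One has to check that the congruence conditions forced by ``$\mu\in\lambda+\Xi$'' are compatible with the strict inequalities ``$\langle\mu+\rho,\alpha^\vee\rangle\neq0$ for all $\alpha$'', and this compatibility can fail in other types (it is exactly the phenomenon illustrated by the even weights of $\mathfrak{sl}_2$ in the paragraph preceding the lemma, where $\Xi=2\mathbb Z$ and the unique singular weight $-1$ is \emph{not} reachable — so niceness fails — but here $\mathfrak{sl}_2$ is type $A_1$ and the odd coset \emph{is} nice, which is the hint that the type-$A$ root lattice is ``fine enough''). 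Making this precise amounts to a short computation with the type-$A$ Cartan matrix / the standard realisation of roots as $e_i-e_j$, showing that the image of $\Xi$ in the relevant quotient still separates points away from the singular locus; I would organise this by reducing to each irreducible component of $W_\lambda$ (a smaller type-$A$ Weyl group) and handling it by the explicit $\epsilon$-coordinate description, where regularity is just ``all coordinates distinct'' and $\Xi$-cosets are easy to describe. Everything else in the lemma is, as indicated above, formal once the right $\nu$ and the right $\Xi$-shift have been chosen.
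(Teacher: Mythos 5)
Your proposal founders on the definition of \emph{nice}: in the paper a dominant weight $\lambda$ is nice if some $\mu\in\lambda+\Xi$ satisfies $W_\mu=W'_\mu$, i.e.\ the dot-stabiliser of $\mu$ is \emph{all} of its integral Weyl group, so the coset must contain a \emph{fully singular} weight. Since the integral Weyl group is constant on $\lambda+\Lambda$, the condition $W_\mu=W'_\mu=\{e\}$ that you aim for can only hold when $W_\lambda$ is already trivial (the case the paper dismisses as obvious); in general you are trying to produce a \emph{regular} representative of the coset, which is the opposite of what is required. The $\mathfrak{sl}_2$ example you cite already shows this: the odd coset is nice because it contains the singular weight $-1=-\rho$, not because it contains regular weights (both cosets do). Consequently the last step of your plan --- perturbing by elements of $\Xi$ supported on the simple roots inside $W_\lambda$ so as to push off the walls --- is aimed at the wrong target, and in any case a $\Xi$-translation does not change the $\Xi$-coset, so it cannot affect niceness at all.

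There is a second, related gap: your construction $\tilde\lambda=\lambda+N\nu$ with $\nu$ orthogonal to the roots of $W_\lambda$ does preserve $W_\lambda$ and $W'_\lambda$ and gives a $W_\lambda$-dominant integral difference, but it provides no control over whether $\tilde\lambda+\Xi$ meets the fully singular locus of $W_\lambda$ (for integral weights that locus is the single point $-\rho$). The paper's mechanism is different and essential: in the integral case one takes $\tilde\lambda=D(\lambda+\rho)-\rho$, with $D$ the determinant of the Cartan matrix, so that $\tilde\lambda\in-\rho+\Xi$ by construction (hence nice), while the dilation $\lambda+\rho\mapsto D(\lambda+\rho)$ commutes with the linear $W$-action and therefore preserves the stabiliser, and the difference $(D-1)(\lambda+\rho)$ is dominant. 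The non-integral case is then reduced to the integral case for the subalgebra $\mathfrak{g}(\lambda)$ attached to $W_\lambda$, and it is here --- in the surjectivity of the restriction map from $\lambda+\Lambda$ onto $\mathrm{Res}_\lambda(\lambda)+\Lambda(\lambda)$, which rests on the behaviour of the natural module in type $A$ --- that the type $A$ hypothesis enters, not in the lattice-theoretic genericity argument you sketch.
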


\begin{proof}
We first  prove the claim under the assumption that $\lambda$
is integral. The weight $-\rho$ is the only integral fully singular weight,
so we need to look for $\tilde{\lambda}$ inside $-\rho+\Xi$.
Let $D$ be the absolute value of the determinant of 
the Cartan matrix of $\mathfrak{g}$. Set
$\tilde{\lambda}=D(\lambda+\rho)-\rho$.
Since the $D$-multiples of the fundamental weights belong to
$\Xi$, it follows that $\tilde{\lambda}\in-\rho+\Xi$.

Then $\tilde{\lambda}-\lambda=(D-1)(\lambda+\rho)$ which is dominant.
Both $\lambda$ and $\tilde{\lambda}$ are integral and hence, for both
of them, the integral Weyl group is just the whole Weyl group.
Finally, the stabilizers of $\tilde{\lambda}$ and $\lambda$
in $W$ with respect to the dot action coincide because, after the
shift by $\rho$, the dot-action becomes the usual action and this
commutes with multiplication by scalars. This proves the claim
for integral weights.

Take now any $\lambda$ and assume $W_\lambda\neq \{e\}$,
for otherwise the claim is clear. Let $\mathbf{R}_\lambda$ be the root
subsystem of $\mathbf{R}$ corresponding to $W_\lambda$.
Let $\mathfrak{g}(\lambda)$  be the corresponding Lie subalgebra
of $\mathfrak{g}$.  Let $\Lambda(\lambda)$ be the set of all integral weights for
$\mathfrak{g}(\lambda)$ and $\Xi(\lambda)$ the set of all 
integral linear combinations of roots for $\mathfrak{g}(\lambda)$.
Choose some representatives $\mu_1=0,\mu_2,\dots,\mu_k$
of the cosets in $\Lambda(\lambda)/\Xi(\lambda)$.
Also, denote by $\mathfrak{h}_\lambda$ the intersection of
$\mathfrak{h}$  with $\mathfrak{g}(\lambda)$. Define
$\mathfrak{h}_\lambda^\perp$ as the set of all 
$h\in \mathfrak{h}$ such that $\alpha(h)=0$, for any root
$\alpha$ of $\mathfrak{g}(\lambda)$. Then
$\mathfrak{h}=\mathfrak{h}_\lambda\oplus \mathfrak{h}_\lambda^\perp$.
The inclusion $\mathfrak{h}_\lambda\hookrightarrow \mathfrak{h}$
induces the restriction map $\mathrm{Res}_\lambda:\mathfrak{h}^*\to \mathfrak{h}_\lambda^*$.

The restriction of the natural 
$\mathfrak{g}$-module (i.e.  the module $\mathbb{C}^n$
for $\mathfrak{sl}_n$) to any simple summand $\mathfrak{a}$
of $\mathfrak{g}(\lambda)$
gives the direct sum of the natural module for $\mathfrak{a}$
with a summand on which $\mathfrak{a}$ acts trivially. 
Recall that the natural module generates the category of all 
finite dimensional modules as an idempotent split monoidal category.
This implies that
\begin{equation}\label{eq-res-5}
\mathrm{Res}_\lambda(\lambda+\rho+\Lambda)=
\mathrm{Res}_\lambda(\lambda)+ \Lambda(\lambda)+\rho_\lambda,
\end{equation}
where $\rho_\lambda$ is the half of the sum of all  positive roots for
$\mathfrak{g}(\lambda)$. We note that both $\rho$ and $\rho_\lambda$
are integral weights, so they can be removed from \eqref{eq-res-5}.
In particular,  we can pick some representatives
$\nu_1=\lambda,\nu_2,\dots,\nu_k$ in $\lambda+\Lambda$
such that $\mathrm{Res}_\lambda(\nu_i+\rho-\lambda)=\mu_i+\rho_\lambda$.

We can now apply the already proved assertion of the lemma in the integral case
to $\mathrm{Res}_\lambda(\lambda)$ to obtain the corresponding nice dominant integral weight
$\widetilde{\mathrm{Res}_\lambda(\lambda)}$ for $\mathfrak{g}(\lambda)$
which satisfies \eqref{lem-nice.1}, \eqref{lem-nice.2} and \eqref{lem-nice.3}
(with respect to $\mathrm{Res}_\lambda(\lambda+\rho)-\rho_\lambda$ for $\mathfrak{g}(\lambda)$).
By \eqref{eq-res-5}, there is $\tilde{\lambda}\in  \lambda+\Lambda$
such that $\mathrm{Res}_\lambda(\tilde{\lambda}+\rho)-\rho_\lambda=\widetilde{\mathrm{Res}_\lambda(\lambda)}$.
The fact that  $\tilde{\lambda}$ is nice dominant and  satisfies \eqref{lem-nice.1}, \eqref{lem-nice.2}
and \eqref{lem-nice.3} follows  from the fact that 
$\widetilde{\mathrm{Res}_\lambda(\lambda)}$ is nice dominant and 
satisfies \eqref{lem-nice.1}, \eqref{lem-nice.2} and \eqref{lem-nice.3}.
\end{proof}

\begin{example}\label{ex-new-lem20}
{\em
Consider $\mathfrak{g}=\mathfrak{sl}_3$ 
with $\mathbf{R}=\{\pm\alpha,\pm\beta,\pm(\alpha+\beta)\}$.
With respect to the standard basis, we then have
\begin{displaymath}
\alpha=\left(\begin{array}{c}2\\-1\end{array}\right),\quad
\beta=\left(\begin{array}{c}-1\\2\end{array}\right),\quad
\alpha+\beta=\rho=\left(\begin{array}{c}1\\1\end{array}\right).
\end{displaymath}
Consider the weight
$\lambda=\left(\begin{array}{c}x-1/2\\-x-1/2\end{array}\right)$,  for some 
irrational $x$. Then we  have $\mathbf{R}_\lambda=\{\pm(\alpha+\beta)\}$
and  $\rho_\lambda=(1)$. This yields 
$\mathrm{Res}_\lambda(\lambda+\rho)-\rho_\lambda=0$, 
which is not a nice $\mathfrak{sl}_2$-weight.
Therefore $\lambda$ is not nice. 

To get a nice weight,
we  add to $\lambda$ the integral weight 
$\lambda=\left(\begin{array}{c}1\\0\end{array}\right)$
resulting in the weight 
$\tilde{\lambda}=\left(\begin{array}{c}x+1/2\\-x-1/2\end{array}\right)$.
We have
$\mathrm{Res}_\lambda(\tilde{\lambda}+\rho)-\rho_\lambda=1$, 
which is a nice $\mathfrak{sl}_2$-weight. Clearly,
the conditions \eqref{lem-nice.1}, \eqref{lem-nice.2} and \eqref{lem-nice.3}
of Lemma~\ref{lem-nice} are satisfied for this $\tilde{\lambda}$.

One can also easily find a singular weight  in $\tilde{\lambda}+\Xi$.
For example, the weight
$\tilde{\lambda}-(\alpha+\beta)=\left(\begin{array}{c}x-1/2\\-x-3/2\end{array}\right)$
is singular.
One checks  that $\mathrm{Res}_\lambda({}_-+\rho)-\rho_\lambda$
maps this weight to $-1$,  namely, to the unique singular  $\mathfrak{sl}_2$-weight.
\hfill\qed
}
\end{example}

One consequence of the above lemma is that, 
if $\lambda$ is nice, then $\lambda+\Xi$ contains dominant weights
of arbitrary singularity in $W_{\lambda}$.

If $\lambda$ and $\tilde{\lambda}$ are as above, then
$\theta_{\lambda,\tilde{\lambda}}$ and
$\theta_{\tilde{\lambda},\lambda}$ are mutually inverse
equivalences of categories, both at the level of category
$\mathscr{O}$ and at the level of category $\mathscr{M}$.
In particular, these equivalences send simple objects to
simple objects. Consequently, for any simple 
$\mathfrak{g}$-module $L$ in $\mathcal{M}_\chi$,
the categories $\mathrm{add}(\mathscr{P}L)$
and $\mathrm{add}(\mathscr{P}\theta_{\lambda,\tilde{\lambda}}(L))$
coincide (in the sense that they have the same
objects and morphisms).

Since both Conjectures~\ref{conj7} and \ref{conj7-2}
are formulated in terms of $\mathrm{add}(\mathscr{P}L)$,
it follows that it is enough to prove them for simple
modules with nice central characters.

\subsection{Reduction to singular blocks}\label{s4.1}
%

Let $L$ be a simple $\mathfrak{g}$-module, $\chi$ be the central character
of $L$, which we assume to be nice, 
and $\lambda\in\mathfrak{h}^*$ be some weight such that
$\mathrm{Ann}_{U(\mathfrak{g})}(L)=\mathrm{Ann}_{U(\mathfrak{g})}(L(\lambda))$.
We have the bicategory $\mathscr{P}_\chi$ of all projective
endofunctors of $\mathscr{M}_\chi$.

Consider the integral Weyl group $W_\lambda$ of $\lambda$.
Then $W_\lambda$ is a product of symmetric groups. Let us start by recalling 
special features of Kazhdan-Lusztig combinatorics in type $A$.
Thanks to \cite[Theorem~1.4]{KL}, in type $A$, left and right cells of
$\mathscr{P}^\Xi$ can be described using the Robinson-Schensted correspondence
which associates to a permutation $w\in S_n$ a pair 
of standard Young tableaux of the same shape (which is a partition of $n$). 
The latter shape determines the two-sided cell. One special type $A$ feature
is that each two-sided  cell $\mathcal{J}$ of 
$\mathscr{P}^\Xi$ in type $A$ contains the longest element
$\theta_{w_0^{W'}}$ in some parabolic subgroup $W'$ of $W_\lambda$. Another special
feature is that the intersection of any left and any right cell in $W_\lambda$
is a singleton. This means that $\mathcal{J}$ contains the identity functor
$\theta_e^{\chi'}$ on some $\mathscr{M}_{\chi'}$ 
(a singular block for which $W'$ is the 
dot-stabilizer of the dominant weight for that block) 
and this functor is the only projective endofunctor of
$\mathscr{M}_{\chi'}$ belonging to the intersection of the cell $\mathcal{J}$
with the homogeneous component $\mathscr{P}_\chi^\Xi$.
From Lemma~\ref{lem-n853} below it follows that inclusion gives rise to
a bijection between the left (right, two-sided) cells of $\mathscr{P}^\Xi$
and the corresponding cells of $\mathscr{P}$.

The elements in $\mathcal{J}$ that do not annihilate $L$ 
are exactly the elements of the left cell which is  adjoint
to the right cell which corresponds to the annihilator of $L$,
see \cite[Lemma~12]{MM1}. Each of
these left cells contains an element with target $\mathscr{M}_{\chi'}$.
We choose one of those, call it $\theta$.  Then $\theta L\neq 0$ and we 
can let $L'\in \mathscr{M}_{\chi'}$ be any simple quotient of $\theta L$.
Since $\chi$ is assumed to be nice, $\theta$ is homogeneous of degree $\Xi$.

We note that, by  construction, the identity projective 
functor $\theta_e^{\chi'}$ on $\mathscr{M}_{\chi'}$ is the only 
indecomposable projective  endofunctor of $\mathscr{M}_{\chi'}$ 
that belongs to $\mathscr{P}_\chi^\Xi$ and does not annihilate $L'$.

Consider the annihilator $\mathbf{I}:=\mathrm{Ann}_{U(\mathfrak{g})}(L')$
of $L'$ in $U(\mathfrak{g})$. Let $\mathscr{M}^{\mathbf{I}}_{\chi'}$
denote the full subcategory of $\mathscr{M}_{\chi'}$ which consists of
all objects on which $\mathbf{I}$ acts locally nilpotently. Then 
$\theta_e^{\chi'}$ is still the identity endofunctor 
of $\mathscr{M}^{\mathbf{I}}_{\chi'}$ and it does not annihilate
$L'$.

We want to answer the following question: What are the other projective  
endofunctors of $\mathscr{M}^{\mathbf{I}}_{\chi'}$ 
that do not annihilate $L'$? 

Choose some $\lambda'$ such that $\mathrm{Ann}_{U(\mathfrak{g})}(L')=
\mathrm{Ann}_{U(\mathfrak{g})}(L( \lambda'))$, which is possible 
due to Duflo's Theorem, see \cite{Du}. Consider $W\cdot\lambda'$ and its
intersections with all $\Xi$-cosets in $\lambda'+\Lambda$. Those cosets
in $\Lambda/\Xi$ for which the intersection is non-trivial form a subgroup of
the cyclic group $\Lambda/\Xi$. Let $\mu_1$, $\mu_2$,\dots, $\mu_k$
be the dominant weights in all the corresponding non-empty intersections
(of $W\cdot\lambda'$ with the $\Xi$-cosets in $\lambda'+\Lambda$).
Without loss of generality, we may assume $\lambda'\in W_{\mu_1}\cdot\mu_1$.
We have, $\theta_e^{\chi'}=\theta_{\mu_1,\mu_1}$.

The integral Weyl groups $W_{\mu_i}$ are all conjugate and so are
the stabilizers of the corresponding dominant weights in these
$W_{\mu_i}$, see also \cite[Remark~3.5]{Hu}. In particular, 
from Soergel's combinatorial description it follows that 
all the corresponding indecomposable blocks 
$\mathscr{O}_{\mu_i}$ 
(see Subsection~\ref{s2.2}) of category $\mathscr{O}$ are equivalent, see 
also \cite[Lemma~A.3]{Mat} for an alternative argument.
In particular, all $L(\mu_i)$ have the same Gelfand-Kirillov dimension,
see Subsection~\ref{s2.65}.

Since our $\mu_i$ might be singular, the category $\mathscr{O}_{\mu_i}$
might contain some other simple highest weight modules $L(\nu)$ with 
the same Gelfand-Kirillov dimension as $L(\mu_i)$. In this case 
we will write $\nu\sim\mu_i$.

\begin{lemma}\label{lem-n853}
Each indecomposable projective functor that does not annihilate $L'$ 
is of the form $\theta_{\mu_1,\nu}$, where $\nu\sim \mu_i$, for some $i$,
and $\nu$ is anti-dominant with respect to the dot-stabilizer of 
$\mu_1$. 
Moreover, each such $\theta_{\mu_1,\nu}$ is a
self-equivalence of $\mathscr{M}^{\mathbf{I}}_{\chi'}$
(but not necessarily of $\mathscr{M}_{\chi'}$).
\end{lemma}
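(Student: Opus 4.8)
The plan is to analyze projective endofunctors of $\mathscr{M}_{\chi'}$ by pulling everything back to the bicategory $\mathscr{P}^\Xi$ and using Soergel-theoretic combinatorics together with the singular Kazhdan-Lusztig cell structure in type $A$. First I would recall that, by the classification of projective functors in Subsection~\ref{s2.3}, every indecomposable projective endofunctor of $\mathscr{M}_{\chi'}$ is of the form $\theta_{\mu_1,\nu}$ for $\nu\in W\cdot\lambda'$ anti-dominant with respect to the dot-stabilizer of $\mu_1$; this already has the stated shape once we know which $\nu$ survive. So the content is (i) pinning down for which $\nu$ one has $\theta_{\mu_1,\nu}L'\neq 0$, and (ii) showing that those survivors are self-equivalences of the annihilator-truncated category $\mathscr{M}^{\mathbf{I}}_{\chi'}$.

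For (i): by the cell combinatorics recalled in Subsection~\ref{s2.65} and the KL-cell/projective-functor dictionary of Subsection~\ref{s3.1}, an indecomposable $\theta$ does not annihilate $L'$ only if $\theta\leq_J\mathcal{J}'$, where $\mathcal{J}'$ is the two-sided cell attached to $\mathrm{Ann}_{U(\mathfrak{g})}(L')$. Since $L'$ was built so that $\theta^{\chi'}_e=\theta_{\mu_1,\mu_1}$ is the unique indecomposable endofunctor of $\mathscr{M}_{\chi'}$ lying in $\mathscr{P}_\chi^\Xi\cap\mathcal{J}'$, and since in type $A$ the intersection of a left and a right cell is a singleton, the two-sided cell $\mathcal{J}'$ meets $\mathscr{M}_{\chi'}$ only in the identity functor. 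I would then invoke the equivalences among the blocks $\mathscr{O}_{\mu_i}$ (all the $W_{\mu_i}$ being conjugate, with conjugate dot-stabilizers, via Soergel's combinatorial description and \cite[Remark~3.5]{Hu}, \cite[Lemma~A.3]{Mat}) to see that the endofunctors of $\mathscr{M}_{\chi'}$ not killing $L'$ are precisely those coming, across the grading components $\mathscr{P}^\xi$ with $\xi$ in the cyclic subgroup of $\Lambda/\Xi$ cut out by $W\cdot\lambda'$, from the identity functors on the equivalent singular blocks; translating back to weights these are exactly the $\theta_{\mu_1,\nu}$ with $\nu\sim\mu_i$ for some $i$ and $\nu$ anti-dominant for the dot-stabilizer of $\mu_1$. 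The condition $\nu\sim\mu_i$ (equal GK-dimension, i.e. $L(\nu)$ in the same two-sided cell as $L(\mu_i)$) is precisely the translation of ``$\theta_{\mu_1,\nu}\leq_J\mathcal{J}'$, does not annihilate $L'$''.

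For (ii): to show each surviving $\theta_{\mu_1,\nu}$ is a self-equivalence of $\mathscr{M}^{\mathbf{I}}_{\chi'}$, I would produce an explicit quasi-inverse, namely the biadjoint $\theta_{\nu,\mu_1}$, and check that the unit and counit of adjunction become isomorphisms after restricting to modules on which $\mathbf{I}=\mathrm{Ann}_{U(\mathfrak{g})}(L')$ acts locally nilpotently. The composite $\theta_{\nu,\mu_1}\theta_{\mu_1,\nu}$ decomposes, by the combinatorics of projective functors, into the identity plus functors indexed by elements strictly lower than $\mu_1$ in the relevant Bruhat/cell order (``defect'' terms); the point is that all these lower terms factor through blocks or through annihilators that strictly contain $\mathbf{I}$, hence vanish on $\mathscr{M}^{\mathbf{I}}_{\chi'}$ — this is where the singularity of the $\mu_i$ and the passage from $\mathscr{M}_{\chi'}$ to $\mathscr{M}^{\mathbf{I}}_{\chi'}$ is essential, and is exactly the mechanism by which translation onto/out of a wall becomes an equivalence on a suitably truncated category. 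Concretely one can run the standard ``translation to and from a wall'' argument of \cite[Ch.~8]{Hu}/\cite{Ja} relative to the subalgebra $U(\mathfrak{g})/\mathbf{I}$ and its semisimplification: the composite is the identity on the truncation because the correction terms have GK-dimension strictly smaller, hence are annihilated.

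The main obstacle I anticipate is (ii), and more precisely controlling the ``defect'' terms in $\theta_{\nu,\mu_1}\theta_{\mu_1,\nu}$: one must verify that \emph{every} indecomposable summand other than the identity is killed on $\mathscr{M}^{\mathbf{I}}_{\chi'}$, which requires knowing that such a summand lies strictly below the two-sided cell $\mathcal{J}'$ (so it kills $L'$ and, being in $\mathscr{P}^\Xi$ up to a grading twist landing in a different block, kills everything with annihilator exactly $\mathbf{I}$). This in turn leans on the type $A$ fact that left-cell $\cap$ right-cell is a singleton, ensuring no ``unexpected'' summand inside the same two-sided cell survives. The GK-dimension additivity and Bernstein-number bookkeeping from Subsection~\ref{s2.6} provide the clean way to phrase ``strictly lower cell $\Rightarrow$ annihilated on the truncation'', so I would lean on that rather than on delicate explicit tableau computations.
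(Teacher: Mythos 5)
Your part (i) is essentially the paper's argument (vanishing of $\theta L'$ depends only on $\mathrm{Ann}(L')=\mathrm{Ann}(L(\lambda'))$, projective functors cannot raise GK-dimension, and all $L(\mu_i)$ share the same GK-dimension), just dressed in cell language; that part is fine. The gap is in part (ii). Knowing that every non-identity indecomposable summand of $\theta_{\mu_1,\nu}^*\circ\theta_{\mu_1,\nu}$ lies strictly above $\mathcal{J}$ in the two-sided order (your singleton-of-left-cell-$\cap$-right-cell argument) only tells you that the restriction of this composite to $\mathscr{M}^{\mathbf{I}}_{\chi'}$ is $\mathrm{id}^{\oplus k}$ for some $k\geq 1$; it does \emph{not} give $k=1$, and without $k=1$ you do not get an equivalence. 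Multiplicities $k>1$ genuinely occur for composites of translations involving walls: translation onto and off an $s$-wall composes to $\mathrm{id}^{\oplus |W'|}$, and already for $\mathfrak{sl}_3$ the composite $\theta^*\theta$ relevant here has $\Delta(\mu_1)$ occurring twice in the Verma flag of $\theta^*\theta\Delta(\mu_1)$ — it is only as a \emph{direct summand} that $P(\mu_1)$ occurs once, and that requires proof. Since $k=[\theta\,L(\mu_1):L(\nu)]$ by adjunction, the missing content is precisely the statement $\theta_{\mu_1,\nu}L(\mu_1)\cong L(\nu)$ (equivalently $\theta_{\mu_1,\nu}^*L(\nu)\cong L(\mu_1)$).

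This is exactly where the paper spends its effort: one shows that the unique (up to scalar) non-zero map $\Delta(\mu_1)\to\theta_{\mu_1,\nu}^*L(\nu)$ has image $L(\mu_1)$, by observing that otherwise the socle of the image would contain some $L(\nu')$ with $\nu'\sim\mu_1$, $\nu'\neq\mu_1$; self-duality of $\theta_{\mu_1,\nu}^*L(\nu)$ then puts $L(\nu')$ in the top, forcing $P(\nu')$ to be a summand of $\theta^*\theta\Delta(\mu_1)$, which is impossible since $\theta_{\mu_1,\mu_1}$ is the only projective endofunctor of $\mathscr{O}_{\mu_1}$ not killing $L(\mu_1)$. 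Projectivity of $\Delta(\mu_1)$ plus self-duality then yield that $L(\mu_1)$ is a direct summand of $\theta_{\mu_1,\nu}^*L(\nu)$ of multiplicity one and that nothing else can sit in its top or socle, whence $\theta_{\mu_1,\nu}^*L(\nu)=L(\mu_1)$. Your proposal contains no substitute for this step; the phrase "the correction terms have GK-dimension strictly smaller, hence are annihilated" addresses only the non-identity summands (and is loosely phrased: the functors do not have a GK-dimension, and in the paper's conventions these summands are strictly \emph{greater} in $\geq_J$, which is why they kill everything whose annihilator corresponds to $\mathcal{J}$). You need to add the argument pinning down the multiplicity of the identity summand.
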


\begin{proof}
We have $\theta_{\mu_1,\nu} L'\neq 0$ if and only if 
$\theta_{\mu_1,\nu} L(\lambda')\neq 0$, by our choice of $\lambda'$.
Since projective functors cannot increase the Gelfand-Kirillov dimension
and all $L(\mu_i)$ have the same Gelfand-Kirillov dimension,
$\theta_{\mu_1,\nu} L'\neq 0$ implies $\nu\sim \mu_i$, for some $i$,
by our definition of $\sim$. From the classification of projective
functors, we may also assume that $\nu$ is anti-dominant with 
respect to the dot-stabilizer of $\mu_1$. It remains to argue that 
any such $\theta_{\mu_1,\nu}$ is an equivalence.

Let $\theta_{\mu_1,\nu}^*$ denote the biadjoint of $\theta_{\mu_1,\nu}$.
We have $\theta_{\mu_1,\nu} P(\mu_1)=\theta_{\mu_1,\nu} \Delta(\mu_1)=P(\nu)$, 
by the classification of projective functors. In particular,
\begin{displaymath}
\dim\mathrm{Hom}_\mathfrak{g}(\theta_{\mu_1,\nu} \Delta(\mu_1),L(\nu))=1. 
\end{displaymath}
By adjunction, we thus have
\begin{displaymath}
\dim\mathrm{Hom}_\mathfrak{g}(\Delta(\mu_1),\theta_{\mu_1,\nu}^*L(\nu))=1. 
\end{displaymath}
Note that $L(\mu_1)$ is the simple top of $\Delta(\mu_1)$ and it appears
in $\Delta(\mu_1)$ with multiplicity one.

Assume that the image of a unique (up to scalar) non-zero map from 
$\Delta(\mu_1)$ to $\theta_{\mu_1,\nu}^*L(\nu)$ is not isomorphic to 
$L(\mu_1)$. Then the socle of this image contains some $L(\nu')$,
where $\nu'\sim\mu_1$ and $\nu'\neq\mu_1$. As $\theta_{\mu_1,\nu}^*L(\nu)$
is self-dual
(since projective functors commute with the duality 
on $\mathscr{O}$), $L(\nu')$ also appears in its top. This means that the
composition $\theta_{\mu_1,\nu}^*\circ \theta_{\mu_1,\nu}$ applied
to $\Delta(\mu_1)$ has $P(\nu')$ as a summand. However, this is not 
possible as $\nu'\sim\mu_1$ and $\theta_{\mu_1,\mu_1}$ is the only
projective endofunctor of $\mathscr{O}_{\mu_1}$ that does not kill
$L(\mu_1)$.

From the previous paragraph, we have that the image of a unique 
(up to scalar) non-zero map from  $\Delta(\mu_1)$ to $\theta_{\mu_1,\nu}^*L(\nu)$ 
is isomorphic to  $L(\mu_1)$, in particular, $L(\mu_1)$ appears in
the socle of $\theta_{\mu_1,\nu}^*L(\nu)$. As
$\theta_{\mu_1,\nu}^*L(\nu)$ is self-dual, 
$L(\mu_1)$ appears in
the top of $\theta_{\mu_1,\nu}^*L(\nu)$ as well. Since  $\Delta(\mu_1)$ is 
projective and the map from it to $\theta_{\mu_1,\nu}^*L(\nu)$ is unique,
the multiplicity of $L(\mu_1)$ in $\theta_{\mu_1,\nu}^*L(\nu)$ is one. 
Consequently, $L(\mu_1)$ is a direct summand of $\theta_{\mu_1,\nu}^*L(\nu)$.
As, by the previous paragraph, no other $L(\nu')$ with $\nu'\sim\mu_1$
are allowed to appear in the top or socle of $\theta_{\mu_1,\nu}^*L(\nu)$,
we have $\theta_{\mu_1,\nu}^*L(\nu)=L(\mu_1)$. By adjunction,
$\theta_{\mu_1,\nu}L(\mu_1)=L(\nu)$.

This implies that
\begin{displaymath}
\theta_{\mu_1,\nu}^*\theta_{\mu_1,\nu}=
\theta_{\mu_1,\nu}\theta_{\mu_1,\nu}^*=
\theta_{\mu_1,\mu_1}
\end{displaymath}
as endofunctors of $\mathscr{M}^{\mathbf{I}}_{\chi'}$ and completes the proof.
\end{proof}

\begin{example}\label{ex-new-sl_2}
{\em
For $\mathfrak{g}=\mathfrak{sl}_2$, consider $\lambda=(-1/2)$. 
Then $W\cdot\lambda=\{-1/2,-3/2\}$.
Both latter weights are dominant with respect to  
their integral Weyl group, which is trivial. However, the difference 
between these two  weights is an integral
weight. Therefore we have two indecomposable projective 
functors that do not annihilate $L(-1/2)$, namely, the
identity functor $\theta_{-1/2,-1/2}$ and the equivalence 
$\theta_{-1/2,-3/2}$ between $\mathscr{O}_{-1/2}$
and $\mathscr{O}_{-3/2}$.\hfill\qed
} 
\end{example}

\subsection{Proof of Conjecture~\ref{conj7-2} in type $A$}\label{s4.2}

From the construction in the previous subsection, it follows that 
$\mathcal{X}_L\cap \mathrm{Irr}(\mathfrak{g})_{\chi'}$
consists of modules of the form $\theta_{\mu,\mu'}(L')$,
where $\theta_{\mu,\mu'}$ are equivalences. This set is,
clearly, one equivalence class with respect to $\triangleright$. 
Therefore the claim of 
Conjecture~\ref{conj7-2} follows from Theorem~\ref{thm7-9}.

\subsection{Proof of Conjecture~\ref{conj7} in type $A$}\label{s4.3}

We now establish a crucial property of the module $L'$
constructed in Subsection~\ref{s4.1}, namely, its weak Kostant positivity
(see Subsection~\ref{s2.4}):

\begin{lemma}\label{lem08}
The module $L'$ is weakly Kostant positive. 
\end{lemma}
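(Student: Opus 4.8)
The plan is to deduce the lemma from the uniqueness property of $\theta_e^{\chi'}$ established in Subsection~\ref{s4.1} by a multiplicity count, after first reducing it to the weak Kostant positivity of a suitable simple highest weight module. Recall from Subsection~\ref{s2.4} that weak Kostant positivity of $L'$ means that the canonical injection of $U(\mathfrak{g})/\mathbf{I}$ into $\mathcal{L}(L',L')^\Xi$, where $\mathbf{I}=\mathrm{Ann}_{U(\mathfrak{g})}(L')$, is an isomorphism. Both sides are Harish--Chandra bimodules, so their adjoint $\mathfrak{g}$-actions are semi-simple with finite multiplicities concentrated in $\Xi$; hence it suffices to check that $[\mathcal{L}(L',L')^\Xi:V]\leq[U(\mathfrak{g})/\mathbf{I}:V]$ for every simple finite dimensional $\mathfrak{g}$-module $V$ whose support lies in $\Xi$ (the reverse inequality being automatic from the injection).

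For the left hand side I would use the adjunction $\mathrm{Hom}_{\mathfrak{g}}(V,\mathcal{L}(M,N))\cong\mathrm{Hom}_{\mathfrak{g}}(V\otimes_{\mathbb{C}}M,N)$, valid for any $\mathfrak{g}$-modules $M,N$ and any finite dimensional $V$ since a $\mathfrak{g}$-homomorphism out of $V$ has finite dimensional, hence ad-finite, image in $\mathrm{Hom}_{\mathbb{C}}(M,N)$. For simple $V$ this gives $[\mathcal{L}(L',L')^\Xi:V]=\dim\mathrm{Hom}_{\mathfrak{g}}(V\otimes_{\mathbb{C}}L',L')$. Since the support of $V$ lies in $\Xi$, the restriction of $V\otimes_{\mathbb{C}}{}_-$ to $\mathscr{M}_{\chi'}$ is a $\Xi$-homogeneous projective endofunctor, that is, a direct sum of indecomposable $\Xi$-homogeneous projective endofunctors of $\mathscr{M}_{\chi'}$; by the construction in Subsection~\ref{s4.1}, the only such summand that does not annihilate $L'$ is the identity functor $\theta_e^{\chi'}$. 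Hence $V\otimes_{\mathbb{C}}L'\cong(L')^{\oplus m_V}$, where $m_V$ is the multiplicity of $\theta_e^{\chi'}$ as a summand of $V\otimes_{\mathbb{C}}{}_-$ on $\mathscr{M}_{\chi'}$, and therefore $[\mathcal{L}(L',L')^\Xi:V]=m_V$.

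The number $m_V$ depends on $L'$ only through $\mathbf{I}$, because whether a projective functor annihilates a module depends only on its annihilator. Running the same argument with $L'$ replaced by a simple highest weight module $L(\lambda')$ with $\mathrm{Ann}_{U(\mathfrak{g})}(L(\lambda'))=\mathbf{I}$ --- such $\lambda'$ exists by Duflo's theorem and, by the analysis of Subsection~\ref{s4.1}, may be taken inside the singular block $\mathscr{M}_{\chi'}$, whose dot-stabilizer is the parabolic subgroup $W'$ with $w_0^{W'}\in\mathcal{J}$ --- yields $[\mathcal{L}(L(\lambda'),L(\lambda'))^\Xi:V]=m_V$ as well (here $\mathcal{L}(L(\lambda'),L(\lambda'))$ is a Harish--Chandra bimodule since $L(\lambda')\in\mathscr{O}$). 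Consequently the statements ``$L'$ is weakly Kostant positive'' and ``$L(\lambda')$ is weakly Kostant positive'' are equivalent: each amounts to the equality $[U(\mathfrak{g})/\mathbf{I}:V]=m_V$ for all $V$ as above. Thus the lemma is reduced to the weak Kostant positivity of the highest weight module $L(\lambda')$.

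The main obstacle is exactly this last point. In type $A$ the two-sided cell $\mathcal{J}$ of $\mathbf{I}$ contains the longest element $w_0^{W'}$ of a parabolic subgroup, which is moreover the dot-stabilizer of the dominant weight of the block of $L(\lambda')$; in particular $w_0^{W'}$ is a Duflo involution of $\mathcal{J}$. For such ``parabolically labelled'' simple highest weight modules one has weak Kostant positivity: via Soergel's combinatorial description (Subsection~\ref{s2.5}) one identifies $\mathscr{O}_{\chi'}$ with modules over the algebra of $W'$-invariants in the coinvariant algebra, identifies the $\Xi$-homogeneous part of $\mathcal{L}(L(\lambda'),L(\lambda'))$ with the corresponding endomorphism Soergel bimodule, and reduces the claim to the statement that this endomorphism bimodule is the image of $U(\mathfrak{g})/\mathbf{I}$; alternatively one may argue through the coalgebra $1$-morphism attached to the Duflo involution of $\mathcal{J}$ (cf.\ Remark~\ref{rem-8-3} and \cite{MMMTZ2}), which at the object $\mathtt{i}_{\chi'}$ is the identity functor $\theta_e^{\chi'}$ precisely because, in type $A$, every left cell meets every right cell of $\mathscr{P}^\Xi$ in a single element. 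It is this last feature, together with the uniqueness of $\theta_e^{\chi'}$ from Subsection~\ref{s4.1} --- both genuinely type-$A$ phenomena --- that makes the argument work.
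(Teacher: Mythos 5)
Your reduction is sound and follows essentially the same route as the paper: both arguments compute $[\mathcal{L}(L',L')^\Xi:V]$ via the adjunction as the multiplicity $m_V$ of $\theta_e^{\chi'}$ in $V\otimes_{\mathbb{C}}{}_-$, observe that the same count applies to a simple highest weight module with annihilator $\mathbf{I}$, and thereby transfer the problem to category $\mathscr{O}$. The genuine gap is in your final step: the weak Kostant positivity of $L(\lambda')$ is exactly the remaining content of the lemma, and your justification of it is a sketch that never closes. The Soergel-bimodule version merely restates the claim (``reduces the claim to the statement that this endomorphism bimodule is the image of $U(\mathfrak{g})/\mathbf{I}$'' is the assertion to be proved, not a proof), and the coalgebra-$1$-morphism alternative is not connected to Kostant's problem by any argument. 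As it stands, the proposal proves an equivalence between two instances of weak Kostant positivity but establishes neither.

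What is missing is the following concrete two-step closing argument. First, one must show that $\lambda'$ can be taken to be the \emph{dominant} weight $\mu_1$ of the singular block, i.e.\ that $\mathrm{Ann}_{U(\mathfrak{g})}(L')=\mathrm{Ann}_{U(\mathfrak{g})}(L(\mu_1))$; this is not automatic from Duflo's theorem and requires an argument (in the paper: any non-identity $\Xi$-homogeneous indecomposable projective endofunctor of $\mathscr{O}_{\chi'}$ sends $P(\mu_1)$ to some $P(\nu)$ with $\mathrm{GKdim}\,L(\nu)>\mathrm{GKdim}\,L(\mu_1)$, hence kills $L(\mu_1)$ by \cite[Subsections~10.9, 10.11]{Ja}, so the annihilator of $L(\mu_1)$ corresponds to the right cell of $\theta_e^{\chi'}$, which is the right cell of $\mathbf{I}$). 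Second, once $\lambda'=\mu_1$ is dominant, $L(\mu_1)$ is the simple quotient of the dominant Verma module $\Delta(\mu_1)=P(\mu_1)$, which is projective, and simple quotients of projective Verma modules are Kostant positive by \cite[Satz~6.9]{Ja}. This citable fact, rather than any identification of Soergel bimodules or Duflo involutions, is what actually finishes the proof; combined with your (correct) multiplicity comparison it yields $U(\mathfrak{g})/\mathbf{I}\cong\mathcal{L}(L',L')^\Xi$.
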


\begin{proof}
Since we are discussing only the weak Kostant positivity
of $L'$ in this lemma, in the proof below we restrict our attention to 
indecomposable projective functors that are homogeneous of degree $\Xi$.
Recall that $\theta_e^{\chi'}$ is the only indecomposable projective 
endofunctor of $\mathscr{M}_{\chi'}$ that is homogeneous of degree $\Xi$
and does not annihilate $L'$, see Section~\ref{s2new}.

We start with the claim that $\lambda'$ may be assumed to be 
dominant (i.e. we may assume
$\lambda'=\mu_1$).
In other words, we claim that
$\mathrm{Ann}_{U(\mathfrak{g})}(L')=\mathrm{Ann}_{U(\mathfrak{g})}(L(\mu_1))$.
To prove this, we need to show that $L(\lambda')$ is annihilated by
all indecomposable projective endofunctors of $\mathscr{O}_{\chi'}$ 
that are not isomorphic to the identity functor.

Let $\theta$ be an indecomposable projective endofunctor of 
$\mathscr{M}_{\chi'}$, homogeneous of degree $\Xi$, 
which is not isomorphic to the identity functor.
Then $\theta P(\mu_1)\cong P(\nu)$, for some $\nu\neq \mu_1$.
Since $\theta$ is strictly bigger than $\mathcal{J}$ in the 
two-sided order, the Gelfand-Kirillov dimension of $L(\nu)$
is strictly greater than that of $L(\mu_1)$, see
\cite[Subsection~10.11]{Ja}. From \cite[Subsection~10.9]{Ja}
it then follows that $\theta L(\mu_1)=0$. Therefore 
the annihilator of $L(\mu_1)$ corresponds to a right
cell inside $\mathcal{J}$ and since the right cell of 
$\theta_e^{\chi'}$ is the only right cell that contains a
representative from
projective endofunctors of $\mathscr{O}_{\chi'}$ in 
$\mathcal{J}$, we obtain that the annihilator of $L(\mu_1)$
corresponds to the right cell of $\theta_e^{\chi'}$.
This is the same right cell which describes the annihilator of
$L'$, and we obtain our claim.

Now, assuming $\lambda'$ is dominant, 
$L(\lambda')$ is the quotient of a
projective Verma 
module in $\mathscr{O}_{\chi'}$. Hence it is Kostant positive,
see \cite[Subsection~6.9]{Ja}.
For a simple finite dimensional $V$, 
the multiplicity of $V$ in 
$\mathcal{L}(L(\lambda'),L(\lambda'))$ equals the dimension of
$\mathrm{Hom}_{\mathfrak{g}}(V\otimes_{\mathbb{C}}L(\lambda'),L(\lambda'))$.
We can write $V\otimes_{\mathbb{C}}{}_-$ as a direct sum of indecomposable projective
functors. The summands which go from $\mathscr{O}_{\chi'}$ to 
$\mathscr{O}_{\chi'}$ are either $\theta_e^{\chi'}$ or kill $L(\lambda')$.
Therefore the above multiplicity equals the multiplicity of 
$\theta_e^{\chi'}$ as a summand of $V\otimes_{\mathbb{C}} {}_-$.

The same computation works for $L'$, under the assumption that 
$0$ is a weight of $V$ (which is equivalent to saying that all
indecomposable projective functors that appear as summands
of $V\otimes_{\mathbb{C}}{}_-$ are homogeneous of degree $\Xi$), 
which is equivalent to saying that 
all indecomposable projective functors 
that are summands of ${}_-\otimes_{\mathbb{C}} V$ are homogeneous
of degree $\Xi$. Therefore, combining
\begin{displaymath}
U(\mathfrak{g})/\big(\mathrm{Ann}_{U(\mathfrak{g})}(L')\big)
\cong 
U(\mathfrak{g})/\big(\mathrm{Ann}_{U(\mathfrak{g})}(L(\lambda'))\big)
\end{displaymath}
with
\begin{displaymath}
U(\mathfrak{g})/\big(\mathrm{Ann}_{U(\mathfrak{g})}(L(\lambda'))\big)
\cong \mathcal{L}(L(\lambda'),L(\lambda')),
\end{displaymath}
then with 
\begin{displaymath}
U(\mathfrak{g})/\big(\mathrm{Ann}_{U(\mathfrak{g})}(L')\big)
\hookrightarrow
\mathcal{L}(L',L')^\Xi,
\end{displaymath}
and, finally, with
\begin{displaymath}
[\mathcal{L}(L(\lambda'),L(\lambda')):V]
= [\mathcal{L}(L',L')^\Xi:V],
\end{displaymath}
we obtain
\begin{displaymath}
U(\mathfrak{g})/\big(\mathrm{Ann}_{U(\mathfrak{g})}(L')\big)
\cong \mathcal{L}(L',L')^\Xi.
\end{displaymath}
This completes the proof.
\end{proof}

Denote by ${}^\Xi \mathbf{X}^{L'}$ and ${}^\Xi \mathbf{Y}^{L'}$ the
$\mathscr{P}^\Xi$-analogues of $\mathbf{X}^{L'}$ and $\mathbf{Y}^{L'}$,
respectively.

Since $L'$ is weakly Kostant positive  and the identity projective 
functor $\theta_e^{\chi'}$ on $\mathscr{M}_{\chi'}$ is the only 
indecomposable projective  endofunctor of $\mathscr{M}_{\chi'}$ 
that belongs to $\mathscr{P}_\chi^\Xi$ and does not annihilate $L'$, we can apply 
the adaptation \cite[Theorem~5]{KhM} of
\cite[Theorem~5.1]{MiSo}  and conclude that ${}^\Xi \mathbf{X}^{L'}$
is equivalent to a certain category of Harish-Chandra bimodules.
This equivalence is, in fact, a homomorphism of 
birepresentations of $\mathscr{P}^\Xi$.

We can also apply \cite[Theorem~5]{KhM} to
$L(\lambda)$ and conclude that ${}^\Xi \mathbf{X}^{L(\lambda)}$
is equivalent to the same  category of Harish-Chandra bimodules,
as birepresentations of $\mathscr{P}^\Xi$. In other words,
${}^\Xi \mathbf{X}^{L'}$ and ${}^\Xi \mathbf{X}^{L(\lambda)}$ are
equivalent as birepresentations of $\mathscr{P}^\Xi$.
This means that such an equivalence induces an equivalence
between ${}^\Xi \mathbf{Y}^{L'}$ and ${}^\Xi \mathbf{Y}^{L(\lambda)}$.

Since we already established 
in Subsection~\ref{s4.2} that the statement of 
Conjecture~\ref{conj7-2} holds for $\mathcal{X}_L$,
Theorem~\ref{thm7-11} implies that $\mathbf{Y}^{L}=
\mathbf{Y}^{L'}$. This means that ${}^\Xi \mathbf{Y}^{L}$
and ${}^\Xi \mathbf{Y}^{L(\lambda)}$ are equivalent as birepresentations
of $\mathscr{P}^\Xi$. It is easy to see that both $\mathbf{Y}^{L(\lambda)}$ 
and ${}^\Xi \mathbf{Y}^{L(\lambda)}$ are simple
transitive by combining \cite[Theorem~22]{MM1}, 
\cite[Proposition~22]{MM2} and \cite[Theorem~18]{MM16}.
Hence ${}^\Xi \mathbf{Y}^{L}$ is simple transitive.

The elements of $\mathscr{P}$ inside $\mathcal{J}$ 
are obtained from the elements of $\mathscr{P}^\Xi$
inside $\mathcal{J}$ by composing with the auto-equivalences
given by Lemma~\ref{lem-n853}. Hence $\mathbf{Y}^{L}$
is obtained from ${}^\Xi \mathbf{Y}^{L}$ by applying some 
equivalences. Since $\mathbf{Y}^{L}$ is transitive by 
construction, the fact that it is simple transitive 
follows from the simple transitivity of ${}^\Xi \mathbf{Y}^{L}$.
This completes the proof.

We remark that some of the arguments in this subsection,
in particular, the idea of reduction to a singular block,
are similar in spirit to the arguments given in the 
proof of \cite[Theorem~67]{MS08}.

\subsection{Some corollaries}\label{s4.4}

The equivalence between ${}^\Xi \mathbf{Y}^{L}$
and ${}^\Xi \mathbf{Y}^{L(\lambda)}$ established in the previous subsection
has the following consequence:

\begin{corollary}\label{cor-2.4.4.1}
Let $\theta$ be an indecomposable projective functor from  
$\mathcal{J}$ and $M$ a subquotient of
$V\otimes_{\mathbb{C}}L$ such that $\theta M\neq 0$.
Then $\mathrm{GKdim}(M)=\mathrm{GKdim}(L)$. 
\end{corollary}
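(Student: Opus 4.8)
The plan is to deduce Corollary~\ref{cor-2.4.4.1} from the equivalence, established in Subsection~\ref{s4.3}, between ${}^\Xi\mathbf{Y}^{L}$ and ${}^\Xi\mathbf{Y}^{L(\lambda)}$ as birepresentations of $\mathscr{P}^\Xi$, together with what is known about subquotients of $V\otimes_{\mathbb{C}}L(\lambda)$ in category $\mathscr{O}$. The subtlety is that $M$ is only a \emph{subquotient} of $V\otimes_{\mathbb{C}}L$, not a summand, so it need not itself live in $\mathbf{Y}^{L}$; we must pass to associated graded objects with respect to the Gelfand-Kirillov filtration.

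First I would reduce to the case that $M$ is simple. If $\theta M\neq 0$ for $\theta$ indecomposable in $\mathcal{J}$, then, since $\theta$ is exact, $\theta$ is non-zero on some simple subquotient $N$ of $M$; and if we show $\mathrm{GKdim}(N)=\mathrm{GKdim}(L)$ then, because $\mathrm{GKdim}(N)\le\mathrm{GKdim}(M)\le\mathrm{GKdim}(V\otimes_{\mathbb{C}}L)=\mathrm{GKdim}(L)$, we are done. So assume $M$ simple. Now recall from the discussion around $\mathbf{Y}^L$ in Subsection~\ref{s3.1} that $\mathrm{Ann}_{U(\mathfrak{g})}(L)$ corresponds to a right cell $\mathcal{R}$ contained in $\mathcal{J}$, and for \emph{any} indecomposable $\theta'$ with $\theta' L\neq 0$ we have $\theta'\le_J\mathcal{J}$. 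I would next observe that $M$ being a simple subquotient of $V\otimes_{\mathbb{C}}L$ on which an indecomposable functor $\theta\in\mathcal{J}$ does not vanish forces $\mathrm{GKdim}(M)\ge\mathrm{GKdim}(L)$: indeed, by adjunction $\theta^*$ is also indecomposable in $\mathcal{J}$ (the two-sided cell is closed under taking adjoints), and a non-zero map $\theta M\to X$ or $X\to\theta M$ together with $\mathrm{Hom}_{\mathfrak g}(\theta^* X, M)\neq 0$ relates $M$ to something in the cell $\mathcal{J}$ via projective functors; combined with the fact, recalled in Subsection~\ref{s2.65}, that the Gelfand-Kirillov dimension is constant on two-sided cells and that $\mathrm{GKdim}$ attached to $\mathcal{J}$ equals $\mathrm{GKdim}(L)$, one gets $\mathrm{GKdim}(M)\ge\mathrm{GKdim}(L)$. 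Hence $\mathrm{GKdim}(M)=\mathrm{GKdim}(L)$.

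The cleaner route, and the one I would actually write, uses the $\mathscr{P}^\Xi$-equivalence directly. By Subsection~\ref{s4.05} we may assume $\chi$ nice, and by Lemma~\ref{lem-n853} the passage between $\mathscr{P}$ and $\mathscr{P}^\Xi$ inside $\mathcal{J}$ is by self-equivalences, so it suffices to treat indecomposable $\theta$ that are homogeneous of degree $\Xi$. Under the equivalence ${}^\Xi\mathbf{X}^{L}\simeq {}^\Xi\mathbf{X}^{L(\lambda)}$ of Subsection~\ref{s4.3}, the module $V\otimes_{\mathbb{C}}L$ corresponds to the Harish-Chandra bimodule avatar of $V\otimes_{\mathbb{C}}L(\lambda)$, whose composition series in the appropriate Serre subquotient is governed by category $\mathscr{O}$; there, $V\otimes_{\mathbb{C}}L(\lambda)$ has finite length and every simple subquotient $L(\mu)$ with $\theta L(\mu)\neq 0$ for $\theta\in\mathcal{J}$ satisfies $\mathrm{GKdim}(L(\mu))=\mathrm{GKdim}(L(\lambda))$ by the standard cell combinatorics of Subsection~\ref{s2.65} (annihilators, hence Gelfand-Kirillov dimension, are constant on two-sided cells, and $\theta L(\mu)\neq 0$ with $\theta\in\mathcal{J}$ forces $L(\mu)$ to lie in that cell). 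Transporting back, any simple subquotient $M$ of $V\otimes_{\mathbb{C}}L$ not killed by such a $\theta$ has $\mathrm{GKdim}(M)=\mathrm{GKdim}(L(\lambda))=\mathrm{GKdim}(L)$.

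The main obstacle is bookkeeping the word ``subquotient'' through the equivalence: the equivalence of Subsection~\ref{s4.3} is an equivalence of additive (locally finitary) birepresentations built from \emph{summands} of $\theta L$, not an exact equivalence of abelian categories of $\mathfrak{g}$-modules, so one cannot literally say ``$M$ corresponds to a subquotient''. I would handle this by working with the split Grothendieck group of the birepresentation and the class $[V\otimes_{\mathbb C}L]$, using the additivity of the Bernstein number (Subsection~\ref{s2.6}) to control which simple modules of Gelfand-Kirillov dimension equal to $\mathrm{GKdim}(L)$ can occur: all such simple subquotients are $\triangleright$-related to $L$, hence lie in $\mathcal{X}_L$, and by Subsection~\ref{s4.2} they form a single $\triangleright$-class whose Gelfand-Kirillov dimension is $\mathrm{GKdim}(L)$; anything that a functor in $\mathcal{J}$ does not kill must be among these, because functors strictly above $\mathcal{J}$ kill $L$ and those strictly below $\mathcal{J}$ decrease $\mathrm{GKdim}$, contradicting $\theta M\neq 0$ for $\theta\in\mathcal{J}$. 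Assembling these observations yields the corollary.
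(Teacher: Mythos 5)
There is a genuine gap, and it occurs at the very first step: the reduction to simple $M$. You argue that since $\theta$ is exact, $\theta M\neq 0$ forces $\theta N\neq 0$ for some simple subquotient $N$ of $M$. This inference is only valid when $M$ has finite length. But $V\otimes_{\mathbb{C}}L$ need not be artinian (the paper cites \cite{Sta} for this), so a subquotient $M$ may have infinite length, and an exact functor can be non-zero on an infinite-length module while killing every one of its simple subquotients (compare the localization functor $\mathbb{C}(x)\otimes_{\mathbb{C}[x]}{}_-$ on $\mathbb{C}[x]$-modules, which the paper's definition of ``strange'' is explicitly modeled on). This is not a removable technicality: the whole point of Corollary~\ref{cor-2.4.4.2}, which is extracted from the proof of Corollary~\ref{cor-2.4.4.1}, is to control exactly those subquotients that have \emph{no} simple subquotient of Gelfand--Kirillov dimension $\mathrm{GKdim}(L)$, and even for the literal statement of Corollary~\ref{cor-2.4.4.1} one must exclude an infinite-length $M$ with $\mathrm{GKdim}(M)<\mathrm{GKdim}(L)$ all of whose simple subquotients are killed by $\theta$ while $\theta M\neq 0$. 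Your ``cleaner route'' does not repair this: transporting through the equivalence ${}^\Xi\mathbf{Y}^{L}\simeq{}^\Xi\mathbf{Y}^{L(\lambda)}$ and invoking cell combinatorics again only speaks about \emph{simple} subquotients, and the proposed bookkeeping via split Grothendieck groups and Bernstein numbers sees only finite-length information, so it cannot detect the problematic infinite-length/strange subquotients either.

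The paper's proof is built precisely to avoid this reduction. It works in the abelianization of the cell birepresentation generated by $L(\lambda)$, where the action of $\theta\in\mathcal{J}$ is tensoring with an explicit projective bimodule (\cite[Proposition~4.15]{MMMTZ}); applying this to a summand of $V\otimes_{\mathbb{C}}L(\lambda)$ containing $M$ as a subquotient produces a summand of some $V'\otimes_{\mathbb{C}}L(\lambda)$ having $M$ in its top. Transporting back via the equivalence, one concludes that $\theta M\neq 0$ forces $M$ to have a subquotient occurring in the top of some $V''\otimes_{\mathbb{C}}L$; such top constituents are simple with Gelfand--Kirillov dimension $\mathrm{GKdim}(L)$ by Lemma~\ref{lem-n4}, which gives the lower bound $\mathrm{GKdim}(M)\geq\mathrm{GKdim}(L)$ for an \emph{arbitrary} subquotient $M$. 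If you want to salvage your approach, you must replace the exactness argument by a mechanism of this kind that produces a simple constituent of maximal Gelfand--Kirillov dimension inside $M$ itself, rather than assuming one exists.
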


\begin{proof}
The point is that in $\mathbf{Y}^{L(\lambda)}$
any simple subquotient $M$ of any $V\otimes_{\mathbb{C}}L(\lambda)$
satisfying $\theta M\neq 0$ appears in the top  of
some $V'\otimes_{\mathbb{C}}L(\lambda)$. 
We know this because we understand the action of 
projective functors on category $\mathscr{O}$
quite well. Applying to $L(\lambda)$ projective functors
from $\mathcal{J}$ produces a cell birepresentation
of the bicategory of projective functors.
In the abelianization of this birepresentation,
the action of projective functors from $\mathcal{J}$
is given by tensoring with projective bimodules which are 
explicitly described in \cite[Proposition~4.15]{MMMTZ}.
Applying such a projective module does exactly what is 
claimed above: applied to a summand of
$V\otimes_{\mathbb{C}}L(\lambda)$ in which $M$
appears as a subquotient, it produces a module
with  a direct summand in which $M$ appears in the top.

Due to the 
equivalence between ${}^\Xi \mathbf{Y}^{L}$
and ${}^\Xi \mathbf{Y}^{L(\lambda)}$ and the fact that 
$\mathbf{Y}^{L}$ is obtained from ${}^\Xi \mathbf{Y}^{L}$
(resp. $\mathbf{Y}^{L(\lambda)}$ from ${}^\Xi \mathbf{Y}^{L(\lambda)}$)
using some equivalences of categories given by projective functors,
it follows that $M$ has  a subquotient which appears in the top  of
some $V''\otimes_{\mathbb{C}}L$. This implies
$\mathrm{GKdim}(M)=\mathrm{GKdim}(L)$. 
\end{proof}

Let $L$ be a simple $\mathfrak{g}$-module and $V$ a finite dimensional
$\mathfrak{g}$-module. A subquotient $M$ of $V\otimes_\mathbb{C}L$ will
be called {\em strange} provided that the following conditions are
satisfied:
\begin{itemize}
\item for any submodule $N\subset M$, exactly one of the modules
$N$ or $M/N$ has GK-dimension $\mathrm{GKdim}(L)$,
\item $M$ does not have any simple subquotient of 
GK-dimension $\mathrm{GKdim}(L)$.
\end{itemize}
This definition is inspired by the properties of the
regular $\mathbb{C}[x]$-module. This module and all its 
non-zero submodules have GK-dimension $1$,
while any quotient of this module by a non-zero 
submodule has GK-dimension $0$. The module itself does not
have any simple submodules.

\begin{corollary}\label{cor-2.4.4.2}
Let $\theta$ be an indecomposable projective functor from  
$\mathcal{J}$ and $M$ a strange subquotient of
$V\otimes_{\mathbb{C}}L$. Then $\theta M=0$. 
\end{corollary}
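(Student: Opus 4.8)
The plan is to argue by contradiction: suppose $\theta M\neq 0$ for some strange subquotient $M$ of $V\otimes_{\mathbb{C}}L$ and some indecomposable projective functor $\theta$ from the two-sided cell $\mathcal{J}$. Since $M$ is a subquotient of $V\otimes_{\mathbb{C}}L$, write $M=M_1/M_0$ with $M_0\subset M_1\subset V\otimes_{\mathbb{C}}L$. By exactness of projective functors, $\theta M=\theta M_1/\theta M_0$, and the hypothesis $\theta M\neq 0$ already places $M$ inside the relevant birepresentation $\mathbf{Y}^{L}$: more precisely, $M$ (as a subquotient of $V\otimes_{\mathbb{C}}L$ on which a $1$-morphism of $\mathcal{J}$ acts non-trivially) lives in the abelianization of $\mathbf{Y}^{L}$. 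The point is then to invoke Corollary~\ref{cor-2.4.4.1}, which under the hypothesis $\theta M\neq 0$ gives $\mathrm{GKdim}(M)=\mathrm{GKdim}(L)$; this only tells us $M$ has the right GK-dimension, which is consistent with being strange, so I need to extract more.

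The key step is to use the explicit description of the action of $\mathcal{J}$-functors on the cell birepresentation $\mathbf{Y}^{L(\lambda)}$, transported to $\mathbf{Y}^{L}$ via the equivalence ${}^\Xi\mathbf{Y}^{L}\simeq{}^\Xi\mathbf{Y}^{L(\lambda)}$ of Subsection~\ref{s4.3}, exactly as in the proof of Corollary~\ref{cor-2.4.4.1}. There, applying a $\mathcal{J}$-functor to a module in which $M$ appears as a subquotient produces a module with a direct summand in which (a subquotient of) $M$ appears \emph{in the top}; and in the highest-weight model the top of any $V'\otimes_{\mathbb{C}}L(\lambda)$ is semisimple of GK-dimension $\mathrm{GKdim}(L(\lambda))$ — in particular, every simple in the top has full GK-dimension. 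Pulling this back: if $\theta M\neq 0$, then $M$ must have a subquotient appearing in the top of some $V''\otimes_{\mathbb{C}}L$, and that subquotient is a \emph{simple} module of GK-dimension $\mathrm{GKdim}(L)$. (Here I use the additivity of the Bernstein number from Subsection~\ref{s2.6}, which guarantees $V''\otimes_{\mathbb{C}}L$ has a finite-length semisimple top consisting of simples of GK-dimension $\mathrm{GKdim}(L)$.) But this directly contradicts the second defining property of a strange module: $M$ has no simple subquotient of GK-dimension $\mathrm{GKdim}(L)$. Hence $\theta M=0$.

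I expect the main obstacle to be bookkeeping: making precise that a \emph{non-simple} subquotient $M$ of $V\otimes_{\mathbb{C}}L$, with $\theta M\neq 0$, really does sit inside the birepresentation $\mathbf{Y}^{L}$ in a way that lets the cell-birepresentation machinery apply — Corollary~\ref{cor-2.4.4.1} is stated for subquotients $M$ with $\theta M\neq 0$, so this part is already covered, but I must be careful that the projective-module argument of Corollary~\ref{cor-2.4.4.1}, which produces ``a subquotient of $M$ in the top of some $V''\otimes_{\mathbb{C}}L$,'' yields a \emph{simple} such subquotient, not merely one of the right GK-dimension. This follows because the top of $V''\otimes_{\mathbb{C}}L$ is semisimple (Subsection~\ref{s2.6}), so any subquotient of $M$ appearing in that top has a simple quotient which is itself a subquotient of $M$ of GK-dimension $\mathrm{GKdim}(L)$. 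Once this is pinned down, the contradiction with strangeness is immediate, and the proof is essentially a one-paragraph consequence of Corollary~\ref{cor-2.4.4.1} and the structure of the highest-weight cell birepresentation.
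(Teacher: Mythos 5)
Your argument is correct and is essentially the paper's own proof: both deduce from the mechanism of Corollary~\ref{cor-2.4.4.1} that $\theta M\neq 0$ forces $M$ to have a simple subquotient of Gelfand--Kirillov dimension $\mathrm{GKdim}(L)$ (appearing in the semisimple top of some $V''\otimes_{\mathbb{C}}L$), contradicting strangeness. The extra bookkeeping you flag about simplicity of that subquotient is handled the same way in the paper, via the finite-length semisimple top from Subsection~\ref{s2.6}.
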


\begin{proof}
As explained in the proof of Corollary~\ref{cor-2.4.4.1},
the assumption $\theta M\neq 0$ implies that $M$ has  a subquotient which 
appears in the top  of some $V''\otimes_{\mathbb{C}}L$.
In other words, $M$  has a simple subquotient $M'$ such that 
$\mathrm{GKdim}(M')=\mathrm{GKdim}(L)$. This contradicts
our assumption that $M$ is strange.
\end{proof}

\subsection{The two conjectures in other type $A$ situations}\label{s4.5}

Let now $\mathfrak{g}$ be of any type. Let $L$ be a simple
$\mathfrak{g}$-module and $\lambda\in\mathfrak{h}^*$ be such that
the annihilators  of $L$ and $L(\lambda)$ coincide.
Combining the above results with Soergel's combinatorial
description, it follows that 
both Conjecture~\ref{conj7} and Conjecture~\ref{conj7-2}
are true for $L$ under the assumption that $W_\lambda$
is of type $A$. More precisely, we have:

\begin{corollary}\label{cor-s4.4-1}
Assume that $W_\lambda$
is of type $A$. Then the  birepresentation $\mathbf{Y}^L$ 
is simple transitive and the restriction of the 
relation $\triangleright$ to $\mathcal{X}_L$ is the full relation
(i.e.  any two elements are related).
\end{corollary}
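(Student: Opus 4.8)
The plan is to reduce the general-type statement, in which only the integral Weyl group $W_\lambda$ is assumed to be of type $A$, to the case $\mathfrak{g}$ of type $A$ already handled in Subsections~\ref{s4.2} and \ref{s4.3}. The key tool is Soergel's combinatorial description together with the compatibility of the equivalence $\mathscr{O}_\lambda\simeq$ (integral block of category $\mathscr{O}$ for a semisimple Lie algebra with Weyl group $W_\lambda$) with projective functors homogeneous of degree $\Xi$, as recalled in Subsection~\ref{s2.5}. First I would choose, using Lemma~\ref{lem-nice}, a nice dominant weight in $\lambda+\Lambda$, so that (exactly as argued in Subsection~\ref{s4.05}) we may assume the central character of $L$ is nice and the whole discussion takes place inside $\mathrm{add}(\mathscr{P}L)$; the conjectures for $L$ and for $\theta_{\lambda,\tilde\lambda}(L)$ are literally the same statement.

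Next I would run the reduction of Subsection~\ref{s4.1} verbatim: the two-sided cell $\mathcal{J}$ attached to $\mathrm{Ann}_{U(\mathfrak{g})}(L)$ lives in $\mathscr{P}^\Xi$, and because $W_\lambda$ is of type $A$, every two-sided cell of $W_\lambda$ contains a longest element $w_0^{W'}$ of some parabolic subgroup, and intersections of left and right cells are singletons. These are precisely the two special type $A$ features used in Subsection~\ref{s4.1}, and they hold here because the relevant Kazhdan–Lusztig combinatorics is that of $W_\lambda$, not of $W$. Hence I obtain a singular central character $\chi'$ whose dot-stabilizer is $W'$, a projective functor $\theta\in\mathcal{J}$ with target $\mathscr{M}_{\chi'}$ and $\theta L\neq 0$, and a simple quotient $L'$ of $\theta L$ for which $\theta_e^{\chi'}$ is the only indecomposable projective endofunctor of $\mathscr{M}_{\chi'}$ in $\mathscr{P}^\Xi$ not killing $L'$. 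Lemma~\ref{lem-n853} applies unchanged (its proof only uses category $\mathscr{O}$ combinatorics for the blocks $\mathscr{O}_{\mu_i}$, which are governed by $W_\lambda$), so the projective functors in $\mathcal{J}$ not annihilating $L'$ are self-equivalences of $\mathscr{M}^{\mathbf{I}}_{\chi'}$.

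With this in place, Conjecture~\ref{conj7-2} for $\mathcal{X}_L$ follows exactly as in Subsection~\ref{s4.2}: the modules in $\mathcal{X}_L\cap\mathrm{Irr}(\mathfrak{g})_{\chi'}$ are all of the form $\theta_{\mu,\mu'}(L')$ with $\theta_{\mu,\mu'}$ an equivalence, so they form a single $\triangleright$-class, and Theorem~\ref{thm7-9} upgrades this to all central characters; thus the restriction of $\triangleright$ to $\mathcal{X}_L$ is the full relation. For Conjecture~\ref{conj7}, I would reprove weak Kostant positivity of $L'$ (Lemma~\ref{lem08}): the only input beyond formal adjunction and the classification of projective functors is that $L(\lambda')$, $\lambda'$ dominant, is a quotient of a projective Verma module in $\mathscr{O}_{\chi'}$ and hence Kostant positive by \cite[Subsection~6.9]{Ja}, which holds in any type; the multiplicity bookkeeping comparing $\mathcal{L}(L(\lambda'),L(\lambda'))$ with $\mathcal{L}(L',L')^\Xi$ then goes through word for word. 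Then \cite[Theorem~5]{KhM} identifies ${}^\Xi\mathbf{X}^{L'}$ and ${}^\Xi\mathbf{X}^{L(\lambda)}$ with the same category of Harish-Chandra bimodules as birepresentations of $\mathscr{P}^\Xi$, giving an equivalence ${}^\Xi\mathbf{Y}^{L'}\simeq{}^\Xi\mathbf{Y}^{L(\lambda)}$; since $\mathbf{Y}^{L(\lambda)}$ is simple transitive by \cite[Theorem~22]{MM1}, \cite[Proposition~22]{MM2} and \cite[Theorem~18]{MM16} (all type-independent), and $\mathbf{Y}^L=\mathbf{Y}^{L'}$ by Theorem~\ref{thm7-11} applied with the already-established Conjecture~\ref{conj7-2} for $\mathcal{X}_L$, we conclude that $\mathbf{Y}^L$ is simple transitive after twisting ${}^\Xi\mathbf{Y}^L$ by the self-equivalences from Lemma~\ref{lem-n853}. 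The main obstacle to check carefully is that the hypothesis ``$W_\lambda$ of type $A$'' really is all that the type $A$ arguments of Subsections~\ref{s4.05}--\ref{s4.3} used: one must verify that wherever ``$\mathfrak{g}$ of type $A$'' was invoked, it was only through the Kazhdan–Lusztig combinatorics of $W_\lambda$ (via Soergel's equivalence $\mathscr{O}_\lambda\simeq$ integral block for $W_\lambda$), and never through a global feature of $W$ such as $\Xi=\Lambda$ or the structure of non-integral blocks — once this is confirmed, the corollary is immediate.
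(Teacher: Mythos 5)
Your proposal is correct and follows essentially the same route as the paper, which disposes of this corollary in a single sentence ("combining the above results with Soergel's combinatorial description"); your write-up is just a careful expansion of that reduction, checking that Subsections~\ref{s4.05}--\ref{s4.3} only use the Kazhdan--Lusztig combinatorics of $W_\lambda$. The one spot where your closing caveat genuinely bites, and which neither you nor the paper resolves explicitly, is Lemma~\ref{lem-nice}: its proof of \eqref{eq-res-5} invokes the natural $\mathfrak{sl}_n$-module, i.e.\ a global feature of $\mathfrak{g}$ rather than of $W_\lambda$, so for general $\mathfrak{g}$ one still needs to verify that $\lambda+\Lambda$ contains a nice dominant weight with the same integral Weyl group and dot-stabilizer.
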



\section{Socles}\label{s2}

\subsection{The main result}\label{s2.1}

Let us start with repeating the formulation of the main result.

\begin{theorem}\label{thm1}
Let $\mathfrak{g}$ be a semi-simple finite dimensional Lie algebra
over $\mathbb{C}$. Let $L$ be a holonomic simple $\mathfrak{g}$-module and
let $V$ be a finite dimensional $\mathfrak{g}$-module. Then the 
$\mathfrak{g}$-module $V\otimes_{\mathbb{C}} L$ has an
essential semi-simple submodule of finite length.
\end{theorem}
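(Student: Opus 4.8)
The plan is to work inside the module $V\otimes_{\mathbb{C}}L$ and isolate the piece of it that has the same Gelfand--Kirillov dimension as $L$, using the trichotomy of subquotients described in Subsection~\ref{s1.3}: simple subquotients of full $\mathrm{GKdim}$, simple subquotients of smaller $\mathrm{GKdim}$, and \emph{strange} subquotients. Since we already know (from the additivity of the Bernstein number, see Subsection~\ref{s2.6}) that $V\otimes_{\mathbb{C}}L$ has a maximal semi-simple submodule $S$ of finite length, the whole point is to show that $S$ is essential, i.e.\ that every non-zero submodule of $V\otimes_{\mathbb{C}}L$ meets $S$. First I would reduce to showing that $V\otimes_{\mathbb{C}}L$ has no non-zero submodule $N$ with $\mathrm{GKdim}(N)<\mathrm{GKdim}(L)$ and no \emph{strange} submodule: indeed, if $N$ is a non-zero submodule, pass to a submodule minimal with respect to having full $\mathrm{GKdim}$ (possible since $\mathrm{GKdim}$ takes finitely many values and $V\otimes_{\mathbb{C}}L$ is noetherian); if that minimal submodule has a simple submodule of full $\mathrm{GKdim}$ we are done, if it has one of smaller $\mathrm{GKdim}$ that is the first forbidden case, and if it has none at all it is strange. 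So everything reduces to: \emph{$V\otimes_{\mathbb{C}}L$ has no strange submodule} (the small-$\mathrm{GKdim}$ case being handled along the way, or folded into the same argument since a submodule of smaller $\mathrm{GKdim}$ that is simple is killed by all $\theta\in\mathcal{J}$, just like a strange module, by Corollary~\ref{cor-2.4.4.2}).

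Next I would bring in the birepresentation machinery. Let $\chi$ be the central character of $L$, let $\mathcal{R}$ be the right cell corresponding to $\mathrm{Ann}_{U(\mathfrak{g})}(L)$ and $\mathcal{J}\supseteq\mathcal{R}$ the two-sided cell. The holonomicity hypothesis enters here: because $L$ is holonomic, $\mathrm{Ann}_{U(\mathfrak{g})}(L)=\mathrm{Ann}_{U(\mathfrak{g})}(L(\lambda))$ for a $\lambda$ whose associated cell $\mathcal{J}$ consists of elements of the \emph{minimal} possible $\mathrm{GKdim}$ compatible with that annihilator, and this is exactly the feature that makes the type-$A$-style argument of Section~\ref{s4} available: I would invoke Lemma~\ref{lem-nice} to reduce to a nice central character, then run the singular-block reduction of Subsection~\ref{s4.1} to produce a simple module $L'$ in a singular block $\mathscr{M}_{\chi'}$ which is weakly Kostant positive (Lemma~\ref{lem08}) and on which $\theta_e^{\chi'}$ is the unique degree-$\Xi$ indecomposable projective endofunctor not annihilating it. Crucially, the holonomic case is precisely the case where the relevant cell $\mathcal{J}$ is the one computed by Ja's formula and where one can run this reduction; outside the holonomic case, $\mathcal{J}$ need not be of this shape, which is why the theorem is stated only for holonomic $L$.

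Now the key structural input is Corollary~\ref{cor-2.4.4.2}: any strange subquotient $M$ of $V\otimes_{\mathbb{C}}L$ is killed by every indecomposable projective functor $\theta$ from $\mathcal{J}$. The plan is to derive a contradiction from the existence of a non-zero \emph{strange submodule} $M\subseteq V\otimes_{\mathbb{C}}L$. Consider the Duflo involution $\theta_{\mathcal{R}}$ attached to $\mathcal{R}$; it is a coalgebra $1$-morphism in $\mathscr{P}$ (cf.\ Remark~\ref{rem-8-3} and \cite[Section~4.4]{MMMTZ2}), and the counit $\theta_{\mathcal{R}}\to\mathbbm{1}$ evaluated at $L$ is non-zero, so $L$ is a quotient of $\theta_{\mathcal{R}}L$, which is in turn a summand of some $V\otimes_{\mathbb{C}}L$. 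Apply $\theta_{\mathcal{R}}$ (or rather the whole collection of $\theta\in\mathcal{J}$) to the inclusion $M\hookrightarrow V\otimes_{\mathbb{C}}L$: by exactness of projective functors we get $\theta M\hookrightarrow \theta(V\otimes_{\mathbb{C}}L)\cong (V'\otimes_{\mathbb{C}})(\theta' L)$ for appropriate $V'$, $\theta'$; but $\theta M=0$ by Corollary~\ref{cor-2.4.4.2}. The contradiction comes from the \emph{transitivity and simple transitivity} of $\mathbf{Y}^L$ (Corollary~\ref{cor-s4.4-1}, applicable because for holonomic $L$ one checks $W_\lambda$ falls into the good situation, and in general because $\mathcal{J}$ is the minimal cell for its annihilator): in $\mathbf{Y}^L$ the submodule generated by $M$ under the $\mathcal{J}$-action would be a proper $\mathscr{P}$-invariant structure incompatible with simple transitivity — more precisely, the additive closure of indecomposables containing $M$ as a subquotient but not meeting the socle would, after passing to the abelianization, violate that the action of $[\theta]$ for $\theta\in\mathcal{J}$ is given by the explicit positive matrices of \cite[Proposition~4.15]{MMMTZ}, whose positivity forces every full-$\mathrm{GKdim}$ subquotient to propagate to the top under $\mathcal{J}$, contradicting strangeness. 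The hard part, and the one I would expect to take the most care, is making this last contradiction precise: translating "$M$ is a strange submodule, not just subquotient" into the language of (bi)representations and showing it produces a forbidden sub-birepresentation or a forbidden discrete extension à la Lemma~\ref{lem-8-1}, rather than merely a forbidden subquotient which Corollary~\ref{cor-2.4.4.2} already handles. Once that is in place, strange submodules are excluded, hence $S$ is essential, hence $S=\mathrm{soc}(V\otimes_{\mathbb{C}}L)$ has finite length, which is the claim.
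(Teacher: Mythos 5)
Your overall reduction is on target: the paper likewise reduces the theorem to excluding strange submodules, uses the additivity of the Bernstein number to bound the number of full-GK-dimension subquotients, and uses the Duflo involution to reduce to projective functors from the two-sided cell $\mathcal{J}$. The genuine gap is in the final step. You propose to obtain the contradiction from Corollary~\ref{cor-2.4.4.2}, Corollary~\ref{cor-s4.4-1} and the simple transitivity of $\mathbf{Y}^L$, claiming that holonomicity puts you ``in the good situation'' where the singular-block reduction of Subsection~\ref{s4.1} and Lemma~\ref{lem08} apply. This is not so: that entire machinery (nice blocks, the singular-block reduction, weak Kostant positivity, Corollary~\ref{cor-2.4.4.2}) rests on type~$A$ Kazhdan--Lusztig combinatorics --- every two-sided cell containing the longest element $w_0^{\mathfrak{p}}$ of a parabolic, and left and right cells intersecting in singletons --- and these properties fail for general $\mathfrak{g}$ regardless of whether $L$ is holonomic. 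Holonomicity says nothing about the shape of $\mathcal{J}$. The paper is explicit about this: in type~$A$ one can indeed conclude $\theta^*K=0$ from Corollary~\ref{cor-2.4.4.2}, but in general a different argument is needed, and the cases where $\mathcal{J}$ does contain some $w_0^{\mathfrak{p}}$ are treated separately (without the holonomicity hypothesis) in Theorem~\ref{thm1-n}.

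The argument you are missing is elementary but is the crux of the proof. Let $K$ be a non-zero submodule of $\theta L$ (for $\theta$ the sum of functors from $\mathcal{J}$) intersecting the maximal semi-simple submodule trivially; after finitely many replacements, $K$ may be assumed strange, so $K$ has no simple submodule and every quotient of $K$ by a non-zero submodule has GK-dimension strictly below $\mathrm{GKdim}(L)$. Adjunction applied to the non-zero inclusion $K\hookrightarrow\theta L$ gives $\mathrm{Hom}_{\mathfrak{g}}(\theta^*K,L)\neq 0$, so $\theta^*K\neq 0$. On the other hand, since $K$ has no simple submodule, the intersection of all its non-zero submodules is zero, so $K$ embeds into the inverse limit of the filtered diagram $\mathcal{P}$ of its quotients by non-zero submodules. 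Because $\theta^*$ is biadjoint it commutes with inverse limits, and because $L$ is holonomic every object of $\mathcal{P}$, having strictly smaller GK-dimension, is killed by $\theta^*\in\mathcal{J}$. Hence $\theta^*K=0$, a contradiction. This inverse-limit trick is exactly where holonomicity is used, and it replaces all of the birepresentation-theoretic input in your sketch.
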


\subsection{Reduction to a finite piece}\label{s2.7}

Let $L$ be a simple $\mathfrak{g}$-module and $\lambda\in\mathfrak{h}^*$
be such that $\mathrm{Ann}_{U(\mathfrak{g})}(L)=
\mathrm{Ann}_{U(\mathfrak{g})}(L(\lambda))$. Consider the set
$\lambda+\Lambda$ and  the set 
$\mathcal{K}(\lambda)$ of all central characters of the form
$\chi_{{}_\mu}$, where $\mu\in \lambda+\Lambda$. 

As we have seen in Subsection~\ref{s2.5}, the 
combinatorial datum which  controls equivalences
between blocks of $\mathscr{O}$ is given  by
triples of the form $W'_\mu\subset W_\mu\subset W$,
for $\mu\in \mathfrak{h}^*$
(see \cite{Kevin} for an explicit classification).
Therefore, it is natural to consider the 
finite set of all triples of the form 
$\tilde{\tilde{W}}\subset \tilde{W}\subset W$, where
$\tilde{W}$ is the subgroup of $W$ generated by some reflections
(and hence is the Weyl group of the root subsystem of $\mathbf{R}$
generated by the roots corresponding to those reflections) and
$\tilde{\tilde{W}}$ is a parabolic subgroup of $\tilde{W}$ (with respect to the choice of 
positive roots inherited from  $\mathbf{R}_+$). 

Given two dominant weights $\mu$ and $\nu$ in $\lambda+\Lambda$,
we have their respective integral Weyl groups $W_\mu$ and $W_\nu$
and their respective dot-stabilizers $W'_\mu$ and $W'_\nu$.
If $W_\mu=W_\nu$ and $W'_\mu=W'_\nu$, then the projective
functors $\theta_{\mu,\nu}$ and $\theta_{\nu,\mu}$ are mutually
inverse equivalences of categories.

Now we can fix a finite set $\mu_1$, $\mu_2$,\dots, $\mu_k$
of dominant weights in $\lambda+\Lambda$ such that
$\mu_1\in W_\lambda\cdot \lambda$ and, for any 
other dominant weight $\nu$ in $\lambda+\Lambda$, there is
some $\mu_i$ such that we have both the equality $W_{\mu_i}=W_\nu$
of the corresponding integral Weyl groups and the equality
$W'_{\mu_i}=W'_\nu$ of the corresponding dot-stabilizers. 
Let $\mathscr{N}$ be the direct sum of
all the corresponding $\mathscr{M}_{\chi_{{}_{\mu_i}}}$. 

\begin{proposition}\label{prop-reduction}
In order to prove Theorem~\ref{thm1},
it is enough to prove that, for any indecomposable projective
endofunctor $\theta$ of $\mathscr{N}$, the module
$\theta L$ has an essential semi-simple submodule of finite length.
\end{proposition}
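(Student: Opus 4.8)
The plan is to reduce the statement of Theorem~\ref{thm1} to the situation treated by the hypothesis of this proposition through two independent moves: reducing from an arbitrary finite dimensional $V$ to the indecomposable projective functors whose domain is the block of $L$, and reducing from the infinitely many blocks $\mathscr{M}_{\chi_{{}_\mu}}$, $\mu\in\lambda+\Lambda$, to the finite piece $\mathscr{N}$ by exploiting the equivalences $\theta_{\mu,\nu}$. First I would observe that tensoring $L$ with a finite dimensional $V$ decomposes, using the block decomposition $\mathscr{M}=\prod_\chi\mathscr{M}_\chi$ recalled in Subsection~\ref{s2.3}, as a finite direct sum $V\otimes_{\mathbb{C}}L\cong\bigoplus_i \theta_i L$, where each $\theta_i$ is an indecomposable projective functor with domain $\mathscr{M}_\chi$ ($\chi$ the central character of $L$) and codomain some $\mathscr{M}_{\chi'}$. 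Since having an essential semi-simple submodule of finite length is clearly inherited by finite direct sums (the socle of a finite direct sum is the direct sum of the socles, and essentiality as well as finiteness of length are preserved), it suffices to prove the assertion for each $\theta_i L$ individually. Moreover, by the classification of projective functors recalled in Subsection~\ref{s2.3}, a non-zero projective functor out of $\mathscr{M}_\chi$ can only land in $\mathscr{M}_{\chi'}$ with $\chi'\in\mathcal{K}(\lambda)$, i.e. $\chi'=\chi_{{}_\mu}$ for some $\mu\in\lambda+\Lambda$; so we may restrict attention to indecomposable projective functors $\theta:\mathscr{M}_\chi\to\mathscr{M}_{\chi_{{}_\mu}}$ with $\mu$ a dominant weight in $\lambda+\Lambda$.

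Next I would use the chosen cross-section $\mu_1,\dots,\mu_k$ to collapse the infinitely many target blocks into the finitely many appearing in $\mathscr{N}$. Given an arbitrary dominant $\nu\in\lambda+\Lambda$, by construction there is an index $i$ with $W_{\mu_i}=W_\nu$ and $W'_{\mu_i}=W'_\nu$; by the remark just before the proposition, $\theta_{\mu_i,\nu}$ and $\theta_{\nu,\mu_i}$ are mutually inverse equivalences between $\mathscr{M}_{\chi_{{}_{\mu_i}}}$ and $\mathscr{M}_{\chi_{{}_\nu}}$. Now if $\theta:\mathscr{M}_\chi\to\mathscr{M}_{\chi_{{}_\nu}}$ is any indecomposable projective functor, then $\theta_{\nu,\mu_i}\circ\theta$ is (a direct summand of, hence by indecomposability isomorphic to) an indecomposable projective functor $\mathscr{M}_\chi\to\mathscr{M}_{\chi_{{}_{\mu_i}}}$, and $\theta L\cong\theta_{\mu_i,\nu}\big(\theta_{\nu,\mu_i}\theta L\big)$. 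Since $\theta_{\mu_i,\nu}$ is an equivalence of categories, it preserves the lattice of submodules, semi-simplicity, length, and essentiality of a submodule; therefore $\theta L$ has an essential semi-simple submodule of finite length if and only if $(\theta_{\nu,\mu_i}\theta)L$ does. Composing the indecomposable functor $\theta_{\nu,\mu_i}\theta:\mathscr{M}_\chi\to\mathscr{M}_{\chi_{{}_{\mu_i}}}$ with the (also projective) inclusion of blocks, it is an indecomposable projective endofunctor of $\mathscr{N}$ (after noting that $\mathscr{M}_\chi=\mathscr{M}_{\chi_{{}_{\mu_1}}}$ is a summand of $\mathscr{N}$ by the choice $\mu_1\in W_\lambda\cdot\lambda$), applied to $L\in\mathscr{N}$. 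Thus, under the stated hypothesis, $(\theta_{\nu,\mu_i}\theta)L$, and hence $\theta L$, has the required submodule, and assembling the finitely many pieces gives the conclusion for $V\otimes_{\mathbb{C}}L$.

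The only genuinely delicate point is bookkeeping rather than mathematics: one must be careful that an indecomposable projective functor composed with an equivalence of blocks remains \emph{indecomposable} projective (immediate, since equivalences of categories send indecomposables to indecomposables and projective functors compose to projective functors), and that the block $\mathscr{M}_\chi$ of $L$ itself genuinely occurs among the $\mathscr{M}_{\chi_{{}_{\mu_i}}}$ making up $\mathscr{N}$, which is guaranteed by the requirement $\mu_1\in W_\lambda\cdot\lambda$ (so that $\chi_{{}_{\mu_1}}=\chi$). I expect no essential obstacle here; the substance of Theorem~\ref{thm1} has been deliberately packaged into the hypothesis of the proposition, and everything above is a routine use of exactness of projective functors, the block decomposition, and the equivalences $\theta_{\mu,\nu}$. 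Note also that, at the level of this reduction, holonomicity of $L$ plays no role: the proposition is a formal statement valid for any simple $L$, and the holonomic hypothesis is only needed later when one actually analyses $\theta L$ for $\theta$ an endofunctor of $\mathscr{N}$.
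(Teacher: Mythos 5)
Your proof is correct and follows essentially the same route as the paper's: the paper's (much terser) argument likewise observes that every non-zero projective functor out of the block of $L$ factors through one of the blocks of $\mathscr{N}$ via the equivalences $\theta_{\mu_i,\nu}$, and that equivalences of categories preserve the property of having an essential semi-simple submodule of finite length. Your extra bookkeeping --- decomposing $V\otimes_{\mathbb{C}}{}_-$ into indecomposable summands, checking that the property passes through finite direct sums, and verifying that composing with an equivalence preserves indecomposability --- is all implicit in the paper's two-sentence proof.
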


\begin{proof}
Due to our construction of $\mathscr{N}$, any non-zero projective 
functor $\theta'$ from $\mathscr{M}_{\chi_{{}_\lambda}}$ to some
$\mathscr{M}_{\chi_{{}_{\nu}}}$ factors through some
$\mathscr{M}_{\chi_{{}_{\mu_i}}}$ via equivalences of categories
given by projective functors. Equivalences of categories,
clearly, preserve the module theoretic property of having 
an essential semi-simple submodule of finite length.
\end{proof}

\subsection{
Reduction to the maximal two-sided cell}\label{s2.9}

Let  $L\in \mathscr{N}$ be a simple module and $\theta$ a projective
endofunctor of $\mathscr{N}$. Let $\mathcal{J}$ be the two-sided KL-cell
that contains the left KL-cell corresponding to the annihilator of
$L$ in $U(\mathfrak{g})$. Let $\theta_\mathcal{J}$ be a multiplicity-free
direct sum of all projective endofunctors of $\mathscr{N}$ that belong
to $\mathcal{J}$. Let $\tilde{\theta}$ be the Duflo element in the
left KL-cell corresponding to the annihilator of
$L$ in $U(\mathfrak{g})$. Then we have a natural transformation 
from the identity to $\tilde{\theta}$, whose evaluation at $L$ is non-zero,
see \cite[Subsection~4.5]{MM1}. Consequently, $L$ appears as a 
submodule of $\tilde{\theta} L$, and hence as a submodule of
$\theta_\mathcal{J} L$, since $\tilde{\theta}$ is a summand of
$\theta_\mathcal{J}$. 

Applying $\theta$ to the inclusion $L\hookrightarrow \theta_\mathcal{J} L$, 
we get an inclusion of $\theta L$ into $\theta \theta_\mathcal{J} L$.
The composition $\theta \theta_\mathcal{J}$ belongs to the additive closure
of $\theta_\mathcal{J}$, modulo projective functors from strictly higher
two-sided cells. The latter projective functors annihilate $L$ because
of our assumption on the annihilator of $L$. This means that $\theta L$
is a submodule of $\theta' L$, for some $\theta'$ in the additive closure of
$\theta_\mathcal{J}$. Therefore, if we can prove Theorem~\ref{thm1}
for $\theta= \theta_\mathcal{J}$, it follows that Theorem~\ref{thm1}
is true for all $\theta$.

\subsection{Proof of Theorem~\ref{thm1}}\label{s2.8}

Unfortunately, $\mathscr{M}$ does not have arbitrary products,
which is a technical obstacle for our coming arguments 
that we need to deal with.

For $k\in\mathbb{Z}_{>0}$ and a central character $\chi$, 
denote by $\mathscr{M}^k_\chi$ the full subcategory of 
$\mathscr{M}_\chi$ that consists of all modules annihilated
by the $k$-th power of the kernel of $\chi$. Let 
$\mathscr{M}^k$ be the product of all $\mathscr{M}^k_\chi$.
Then, by \cite[Theorem~5.1]{Ko}, for a projective functor $\theta$
and $k\in\mathbb{Z}_{>0}$, there is $m\in\mathbb{Z}_{>0}$
such that $\theta$ maps $\mathscr{M}^k$ to $\mathscr{M}^m$.
Note that $\mathscr{M}^k$ has arbitrary limits, for all $k$.
%
%
%
%

Now let $L\in \mathscr{N}$ be a simple module and $\theta$ a projective
endofunctor of $\mathscr{N}$ which belongs to 
$\mathcal{J}$. As we only have finitely
many indecomposable projective endofunctors of $\mathscr{N}$,
we can fix $V$ such that all 
indecomposable projective endofunctors of $\mathscr{N}$  are direct summands of the 
projective functor  $V\otimes_{\mathbb{C}}{}_-$. In particular, 
by the additivity of the Bernstein number with respect to short exact 
sequences, see Subsection~\ref{s2.6}, for any filtration
\begin{displaymath}
0=M_0\subset M_1\subset \dots \subset M_k=  \theta L,
\end{displaymath}
the number of $i$ such that $\mathrm{GKdim}(M_i/M_{i-1})=\mathrm{GKdim}(L)$
cannot be greater than $\dim(V)\cdot \mathrm{BN}(L)$.

Let $N$ be a maximal semi-simple submodule of $\theta L$.
We know, see Subsection~\ref{s2.6}, that it has finite length. 
Assume that it is not essential
and let $K$ be a non-zero submodule of $\theta L$ such that
$K\cap N=0$. Then $K$ has no simple submodule.
From the previous paragraph, we may further assume that 
any quotient of $K$ by a non-zero submodule has 
Gelfand-Kirillov dimension strictly smaller than $\mathrm{GKdim}(L)$.
Indeed, if $K$ has a non-zero submodule $K'$ such that
$\mathrm{GKdim}(K/K')=\mathrm{GKdim}(L)$, we can simply replace $K$
by $K'$. After at most $\dim(V)\cdot \mathrm{BN}(L)$ replacements, we obtain a $K$
with the desired property. In particular, $K$ is a strange 
submodule of $\theta L$.

First of all, we note that, by adjunction,
\begin{displaymath}
0\neq \mathrm{Hom}_{\mathfrak{g}}(K,\theta L)\cong 
\mathrm{Hom}_{\mathfrak{g}}(\theta^* K,L),
\end{displaymath}
in particular, $\theta^* K\neq 0$.

On the other hand, we want to show that $\theta^* K=0$ and in this way get a contradiction.
For example, in type $A$, $\theta^* K=0$ follows from Corollary~\ref{cor-2.4.4.2}.
To prove $\theta^* K=0$ in general (but, under the additional,
compared to Corollary~\ref{cor-2.4.4.2}, assumption that $L$ is holonomic),
consider the filtered diagram $\mathcal{P}$ 
of quotients of $K$ by non-zero
submodules with respect to natural projections. The kernel of the natural map from
$K$ to the limit $\displaystyle \lim_{\leftarrow}\mathcal{P}$ equals the intersection of all
non-zero submodules of $K$. That is zero, as $K$ does not have simple submodules, 
in particular, it does not have a simple socle.

As $\theta^*$ has a biadjoint, we have $\displaystyle\theta^*\lim_{\leftarrow}\mathcal{P}
\cong\lim_{\leftarrow}\theta^*\mathcal{P}$.
At the same time, 
the Gelfand-Kirillov dimension of any $X\in \mathcal{P}$
is strictly smaller than $\mathrm{GKdim}(L)$, by our assumption on $K$. 
Since $\theta\in\mathcal{J}$, we have $\theta^*\in\mathcal{J}$ and hence
$\theta^* X=0$ by our assumption that $L$ is holonomic.
This means that $\displaystyle\lim_{\leftarrow}\theta^*\mathcal{P}=0$, 
which implies that $\theta^* K=0$, a contradiction.
This proves that such  $K$ cannot exist and 
completes the proof of Theorem~\ref{thm1}.

\section{Beyond holonomic modules outside type $A$}\label{s2.17s}

\subsection{Results}\label{s2.17}
 
As already mentioned in the introduction, in type $A$,
the  assertion of Theorem~\ref{thm1} is true for all simple
modules $L$, not necessarily holonomic ones, see
\cite[Theorem~23]{CCM21}. The main reson why this works is the
combinatorial property of type $A$ that each two-sided KL-cell
contains the longest element $w_0^\mathfrak{p}$ of
the Weyl group of some
parabolic subalgebra $\mathfrak{p}$.
We can generalize \cite[Theorem~23]{CCM21} as follows.

\begin{theorem}\label{thm1-n}
Let $\mathfrak{g}$ be a semi-simple classical finite dimensional Lie algebra
over $\mathbb{C}$. Let $L$ be a simple $\mathfrak{g}$-module 
such that the two-sided KL-cell $\mathcal{J}$ that contains the left KL-cell
corresponding to the annihilator of $L$ in $U(\mathfrak{g})$
contains some $w_0^\mathfrak{p}$.
Let $V$ be a finite dimensional $\mathfrak{g}$-module. Then the 
$\mathfrak{g}$-module $V\otimes_{\mathbb{C}} L$ has an
essential semi-simple submodule of finite length.
\end{theorem}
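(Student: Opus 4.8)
The plan is to retrace the proof of Theorem~\ref{thm1} from Subsections~\ref{s2.7}--\ref{s2.8} and isolate exactly the one place where the holonomicity of $L$ was used, then replace that argument with one that exploits the combinatorial hypothesis on $\mathcal{J}$. Recall that the holonomicity was invoked only at the very end of Subsection~\ref{s2.8}: there one had a strange submodule $K\subset\theta L$ with $\theta\in\mathcal{J}$, and one needed to conclude $\theta^* K=0$. For holonomic $L$ this followed because $\theta^*\in\mathcal{J}$ kills every module of Gelfand--Kirillov dimension strictly smaller than $\mathrm{GKdim}(L)$. Everything up to that point --- the reduction to $\mathscr{N}$ via Proposition~\ref{prop-reduction}, the reduction to $\theta=\theta_\mathcal{J}$ in Subsection~\ref{s2.9}, the finiteness of the maximal semisimple submodule, the extraction of a strange submodule $K$, and the adjunction identity $\mathrm{Hom}_{\mathfrak{g}}(\theta^*K,L)\cong\mathrm{Hom}_{\mathfrak{g}}(K,\theta L)\neq 0$ --- goes through verbatim, with no use of holonomicity. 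So the entire task is: assuming $\mathcal{J}$ contains some $w_0^{\mathfrak{p}}$, show that $\theta^* K=0$ for every strange subquotient $K$ of $V\otimes_{\mathbb{C}}L$.

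First I would reduce, using the reductions already in place, to the case where $L$ is a simple highest weight module, or at least to a setting where $L$ lies in a singular block whose dot-stabilizer is the parabolic subgroup $W^{\mathfrak{p}}$ generated by the simple reflections of $\mathfrak{p}$; the point of the hypothesis ``$\mathcal{J}$ contains $w_0^{\mathfrak{p}}$'' is precisely, by \cite[Lemma~9.15(a)]{Ja} and the discussion in Subsection~\ref{s2.65}, that $\mathrm{GKdim}$ of the relevant simple module equals the number of positive roots for $W$ minus that for $W^{\mathfrak{p}}$, and that the block $\mathscr{M}_{\chi'}$ whose identity functor $\theta_e^{\chi'}$ lies in $\mathcal{J}$ is the singular block with dot-stabilizer $W^{\mathfrak{p}}$. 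As in Subsection~\ref{s4.1}, inside $\mathscr{M}_{\chi'}$ the functor $\theta_e^{\chi'}$ is the unique indecomposable projective endofunctor in $\mathcal{J}$, so I would transport the strange submodule $K$ through the left-cell functor $\theta$ and work entirely inside $\mathscr{M}_{\chi'}$, where the problem becomes: a strange subquotient of a module obtained by applying a projective functor (from $\mathcal{J}$) to $L$, living in the block attached to $w_0^{\mathfrak{p}}$, must be killed by every projective functor in $\mathcal{J}$.

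The key step, and the one I expect to be the main obstacle, is to establish $\theta^* X=0$ for every module $X$ with $\mathrm{GKdim}(X)<\mathrm{GKdim}(L)$, \emph{without} assuming $L$ holonomic, using instead only that $\theta^*\in\mathcal{J}$ and that $\mathcal{J}$ is the cell of $w_0^{\mathfrak{p}}$. My proposed mechanism: a nonzero application $\theta^* X\neq 0$ would force, by adjunction and the classification of projective functors, a nonzero map from a dominant Verma module (or from $\Delta(\mu_1)$, in the notation of Subsection~\ref{s4.1}) into $\theta\,\theta^* X$, hence a simple subquotient of $X$ with the same annihilator (hence the same $\mathrm{GKdim}$) as the corresponding simple in the block of $w_0^{\mathfrak{p}}$; since that $\mathrm{GKdim}$ equals $\mathrm{GKdim}(L)$, this contradicts $\mathrm{GKdim}(X)<\mathrm{GKdim}(L)$. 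Making this precise requires the explicit description of the action of projective functors in $\mathcal{J}$ on the cell birepresentation via the projective bimodules of \cite[Proposition~4.15]{MMMTZ}, exactly as used in the proof of Corollary~\ref{cor-2.4.4.1}, together with the classical-type input that guarantees the relevant cell combinatorics behaves like type $A$ on $\mathcal{J}$ --- this is where ``classical'' in the hypothesis is needed and where one must be careful, since outside type $A$ two non-isomorphic projective functors can act identically and the grading by $\Lambda/\Xi$ (Subsections~\ref{s2.4}--\ref{s2.5}) has to be brought in. Once $\theta^* X=0$ is known for such $X$, the limit argument from Subsection~\ref{s2.8} finishes the proof unchanged: form the filtered diagram $\mathcal{P}$ of nonzero quotients of $K$, note $\theta^*\lim_{\leftarrow}\mathcal{P}\cong\lim_{\leftarrow}\theta^*\mathcal{P}=0$ while $K\hookrightarrow\lim_{\leftarrow}\mathcal{P}$ because $K$ has no simple socle, so $\theta^* K=0$, contradicting $\theta^* K\neq 0$ and showing no strange $K$ can exist.
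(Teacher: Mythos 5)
Your overall strategy --- isolate the single use of holonomicity in the proof of Theorem~\ref{thm1} and replace it using the $w_0^{\mathfrak{p}}$ hypothesis, by translating to the singular block whose dot-stabilizer is $W^{\mathfrak{p}}$ --- starts in the right direction, and the reduction steps you cite do carry over. But your key step rests on a claim that is false precisely in the regime this theorem is meant to cover. You propose to show that $\theta^* X=0$ for \emph{every} module $X$ with $\mathrm{GKdim}(X)<\mathrm{GKdim}(L)$ and $\theta^*\in\mathcal{J}$, without assuming $L$ holonomic. This cannot be established: if $L$ is not holonomic, there exists a simple module $L_0$ with $\mathrm{Ann}_{U(\mathfrak{g})}(L_0)=\mathrm{Ann}_{U(\mathfrak{g})}(L)$ and $\mathrm{GKdim}(L_0)<\mathrm{GKdim}(L)$, and the Duflo involution in the relevant left cell of $\mathcal{J}$ does not annihilate $L_0$. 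The parenthetical in your mechanism --- ``a simple subquotient of $X$ with the same annihilator (hence the same $\mathrm{GKdim}$)'' --- is exactly the implication that fails for non-holonomic modules; same annihilator does not force equal Gelfand--Kirillov dimension. So the inverse-limit argument from Subsection~\ref{s2.8} cannot be rescued this way. A secondary gap: you ``transport the strange submodule $K$ through the left-cell functor $\theta$'' without justifying that the image still contains a strange submodule; a priori $\theta K$ could acquire simple submodules of maximal GK-dimension, and ruling this out requires an argument (adjunction to exclude simple submodules of GK-dimension $\mathrm{GKdim}(L)$, then mapping to a finite direct sum of injective hulls of the finitely many maximal-GK-dimension simple subquotients to extract a strange kernel).

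The paper's proof avoids the false intermediate claim entirely. After translating to the singular block attached to $w_0^{\mathfrak{p}}$, it proves (Lemma~\ref{lem-new557}) that every indecomposable projective endofunctor of that block not killing $L'$ becomes \emph{invertible} modulo the biideal generated by functors outside the two-sided class of the identity; this uses translation onto and off the wall, positivity of the grading on homomorphisms between projective functors, and the fact that for classical $\mathfrak{g}$ the relevant asymptotic bicategory is that of vector spaces graded by a finite group. Consequently $\theta' L'$ is semi-simple of finite length for all relevant $\theta'$, hence has no strange submodules; combined with the propagation of strangeness under translation described above, this yields the contradiction. If you want to complete your proof, replace your key step by this invertibility statement rather than by a vanishing statement on low GK-dimension modules.
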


In the same setup, we also prove  both Conjectures~\ref{conj7} 
and \ref{conj7-2}.

\begin{theorem}\label{thm1-n2}
Let $\mathfrak{g}$ be a semi-simple classical finite dimensional Lie algebra
over $\mathbb{C}$. Let $L$ be a simple $\mathfrak{g}$-module 
such that the two-sided KL-cell $\mathcal{J}$ that contains the left KL-cell
corresponding to the annihilator of $L$ in $U(\mathfrak{g})$
contains some $w_0^\mathfrak{p}$.
For such $L$, the assertions of both Conjectures~\ref{conj7} 
and \ref{conj7-2} are true.
\end{theorem}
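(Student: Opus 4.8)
The plan is to reduce Theorem~\ref{thm1-n2} to the type $A$ case treated in Subsections~\ref{s4.05}--\ref{s4.3}. The crucial point is that the whole argument of Section~\ref{s4} does not really use that $W$ itself is of type $A$; what it uses is (i) that the two-sided cell $\mathcal{J}$ contains the longest element $w_0^{\mathfrak{p}}$ of some parabolic, which by \cite[Lemma~9.15(a)]{Ja} pins down $\mathrm{GKdim}$ and guarantees the existence of a singular block $\mathscr{M}_{\chi'}$ whose identity functor $\theta_e^{\chi'}$ lies in $\mathcal{J}$; (ii) that the intersection of a left and a right cell inside $\mathcal{J}$ behaves like in type $A$, so that $\theta_e^{\chi'}$ is the \emph{only} projective endofunctor of $\mathscr{M}_{\chi'}$ in $\mathcal{J}\cap\mathscr{P}^\Xi$; and (iii) Soergel's combinatorial description, which lets us transport statements about $\mathbf{X}^{L'}$ to statements about Harish-Chandra bimodules via \cite[Theorem~5]{KhM}. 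So the first step is to isolate exactly these three inputs and check that ``$\mathfrak{g}$ classical and $\mathcal{J}$ contains some $w_0^{\mathfrak p}$'' suffices to supply them; in particular one must check the singleton property of left-cell $\cap$ right-cell inside $\mathcal{J}$, which for classical types is known and is where the ``classical'' hypothesis is genuinely used.

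Next I would run the reductions of Subsections~\ref{s4.05} and \ref{s4.1} verbatim: first use Lemma~\ref{lem-nice} to reduce to $L$ with nice central character (this step is type-independent), then follow Subsection~\ref{s4.1} to pass from $L$ to a simple module $L'$ living in the singular block $\mathscr{M}_{\chi'}$ attached to $\mathcal{J}$, chosen as a simple quotient of $\theta L$ for a suitable $\theta$ in the left cell adjoint to the right cell of $\mathrm{Ann}_{U(\mathfrak g)}(L)$. One checks, exactly as in Lemma~\ref{lem-n853}, that the indecomposable projective endofunctors of $\mathscr{M}^{\mathbf I}_{\chi'}$ not killing $L'$ are self-equivalences $\theta_{\mu_1,\nu}$ with $\nu\sim\mu_i$; this argument only used the block structure of category $\mathscr{O}$ and the classification of projective functors, so it carries over. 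The point of this step is that after it $\mathrm{add}(\mathscr{P}^\Xi L')$ has a single indecomposable object in each block-degree lying over $\mathcal{J}\cap\mathscr{P}^\Xi$, which is the hypothesis of the Kostant-positivity argument.

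Then I would reprove Lemma~\ref{lem08}, weak Kostant positivity of $L'$: this used only (a) that $\mathrm{Ann}_{U(\mathfrak g)}(L')=\mathrm{Ann}_{U(\mathfrak g)}(L(\mu_1))$ for a dominant $\mu_1$ (which follows from $\mathcal J$ containing $\theta_e^{\chi'}$ and the singleton property), (b) that $L(\mu_1)$ is a quotient of a projective Verma and hence Kostant positive by \cite[Subsection~6.9]{Ja}, and (c) the multiplicity computation via counting copies of $\theta_e^{\chi'}$ as a summand of $V\otimes_{\mathbb C}{}_-$. None of this is type-specific. With weak Kostant positivity in hand, \cite[Theorem~5]{KhM} (the adaptation of \cite[Theorem~5.1]{MiSo}) gives an equivalence of $\mathscr{P}^\Xi$-birepresentations ${}^\Xi\mathbf X^{L'}\simeq {}^\Xi\mathbf X^{L(\lambda)}$, hence ${}^\Xi\mathbf Y^{L'}\simeq{}^\Xi\mathbf Y^{L(\lambda)}$; since $\mathbf Y^{L(\lambda)}$ and ${}^\Xi\mathbf Y^{L(\lambda)}$ are simple transitive by \cite[Theorem~22]{MM1}, \cite[Proposition~22]{MM2}, \cite[Theorem~18]{MM16} (cell birepresentations of Soergel bimodules of a parabolic-longest-element cell are classified and are simple transitive — here again one uses that $\mathcal J$ contains $w_0^{\mathfrak p}$), we get ${}^\Xi\mathbf Y^{L'}$ simple transitive. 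Finally, undoing the self-equivalences of Lemma~\ref{lem-n853} as at the end of Subsection~\ref{s4.3} gives that $\mathbf Y^L$ is simple transitive, i.e. Conjecture~\ref{conj7}; and Conjecture~\ref{conj7-2} for $\mathcal X_L$ follows exactly as in Subsection~\ref{s4.2} from Theorem~\ref{thm7-9}, since $\mathcal X_L\cap\mathrm{Irr}(\mathfrak g)_{\chi'}$ consists of the $\theta_{\mu,\nu}L'$ with $\theta_{\mu,\nu}$ equivalences and is visibly one $\triangleright$-class.

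The main obstacle I expect is verifying that the two ``special type $A$ features'' used throughout Section~\ref{s4} — the singleton property of left $\cap$ right cell inside $\mathcal J$, and the fact that $\mathcal J$ contains the longest element of a \emph{parabolic} subgroup (not merely of some reflection subgroup) — really do hold for the relevant two-sided cells of classical Weyl groups under the hypothesis of the theorem, and that the simple-transitivity input \cite[Theorem~22]{MM1} etc. applies to the corresponding cell $2$-representation of Soergel bimodules outside type $A$. The hypothesis ``$\mathcal J$ contains some $w_0^{\mathfrak p}$'' is exactly designed to secure the second; the first is a genuine statement about Kazhdan--Lusztig cells in types $B$, $C$, $D$ that must be invoked (or proved) carefully, and this is the one place where ``classical'' cannot obviously be dropped. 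Everything else is a routine transcription of Section~\ref{s4} with $W$ replaced by $W_\lambda$ via Soergel's combinatorial description, which is already known to be compatible with degree-$\Xi$ projective functors (Subsection~\ref{s2.5}).
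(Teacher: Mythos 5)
Your plan is to transcribe Section~\ref{s4} to classical types, and you correctly identify the load-bearing hypothesis: that the intersection of a left and a right cell inside $\mathcal{J}$ is a singleton, so that $\theta_e^{\chi'}$ is the \emph{only} indecomposable projective endofunctor of the singular block in $\mathcal{J}\cap\mathscr{P}^\Xi$ not killing $L'$. You propose to ``invoke (or prove) this carefully'' for types $B$, $C$, $D$. This is where the proposal breaks: the singleton property is genuinely false outside type $A$. For a two-sided cell containing $w_0^{\mathfrak{p}}$ in a classical Weyl group, the diagonal $\mathcal{H}$-cell at the corresponding singular block is in bijection with a finite group $\Gamma$ (typically $(\mathbb{Z}/2)^k$), so there are in general several indecomposable projective endofunctors of $\mathscr{O}_{\chi'}$ in $\mathcal{J}\cap\mathscr{P}^\Xi$ that do not annihilate $L(\lambda')$. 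This destroys the key step of Lemma~\ref{lem08}: the multiplicity of $V$ in $\mathcal{L}(L(\lambda'),L(\lambda'))$ no longer equals the multiplicity of $\theta_e^{\chi'}$ in $V\otimes_{\mathbb{C}}{}_-$, so weak Kostant positivity of $L'$ (and even Kostant positivity of $L(\lambda')$) is not available, and the application of \cite[Theorem~5]{KhM} collapses. It also undermines the ``strange submodule'' exclusion in Lemma~\ref{lem-n853}, whose proof explicitly uses that $\theta_{\mu_1,\mu_1}$ is the only endofunctor of $\mathscr{O}_{\mu_1}$ in $\mathcal{J}$ not killing $L(\mu_1)$. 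Note also that the equivalence ${}^\Xi\mathbf{X}^{L'}\simeq{}^\Xi\mathbf{X}^{L(\lambda)}$ you aim for is precisely the kind of na{\"i}ve highest-weight modeling that is known to fail outside type $A$ by \cite{MMMTZ2}.

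The actual argument in Subsection~\ref{s2.19} avoids all of this. It replaces the singleton property by Lemma~\ref{lem-new557}: modulo the biideal generated by lower cells, \emph{every} indecomposable projective endofunctor of the singular block is invertible, because for classical $\mathfrak{g}$ the asymptotic bicategory of any two-sided cell is the bicategory of $\Gamma$-graded vector spaces (\cite[Section~8]{MMMTZ2}; this is where ``classical'' is used, not in any cell-combinatorial singleton claim). Consequently $\theta' L'$ is semi-simple of finite length for all relevant $\theta'$, so the simple constituents at $\chi'$ visibly form one $\triangleright$-class and Theorem~\ref{thm7-9} gives Conjecture~\ref{conj7-2}; Theorem~\ref{thm7-11} then gives transitivity of $\mathbf{Y}^L$, and simple transitivity follows because the semi-simple category $\mathbf{Y}^L(\mathtt{i}_{\chi'})$ is simple transitive over $\mathscr{P}(\mathtt{i}_{\chi'},\mathtt{i}_{\chi'})$ and every socle constituent of every object of $\mathbf{Y}^L$ translates back to it non-trivially by adjunction. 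If you want to repair your write-up, the fix is to swap the Kostant-positivity/Harish-Chandra-bimodule step for this invertibility-at-the-wall argument.
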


\subsection{Proof of Theorem~\ref{thm1-n}}\label{s2.18}

We follow the idea of the proof of \cite[Theorem~23]{CCM21},
which is also utilized, in a slightly disguised way, in Section~\ref{s4}. 
Here is a sketch of this idea:
\begin{itemize}
\item Due to our assumption on $\mathcal{J}$, we can translate $L$
to a singular block whose singularity corresponds to our longest element.
\item The indecomposable projective endofunctors of that singular 
block that do not kill $L$ form a group (modulo projective functors
that kill $L$), in particular, they are invertible. So, for all
such projective functors, the claim of Theorem~\ref{thm1-n} is straightforward.
\item The assertion of Theorem~\ref{thm1-n} is equivalent to saying
that $V\otimes_{\mathbb{C}} L$
has no strange submodules. If we assume that this is wrong, then we can translate
a strange submodule of $V\otimes_{\mathbb{C}} L$
to the singular block from above, which
leads to a contradiction with the previous item.
\end{itemize}

The first item on the above list goes mutatis mutandis as in Subsection~\ref{s4.1}.
We note that, outside type $A$, two-sided KL-cells do not have to contain 
any longest element for some parabolic subgroup. However, we explicitly assume
this for our $\mathcal{J}$, which allows us to use the approach of 
Subsection~\ref{s4.1}. This approach leads to the following output:
starting from $L$ with some central character $\chi$ corresponding
to a dominant weight $\lambda$, we find a singular weight
$\lambda'$ with the corresponding central character $\chi'$
such that the singularity $W'$ of $\lambda'$ in 
$W_{\lambda'}$ is a parabolic subgroup and is isomorphic to the Weyl group 
of our longest element in the formulation. We also find a simple module $L'$ with the 
same annihilator as $L(\lambda')$ and such that 
the additive closure of all $\theta L$, where $\theta\in\mathcal{J}$,
coincides with the the additive closure of all $\theta L'$, 
where $\theta\in\mathcal{J}$. Therefore we can forget about $L$
and concentrate on $L'$.

For the second item on the above list, let us assume that $\lambda'$
is a singular weight with singularity $W'$ (which is a parabolic subgroup
of $W_{\lambda'}$). Let $\chi'$ be the central character of $L(\lambda')$. 
Consider the bicategory $\mathscr{P}(\mathtt{i}_{\chi'},\mathtt{i}_{\chi'})$
and the biideal $\mathscr{J}_{\chi'}$ in it generated by all 
indecomposable objects that are not two-sided equivalent to the 
identity.

\begin{lemma}\label{lem-new557}
Any indecomposable object of 
$\mathscr{P}(\mathtt{i}_{\chi'},\mathtt{i}_{\chi'})/\mathscr{J}_{\chi'}$
is invertible.
\end{lemma}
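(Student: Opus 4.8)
The plan is to identify the surviving indecomposables and then deduce invertibility by refining the argument in the proof of Lemma~\ref{lem-n853}. By the classification of projective functors (Subsection~\ref{s2.3}), an indecomposable projective endofunctor of $\mathscr{M}_{\chi'}$ has the form $\theta_{\lambda',\nu}$ with $\nu\in W\cdot\lambda'$ being $W'_{\lambda'}$-anti-dominant, and it survives as a nonzero indecomposable object in $\mathscr{P}(\mathtt{i}_{\chi'},\mathtt{i}_{\chi'})/\mathscr{J}_{\chi'}$ precisely when it lies in $\mathcal{J}$; since $\theta\circ\mathbbm{1}\cong\theta$ for every such $\theta$, the cell $\mathcal{J}$ is the maximal two-sided cell of $\mathscr{P}(\mathtt{i}_{\chi'},\mathtt{i}_{\chi'})$ and, by the construction of $\chi'$ (Subsections~\ref{s4.1} and~\ref{s2.18}), it contains $\mathbbm{1}=\theta_e^{\chi'}$. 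Using Soergel's combinatorial description (Subsection~\ref{s2.5}) I would reduce to the case where $\lambda'$ is integral, so that $W=W_{\lambda'}$ and $W'=W'_{\lambda'}$ is a parabolic subgroup of $W$ with longest element $w_0^{W'}=w_0^{\mathfrak{p}}$; the indecomposables of $\mathcal{J}$ of degree different from $\Xi$ are obtained from the degree-$\Xi$ ones by composition with invertible functors, and under translation to the regular block the latter correspond to the indecomposable Soergel bimodules in the $H$-cell of $w_0^{\mathfrak{p}}$ inside the two-sided cell of $w_0^{\mathfrak{p}}$.

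Next, for such an indecomposable $\theta$ I would show that $\theta^*\circ\theta$ and $\theta\circ\theta^*$ each have $\mathbbm{1}$ as a direct summand of multiplicity one, all other summands lying in $\mathscr{J}_{\chi'}$; invertibility of $\overline{\theta}$ then follows immediately. For this I would first establish, exactly along the lines of the proof of Lemma~\ref{lem-n853}, that every projective endofunctor of $\mathscr{M}_{\chi'}$ lying in $\mathcal{J}$ restricts to a self-equivalence of $\mathscr{M}^{\mathbf{I}}_{\chi'}$, where $\mathbf{I}=\mathrm{Ann}_{U(\mathfrak{g})}(L(\lambda'))$. Granting this, $\theta^*\circ\theta$ restricts to $\mathbbm{1}$ on $\mathscr{M}^{\mathbf{I}}_{\chi'}$; any direct summand of $\theta^*\circ\theta$ that lies in $\mathcal{J}$ also restricts to a nonzero (self-equivalence) endofunctor of $\mathscr{M}^{\mathbf{I}}_{\chi'}$, hence to a direct summand of the indecomposable $\mathbbm{1}$, so it must itself be $\mathbbm{1}$ and occur with multiplicity one, while all remaining summands of $\theta^*\circ\theta$ lie in $\mathscr{J}_{\chi'}$. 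The same argument applies to $\theta\circ\theta^*$, and composing with the invertible functors of the first paragraph covers the remaining degrees.

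The hard part is the input just invoked, namely the analogue of Lemma~\ref{lem-n853} in this setting: showing that each projective endofunctor of $\mathscr{M}_{\chi'}$ in $\mathcal{J}$ becomes a self-equivalence on $\mathscr{M}^{\mathbf{I}}_{\chi'}$. In type $A$ every $H$-cell is a singleton, so $\theta_e^{\chi'}$ is the only such functor and there is essentially nothing to prove; in classical type the $H$-cell of $w_0^{\mathfrak{p}}$ may be strictly larger, so $\mathscr{P}(\mathtt{i}_{\chi'},\mathtt{i}_{\chi'})/\mathscr{J}_{\chi'}$ genuinely has several indecomposable objects, and the step in the proof of Lemma~\ref{lem-n853} that rules out unwanted composition factors by using the uniqueness of the surviving endofunctor on $L(\mu_1)$ must be replaced by an argument that keeps track of the whole $H$-cell. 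The hypothesis that $\mathcal{J}$ contains the longest element of a parabolic subgroup is exactly what makes the reduction of Subsection~\ref{s4.1} available — translating $L$ to a singular block whose blocking pattern matches $w_0^{\mathfrak{p}}$, with $W'$ a genuine parabolic — so that the $H$-cell in question is the one attached to a parabolic subgroup and the combinatorics is under control; for a two-sided cell of a classical Weyl group that contains no longest element of a parabolic, we do not know how to carry this out.
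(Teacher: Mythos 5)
Your setup is fine as far as it goes: the identification of the surviving indecomposables with the elements of $\mathcal{J}$ meeting $\mathscr{P}(\mathtt{i}_{\chi'},\mathtt{i}_{\chi'})$, and the reduction to the degree-$\Xi$ part by composing with the invertible functors of Lemma~\ref{lem-n853}, both match the first step of the paper's argument. But the core of your proof --- that for each surviving indecomposable $\theta$ one has $\theta^*\circ\theta\cong\mathbbm{1}\oplus(\text{summands in }\mathscr{J}_{\chi'})$ --- is exactly the statement you then concede you cannot establish, so the proposal has a genuine gap rather than an alternative route. The mechanism of Lemma~\ref{lem-n853} really does break down here: its key step rules out unwanted constituents of $\theta^*L(\nu)$ precisely because in type $A$ the identity is the \emph{only} member of the relevant $H$-cell among projective endofunctors of the singular block, and once the $H$-cell of $w_0^{\mathfrak{p}}$ has several elements (as it does in types $B$, $C$, $D$), the top and socle of $\theta^*L(\nu)$ may legitimately contain several $L(\nu')$ with $\nu'\sim\mu_1$, so no amount of ``keeping track of the whole $H$-cell'' by that multiplicity-one argument will force the splitting you need. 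There is also a circularity lurking in your intermediate step: knowing that each $\theta$ in the $H$-cell restricts to a self-equivalence of $\mathscr{M}^{\mathbf{I}}_{\chi'}$ is essentially equivalent to the invertibility you are trying to prove, and even granted it, passing from ``the restriction of $\theta^*\theta$ is an equivalence'' to the direct-sum decomposition of $\theta^*\theta$ as a projective functor on all of $\mathscr{M}_{\chi'}$ requires a separate argument.

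The paper closes this gap by a different and genuinely non-elementary input. Translating onto and off the wall, together with the positivity of the grading on homomorphisms between indecomposable projective functors (\cite[Proposition~4.1]{CM} and \cite{Ba}), identifies $\mathscr{P}^\Xi(\mathtt{i}_{\chi'},\mathtt{i}_{\chi'})/(\mathscr{P}^\Xi(\mathtt{i}_{\chi'},\mathtt{i}_{\chi'})\cap\mathscr{J}_{\chi'})$ with the asymptotic bicategory attached to the $H$-cell of $\mathcal{J}^\Xi$ containing $w_0^{\mathfrak{p}}$. One then invokes \cite[Section~8]{MMMTZ2}: for classical $\mathfrak{g}$ every such asymptotic bicategory is biequivalent to the bicategory of finite dimensional vector spaces graded by a finite group, in which all indecomposable objects are invertible. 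This is where the hypothesis that $\mathfrak{g}$ is classical is actually used, and it is an external classification result, not something recoverable from the $L(\mu_1)$-style bookkeeping of Lemma~\ref{lem-n853}. If you want to complete your write-up, this identification with the asymptotic category is the missing ingredient you should supply in place of your second paragraph.
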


\begin{proof}
Arguments similar to the ones used in the proof of Lemma~\ref{lem-n853}
reduce the necessary statement to the similar statement for 
$\mathscr{P}^\Xi(\mathtt{i}_{\chi'},\mathtt{i}_{\chi'})/
(\mathscr{P}^\Xi(\mathtt{i}_{\chi'},\mathtt{i}_{\chi'})\cap\mathscr{J}_{\chi'})$.

We can translate singular projective functors out of the wall all the way
to the corresponding regular blocks. We can also translate back. 
Translating out and then back gives $|W'|$ copies of what
we started with, with one copy in degree zero and all other copies 
shifted in positive degrees, see \cite[Proposition~4.1]{CM}.
Since the endomorphism algebra of the multiplicity-free direct sum
of all  indecomposable  objects of the bicategory of projective 
functors with tops concentrated in degree zero
is positively graded, see \cite{Ba}, it follows
that $\mathscr{P}^\Xi(\mathtt{i}_{\chi'},\mathtt{i}_{\chi'})/
(\mathscr{P}^\Xi(\mathtt{i}_{\chi'},\mathtt{i}_{\chi'})\cap\mathscr{J}_{\chi'})$
is biequivalent to the asymptotic category associated with the $H$-cell 
of $\mathcal{J}^\Xi$ which contains our longest element (for  the definition of
this asymptotic category, see e.g. \cite[Subsection~3.2]{MMMTZ2}). 

As explained in \cite[Section~8]{MMMTZ2},
since we assume $\mathfrak{g}$ to be classical, all the asymptotic 
bicategories that appear are biequivalent to the bicategory of 
finite dimensional vector spaces graded by a finite group.
In the latter, all indecomposable objects are invertible.
The claim of the lemma follows.
\end{proof}

Finally, to justify the last item on the above list, let $M$ be a strange
submodule of $V\otimes_{\mathbb{C}} L$. Then, for any projective functor 
$\theta$, the module $\theta M$ cannot have simple submodules of the same 
GK-dimension as $L$. Indeed, if $\tilde{L}$ were such a submodule,
then, by adjunction
\begin{displaymath}
0\neq \mathrm{Hom}_\mathfrak{g}(\tilde{L},\theta M)\cong
\mathrm{Hom}_\mathfrak{g}(\theta^*\tilde{L},M)
\end{displaymath}
and we would get a contradiction, since all quotients of $\theta^*\tilde{L}$
have the same GK-dimension as $L$ while all quotients of $M$
by non-zero submodules have strictly smaller GK-dimension.

Suppose that $M'$ is a non-zero submodule of  $\theta M$.
By the additivity of the Bernstein number, $M'$ can  only have finitely many 
simple subquotients of the same GK-dimension as $L$, say $L_1$, $L_2$,\dots,
$L_k$ (counted with the respective multiplicities). 
Let $I_i$ be the indecomposable injective hull of $L_i$. The embeddings
$L_i\subset I_i$ give rise to a map from $M'$ to 
$I_1\oplus I_2\oplus\dots \oplus I_k$. This map cannot be injective since
none of the $L_i$ is a submodule of $M'$ by the previous paragraph. 
The kernel of this map is thus a strange submodule of $M'$.

Consequently, any non-zero submodule of 
$\theta M$ has a strange submodule. This means that we can translate
our $M$ to our singularity $\chi'$ and obtain that 
$\theta' L'$ must have a strange submodule for some
$\theta'\in \mathscr{P}^\Xi(\mathtt{i}_{\chi'},\mathtt{i}_{\chi'})$.
At the same time, by Lemma~\ref{lem-new557}, all indecomposable
summands of $\theta'$ are invertible, which implies that 
$\theta' L'$ is semi-simple of finite length, a contradiction.
This completes the proof of Theorem~\ref{thm1-n}.

\subsection{Proof of Theorem~\ref{thm1-n2}}\label{s2.19}
 
In the setup of Theorem~\ref{thm1-n2}, the assertion of 
Conjecture~\ref{conj7-2} follows from Theorem~\ref{thm7-9}.
Indeed, if we look at the proof  of Theorem~\ref{thm1-n},
the application of projective functors to $L'$ produces
semi-simple  modules of finite length, if we restrict to the central  character $\chi'$.
The simple  constituents of these semi-simple modules, obviously,
form an equivalence class with respect to $\triangleright$.
Hence the assumptions of Theorem~\ref{thm7-9} are satisfied, so
this theorem applies.

To prove Conjecture~\ref{conj7} in the setup of Theorem~\ref{thm1-n2},
we note that Theorem~\ref{thm7-11} already guarantees that 
$\mathbf{Y}^L$ is transitive. Since the underlying category 
$\mathbf{Y}^L(\mathtt{i}_{\chi'})$ is semi-simple, 
$\mathbf{Y}^L(\mathtt{i}_{\chi'})$ is simple transitive
as a birepresentation of $\mathscr{P}(\mathtt{i}_{\chi'},\mathtt{i}_{\chi'})$. 
Since we also have $\mathbf{Y}^L=\mathbf{Y}^{L'}$ by Theorem~\ref{thm7-11}, 
any socle constituent of any object in $\mathbf{Y}^L$ can be translated,
using adjunction, back to $\mathbf{Y}^L(\mathtt{i}_{\chi'})$
in a non-zero way. If $\mathbf{Y}^L$ were not simple
transitive, the kernel of the projection from $\mathbf{Y}^L$ onto its
unique simple transitive quotient would kill some socle constituent
of some object in $\mathbf{Y}^L$. Translating to $\mathbf{Y}^L(\mathtt{i}_{\chi'})$
we would be forced to kill a non-zero object of this category,
contradicting its simple transitivity. This implies that 
already $\mathbf{Y}^L$ is simple transitive.

We note that the result proved in the previous paragraph can also 
be obtained using the results of \cite[Subsection~4.8]{MMMTZ}.

\section{Strange subquotients, Serre quotients and rough structure}\label{s9}

\subsection{Strange subquotients}\label{s9.1}

Let $L$ be a simple $\mathfrak{g}$-module and $V$ a finite dimensional
$\mathfrak{g}$-module. The module 
$V\otimes_\mathbb{C}L$, clearly, does not have any 
strange quotients. Furthermore, Theorem~\ref{thm1} essentially says 
that $V\otimes_\mathbb{C}L$ does  not have any strange submodules
provided that $L$ is holonomic.
A priori, we cannot rule out existence of strange subquotients.
However, inspired by Theorem~\ref{thm1}, we propose the following conjecture:

\begin{conjecture}\label{conj-strange}
Strange subquotients of $V\otimes_\mathbb{C}L$ do not exist. 
\end{conjecture}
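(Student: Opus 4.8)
\emph{Strategy.} The plan is to push the argument of Theorem~\ref{thm1} from submodules to arbitrary subquotients, and simultaneously to dispense with holonomicity by arguing at the level of associated varieties rather than with the crude GK-dimension bound. Throughout put $d=\mathrm{GKdim}(L)$ and let $\mathcal{J}$ be the two-sided KL-cell containing the left cell of $\mathrm{Ann}_{U(\mathfrak{g})}(L)$, with $\theta_{\mathcal{J}}$ a multiplicity-free sum of the indecomposable projective functors in $\mathcal{J}$. First I would reduce, exactly as in Sections~\ref{s2.7}--\ref{s2.9}, to the case of $V\otimes_{\mathbb{C}}L$ for an arbitrary finite dimensional $V$ (using that $\theta_{\mathcal{J}}(V\otimes_{\mathbb{C}}L)$, after discarding the functors from strictly larger cells that annihilate $L$, is a direct sum of modules $\theta_j L$ with $\theta_j\in\mathcal{J}$), and to a putative strange subquotient $M$ with $\mathrm{GKdim}(M)=d$.

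\emph{Normalization.} Given a strange subquotient $M=N_2/N_1$ with $N_1\subsetneq N_2\subseteq V\otimes_{\mathbb{C}}L$, note that $M$ is noetherian, hence has a largest submodule $M'$ of GK-dimension $<d$; replacing $M$ by $M/M'$ one checks, using the additivity of the Bernstein number, that $M/M'$ is again a strange subquotient of $V\otimes_{\mathbb{C}}L$ for which \emph{every} nonzero submodule has GK-dimension $d$, \emph{every} proper quotient has GK-dimension $<d$, and consequently $\mathrm{soc}(M/M')=0$. It is worth recording that, in the Serre quotient $\mathscr{C}$ of Subsection~\ref{s9.3}, such a normalized $M$ becomes a \emph{simple} object, and that Conjecture~\ref{conj-strange} is equivalent to the statement that every composition factor of the image of $V\otimes_{\mathbb{C}}L$ in $\mathscr{C}$ is of the form $\overline{L'}$ for a genuine simple $\mathfrak{g}$-module $L'$.

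\emph{The dichotomy.} I would then try to contradict the coexistence of the two statements (A) $\theta_{\mathcal{J}}M=0$ and (B) $\theta_{\mathcal{J}}M\neq0$. Statement (B) should come from associated varieties: since $M$ is a subquotient of $V\otimes_{\mathbb{C}}L$ we have $\mathrm{Ch}(M)\subseteq\mathrm{Ch}(V\otimes_{\mathbb{C}}L)=\mathrm{Ch}(L)$, and $\mathrm{GKdim}(M)=d=\dim\mathrm{Ch}(L)$ forces the top-dimensional components of $\mathrm{Ch}(M)$ to be components of $\mathrm{Ch}(L)$; on the other hand $\theta_{\mathcal{J}}L\neq0$ (the Duflo element of the left cell of $\mathrm{Ann}_{U(\mathfrak{g})}(L)$ carries a natural transformation from the identity whose evaluation at $L$ is non-zero, cf. Subsection~\ref{s2.9}), which should force $\mathrm{Ch}(L)$, and hence $\mathrm{Ch}(M)$, to meet the dense orbit of $V(\mathrm{Ann}_{U(\mathfrak{g})}(L))$, whence $\theta_{\mathcal{J}}M\neq0$. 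Statement (A) is the general-type analogue of Corollary~\ref{cor-2.4.4.2}: a strange subquotient of $V\otimes_{\mathbb{C}}L$ is killed by every indecomposable projective functor from $\mathcal{J}$, since otherwise adjunction would produce a simple subquotient of $M$ of GK-dimension $d$, contradicting strangeness.

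\emph{Main obstacle.} The difficulty is entirely in (A). In type $A$ --- and in the ``classical with $w_0^{\mathfrak{p}}$'' situation of Section~\ref{s2.17s} --- (A) is Corollary~\ref{cor-2.4.4.2}, proved by modelling ${}^{\Xi}\mathbf{Y}^{L}$ on a cell birepresentation coming from category $\mathscr{O}$, where the action of cell $1$-morphisms by projective bimodules (as in \cite[Proposition~4.15]{MMMTZ}) exhibits every surviving subquotient in the top of some $V''\otimes_{\mathbb{C}}L$. In general type this $\mathscr{O}$-model is not available (cf. the discussion after Conjecture~\ref{conj7} and \cite{MMMTZ2}): one would first need Conjecture~\ref{conj7} and then an upgrade of the modelling to the level of (co)algebra $1$-morphisms and their (co)module categories; in effect, (A) amounts to knowing that the simple objects of the abelianized cell birepresentation with apex $\mathcal{J}$ are realizable as simple $\mathfrak{g}$-modules, which is precisely the structural question the paper flags as open. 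The alternative of imitating the inverse-limit argument of Theorem~\ref{thm1} directly --- writing $\theta_{\mathcal{J}}M$ as a limit over $\theta_{\mathcal{J}}^{*}$ applied to the filtered diagram of quotients of $M$ by nonzero submodules, inside a completed category $\mathscr{M}^{m}$ --- breaks down because $\theta_{\mathcal{J}}^{*}\in\mathcal{J}$ annihilates those quotients only when their GK-dimension $<d$ lies below the GK-dimension attached to $\mathcal{J}$, i.e. only when $L$ is holonomic. Reconciling these two lines of attack --- general type without holonomicity --- is where I expect the real work to lie.
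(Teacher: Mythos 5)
The statement you are asked to prove is labelled a \emph{conjecture} in the paper, and the paper does not prove it: Subsection~\ref{s9.1} explicitly says that strange subquotients cannot a priori be ruled out, and the surrounding machinery (normal forms, Proposition~\ref{prop-str2}, the Serre quotients of Subsections~\ref{s9.2}--\ref{s9.3}) is built precisely to work around their possible existence. So there is no proof in the paper to compare against, and your proposal, which is candid about being a strategy sketch, correctly reproduces the known partial results: the normalization to normal form matches Subsection~\ref{s9.1}, statement (A) is Proposition~\ref{prop-str2} for holonomic $L$ and Corollary~\ref{cor-2.4.4.2} in type $A$, and Theorem~\ref{thm1} rules out strange \emph{submodules} in the holonomic case.

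However, your claim that ``the difficulty is entirely in (A)'' is wrong, and this matters. Statement (B), that $\theta_{\mathcal{J}}M\neq 0$ for a strange \emph{subquotient} $M$, is itself a genuine gap. For a strange submodule $K\subseteq\theta L$ the paper gets $\theta^*K\neq 0$ for free from adjunction; for a subquotient there is no such argument, and your substitute --- that $\mathrm{Ch}(M)$ having a top-dimensional component in common with $\mathrm{Ch}(L)$, together with $\mathrm{Ch}(L)$ meeting the dense orbit, forces $\theta_{\mathcal{J}}M\neq 0$ --- is asserted, not proved. No such criterion (``characteristic variety meets the dense orbit $\Rightarrow$ not annihilated by the cell functors'') appears in the paper or follows from the results quoted; indeed Proposition~\ref{prop-str2} shows that for holonomic $L$ every strange subquotient satisfies $\theta_{\mathcal{J}}M=0$ while $\mathrm{Ch}(M)\subseteq\mathrm{Ch}(L)$ still holds with equality of dimensions, so any such criterion would have to be sharper than a containment of top-dimensional components. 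The decisive sanity check is this: if (B) held for subquotients, then combining it with Proposition~\ref{prop-str2} would already prove the conjecture for all holonomic $L$, and combining it with Corollary~\ref{cor-2.4.4.2} would prove it in type $A$ --- yet the paper leaves the conjecture open even in those settings. A small additional slip: the Serre quotient in which a normalized strange module becomes a simple object is $\mathscr{A}/\mathscr{B}$, not ``$\mathscr{C}$''; in $\mathscr{A}/\mathscr{C}$ a strange subquotient becomes zero (at least for holonomic $L$), which is exactly why the rough-structure theorem can ignore it.
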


Let $M$ be a strange subquotient of $V\otimes_\mathbb{C}L$.
Let $N$ denote the sum of all submodules of $M$ whose 
 GK-dimension  is strictly smaller than 
$\mathrm{GKdim}(L)$. Since $V\otimes_\mathbb{C}L$ and hence also
$M$ are noetherian, $N$ is finitely generated. Since each of the
finitely many generators of $N$ belongs to a submodule of $M$ whose 
GK-dimension is strictly smaller than 
$\mathrm{GKdim}(L)$, it follows that $\mathrm{GKdim}(N)< \mathrm{GKdim}(L)$.
Therefore the subquotient $M/N$ is also strange and has the
property that any submodule of $M/N$ has GK-dimension $\mathrm{GKdim}(L)$.
We will  say that $M/N$ is a strange subquotient in {\em normal form}.
Note that  strange subquotients in normal form do not have simple
submodules at all.

\begin{proposition}\label{prop-str2}
Let $L$ be holonomic, $M$  a strange subquotient of 
$V\otimes_\mathbb{C}L$ and $\theta$ an indecomposable projective functor
from the $\mathcal{J}$-cell corresponding to $\mathrm{Ann}_{U(\mathfrak{g})}(L)$.
Then $\theta M=0$.
\end{proposition}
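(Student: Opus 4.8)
I would transcribe, essentially verbatim, the argument used to prove Theorem~\ref{thm1} in Subsection~\ref{s2.8}, now applied to the subquotient $M$ itself rather than to a submodule of $\theta_{\mathcal J}L$. First I would reduce to the case where $M$ is in normal form. Writing $M/N$ for the normal form of $M$ from Subsection~\ref{s9.1}, the submodule $N$ is itself a subquotient of $V\otimes_{\mathbb C}L$ with $\mathrm{GKdim}(N)<\mathrm{GKdim}(L)$, so $\theta N=0$ by the vanishing already exploited in the proof of Theorem~\ref{thm1}: an indecomposable projective functor from the two-sided cell $\mathcal J$ of $\mathrm{Ann}_{U(\mathfrak g)}(L)$ annihilates every subquotient of any $V'\otimes_{\mathbb C}L$ whose Gelfand--Kirillov dimension is strictly smaller than $\mathrm{GKdim}(L)$, since, $L$ being holonomic, $\mathrm{GKdim}(L)$ is the Gelfand--Kirillov dimension attached to $\mathcal J$. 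Applying the exact functor $\theta$ to $0\to N\to M\to M/N\to 0$ gives $\theta M\cong\theta(M/N)$, so I would replace $M$ by $M/N$ and assume $M$ is strange in normal form; in particular, by Subsection~\ref{s9.1}, $M$ has no simple submodule, every nonzero submodule of $M$ has Gelfand--Kirillov dimension $\mathrm{GKdim}(L)$, and hence every quotient of $M$ by a nonzero submodule has Gelfand--Kirillov dimension strictly smaller than $\mathrm{GKdim}(L)$.

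Then I would run the inverse-limit argument from Subsection~\ref{s2.8}. Since $M$ has no simple submodule, the intersection of all its nonzero submodules is zero --- otherwise that intersection would be the unique minimal, hence simple, submodule of $M$. Letting $\mathcal P$ denote the filtered diagram of all quotients of $M$ by nonzero submodules together with the natural projections, the canonical map $M\to\lim_{\leftarrow}\mathcal P$ is therefore injective; as in Subsection~\ref{s2.8}, this limit must be formed inside one of the subcategories $\mathscr M^k$ (which have all limits and contain $M$ together with all its quotients), and I would suppress this technicality and refer to the treatment there. Every object of $\mathcal P$ is a subquotient of $V\otimes_{\mathbb C}L$ of Gelfand--Kirillov dimension strictly smaller than $\mathrm{GKdim}(L)$, hence is killed by $\theta$ by the vanishing above. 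As $\theta$ has a biadjoint it preserves limits, so $\theta(\lim_{\leftarrow}\mathcal P)\cong\lim_{\leftarrow}\theta\mathcal P=\lim_{\leftarrow}0=0$; applying exactness of $\theta$ to $M\hookrightarrow\lim_{\leftarrow}\mathcal P$ yields $\theta M\hookrightarrow 0$, that is $\theta M=0$.

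The only non-formal ingredient, and the one place where holonomicity of $L$ is genuinely used, is the vanishing invoked above; this is precisely the fact already used in the proof of Theorem~\ref{thm1}, and I expect it to be the main point to justify with care. The argument for it is: $\theta X$ is again a subquotient of some $W\otimes_{\mathbb C}L$, which has a finite-length semisimple top, so a nonzero $\theta X$ would have a simple quotient $S$ with $\mathrm{GKdim}(S)\le\mathrm{GKdim}(X)<\mathrm{GKdim}(L)$; by adjunction $\mathrm{Hom}_{\mathfrak g}(\theta X,S)\cong\mathrm{Hom}_{\mathfrak g}(X,\theta^* S)$, so $\theta^* S\neq0$. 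But $\theta^*$ lies in $\mathcal J$, while the Gelfand--Kirillov dimension attached to the two-sided cell of $\mathrm{Ann}_{U(\mathfrak g)}(S)$ is at most $\mathrm{GKdim}(S)<\mathrm{GKdim}(L)$, the latter being (by holonomicity of $L$) the one attached to $\mathcal J$; since this dimension is monotone along $\geq_J$, the cell of $\mathrm{Ann}_{U(\mathfrak g)}(S)$ is not $\geq_J\mathcal J$, so $\theta^* S=0$ by \cite[Subsections~10.9 and~10.11]{Ja} (reduced, via Duflo's theorem, to a highest weight module with the same annihilator). The remaining difficulty is the purely categorical one that $\mathscr M$ has no arbitrary limits, which is dealt with exactly as in the proof of Theorem~\ref{thm1} by passing to the subcategories $\mathscr M^k$.
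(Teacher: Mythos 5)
Your proof is correct and is essentially the paper's own: the published proof consists of the single remark that the argument at the end of Subsection~\ref{s2.8} applies, and your reduction to normal form (which Subsection~\ref{s9.1} sets up for exactly this purpose) followed by the inverse-limit argument in $\mathscr{M}^k$ is precisely that adaptation. Your expanded justification of the key vanishing statement --- that a functor from $\mathcal{J}$ kills every subquotient of $V'\otimes_{\mathbb{C}}L$ of Gelfand--Kirillov dimension strictly smaller than $\mathrm{GKdim}(L)$ --- correctly supplies, via the simple quotient of $\theta X$, adjunction, and the monotonicity of the Gelfand--Kirillov dimension along the two-sided order, the cell-theoretic details that the paper compresses into the phrase ``by our assumption that $L$ is holonomic''.
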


\begin{proof}
This is proved using the same argument as at the end of Subsection~\ref{s2.8}.
\end{proof}

\subsection{Serre quotients}\label{s9.2}

Let $L$ be a simple $\mathfrak{g}$-module. In general, the module
$V\otimes_{\mathbb{C}} L$ need not have finite length in $\mathfrak{g}$-mod. 
In this subsection we introduce a natural subquotient of 
$\mathfrak{g}$-mod where $V\otimes_{\mathbb{C}} L$ always has
finite length and a well-defined notion of composition
multiplicities.

Let $\mathscr{A}=\mathscr{A}(L)$ denote the full subcategory of 
$\mathfrak{g}$-mod whose objects are all finitely 
generated $\mathfrak{g}$-modules isomorphic to subquotients of 
modules of the form $V\otimes_{\mathbb{C}} L$, where $V$ is
a finite dimensional $\mathfrak{g}$-module. This is an abelian 
subcategory of $\mathfrak{g}$-mod with the abelian structure
(for example, $\mathbb{C}$-linearity, kernels and cokernels) 
being inherited from $\mathfrak{g}$-mod. Thanks to exactness of
projective functors, the category $\mathscr{A}$ comes equipped 
with the natural action of projective functors.

Let $\mathscr{B}=\mathscr{B}(L)$ denote the full subcategory of 
$\mathscr{A}$ consisting of all modules of Gelfand-Kirillov
dimension strictly smaller than $\mathrm{GKdim}(L)$. From 
Subsection~\ref{s2.6} it follows that $\mathscr{B}$ is
a Serre subcategory of $\mathscr{A}$ as well as that $\mathscr{B}$ is
stable under the action of projective functors. Therefore
$\mathscr{A}/\mathscr{B}$ is an abelian category that has 
a natural action of projective functors. 

Let $\mathscr{C}=\mathscr{C}(L)$ denote the 
full  subcategory of $\mathscr{A}$ consisting of 
all objects $M$ of $\mathscr{A}$ that have the property that 
$\theta M=0$, for any indecomposable projective functor $\theta$
from the two-sided cell $\mathcal{J}$ corresponding to $\mathrm{Ann}_{U(\mathfrak{g})}(L)$.
Since projective functors are exact, $\mathscr{C}$ 
is a Serre subcategory of $\mathscr{A}$.

\begin{lemma}\label{lem-nn7}
The category $\mathscr{C}$ is stable under the action of 
projective functors.
\end{lemma}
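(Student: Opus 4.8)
The plan is to show that $\mathscr{C}$ is closed under applying an arbitrary indecomposable projective functor $\theta'$, which suffices since every projective functor is a direct sum of indecomposable ones and the category is additive. So fix $M\in\mathscr{C}$ and an indecomposable projective functor $\theta'$ (from $\mathscr{M}_{\chi_{{}_\lambda}}$ to some $\mathscr{M}_{\chi'}$, with $\chi_{{}_\lambda}$ the central character of $L$); I must check that $\theta\theta' M=0$ for every indecomposable $\theta$ in the two-sided cell $\mathcal{J}$ corresponding to $\mathrm{Ann}_{U(\mathfrak{g})}(L)$. First I would observe that $\theta' M$ is again an object of $\mathscr{A}$: indeed $M$ is a subquotient of some $V\otimes_{\mathbb{C}}L$, hence by exactness of $\theta'$ the module $\theta' M$ is a subquotient of $\theta'(V\otimes_{\mathbb{C}}L)\cong V\otimes_{\mathbb{C}}(\theta' L)$, and since $\theta'$ is a summand of tensoring with some finite dimensional $W$, the module $\theta' L$ is a summand of $W\otimes_{\mathbb{C}}L$, so $\theta' M$ is a subquotient of $(V\otimes_{\mathbb{C}}W)\otimes_{\mathbb{C}}L$. (Here I am using, as the statement permits, that $\mathrm{Ann}_{U(\mathfrak{g})}(\theta' L)$ is again handled by the same cell machinery, but strictly speaking I only need that $\theta' M\in\mathscr{A}$, which is automatic once we know $\mathscr{A}$ is defined relative to $L$ and all modules are built from $L$ by tensoring.)

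The heart of the argument is then the composition $\theta\theta'$. Since $\theta$ lies in the two-sided cell $\mathcal{J}$, which is the \emph{maximal} two-sided cell not annihilating $L$ (equivalently, the two-sided cell attached to $\mathrm{Ann}_{U(\mathfrak{g})}(L)$), any indecomposable projective functor appearing as a summand of $\theta\theta'$ is either again in $\mathcal{J}$, or lies in a two-sided cell strictly greater than $\mathcal{J}$ in the $\geq_J$ order, or lies strictly below. The summands strictly above $\mathcal{J}$ annihilate $L$ (by the very definition of $\mathcal{J}$ as recalled in Subsection~\ref{s3.1}: $\theta'' L\neq 0$ forces $\theta''\leq_J\mathcal{J}$), hence they annihilate every subquotient of $V\otimes_{\mathbb{C}}L$, in particular $M$; so they contribute nothing. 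The summands strictly below $\mathcal{J}$ produce modules of strictly smaller Gelfand-Kirillov dimension than $\mathrm{GKdim}(L)$ — this is again the holonomic-type reasoning used throughout the paper: such a $\theta''$ applied to $L$ has $\mathrm{GKdim}(\theta'' L)<\mathrm{GKdim}(L)$ by Subsection~\ref{s2.65}/\cite[Subsection~10.9]{Ja}, but these do not contribute $0$ on $M$ in general, so they must be handled more carefully; however, what I really need is only that the summands of $\theta\theta'$ \emph{that lie in $\mathcal{J}$} kill $M$, and this is immediate from $M\in\mathscr{C}$. Thus the only genuinely problematic contributions to $\theta\theta' M$ come from summands of $\theta\theta'$ below $\mathcal{J}$.

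To dispose of those, I would argue at the level of $\theta'\theta$ with the roles reversed, or more directly: note that $\theta\theta'$ decomposes as $\bigoplus_i \theta_i^{\oplus m_i}$ with each $\theta_i$ indecomposable, and write $\theta\theta' M=\bigoplus_i(\theta_i M)^{\oplus m_i}$. For $\theta_i\in\mathcal{J}$ we get $\theta_i M=0$ since $M\in\mathscr{C}$. For $\theta_i\not\leq_J\mathcal{J}$ we get $\theta_i L=0$ hence $\theta_i M=0$. The remaining case $\theta_i<_J\mathcal{J}$ is the one to worry about; but here I invoke the biadjunction: $\theta_i$ being in a lower two-sided cell, so is its biadjoint $\theta_i^*$, and for any object $X$ in the image $\theta_i^* X$ has $\mathrm{GKdim}<\mathrm{GKdim}(L)$, whence $\mathrm{Hom}_{\mathfrak{g}}(\theta_i^* N, L)=0$ cannot directly be used. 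Instead, the cleanest route — and the one I expect to be the intended proof given Proposition~\ref{prop-str2} and the remark that it is "proved using the same argument as at the end of Subsection~\ref{s2.8}" — is to not split $\theta\theta'$ at all, but simply to note: if $\theta''$ is indecomposable in $\mathcal{J}$, then $\theta''\theta'$ has all its summands either in $\mathcal{J}$ or strictly above $\mathcal{J}$ (a standard fact about multiplying by anything on one side: post-composing a cell-$\mathcal{J}$ morphism cannot produce strictly-lower cells — composition only goes "up" in the $\geq_J$ order from either factor). Hence every indecomposable summand $\theta_i$ of $\theta''\theta'$ satisfies $\theta_i\geq_J\mathcal{J}$, so $\theta_i\in\mathcal{J}$ (killing $M$) or $\theta_i>_J\mathcal{J}$ (killing $L$ and hence $M$). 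Therefore $\theta''\theta' M=0$ for all such $\theta''$, i.e. $\theta' M\in\mathscr{C}$.

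\textbf{Main obstacle.} The one point requiring care is the cell-order bookkeeping: one must be sure that for $\theta''\in\mathcal{J}$ and \emph{any} indecomposable projective functor $\theta'$, every indecomposable summand $\theta_i$ of $\theta''\circ\theta'$ satisfies $\theta_i\geq_J\mathcal{J}$ — this is the statement that $\theta''\circ\theta'\geq_J\theta''$, which is immediate from the definition of $\geq_J$ (if $\theta_i$ is a summand of $\theta''\circ\theta'$ then $\theta_i\geq_J\theta''$ by definition, and $\theta''\in\mathcal{J}$), so in fact there is no obstacle at all once the definitions are unwound; combined with the fact that functors strictly above $\mathcal{J}$ kill $L$ (hence every object of $\mathscr{A}$, each being a subquotient of some $V\otimes_{\mathbb{C}}L$), the lemma follows. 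The only thing to state explicitly is the standing convention that projective functors act on the right and that "above" in $\geq_J$ corresponds to larger Gelfand-Kirillov dimension, matching Subsection~\ref{s2.65}.
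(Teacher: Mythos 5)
Your final argument is exactly the paper's: every indecomposable summand $\theta_i$ of $\theta\circ\theta'$ satisfies $\theta_i\geq_J\theta\in\mathcal{J}$, so it either lies in $\mathcal{J}$ (and kills $M$ because $M\in\mathscr{C}$) or lies strictly above $\mathcal{J}$ (and kills $L$, hence every object of $\mathscr{A}$). The detour in your middle paragraphs about summands strictly below $\mathcal{J}$ is a false alarm that you correctly discard — such summands cannot occur — so the proposal is correct and takes essentially the same route as the paper.
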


\begin{proof}
Let $\theta'$ be a projective functor and $\theta$ be a projective functor 
from $\mathcal{J}$. Then any indecomposable summand in both $\theta\theta'$ 
and  $\theta'\theta$ is either in $\mathcal{J}$ or annihilates $L$.
This implies the claim of the lemma.
\end{proof}

Due to Lemma~\ref{lem-nn7}, the category  $\mathscr{A}/\mathscr{C}$ 
is an abelian category that has a natural action of projective functors.

\begin{conjecture}\label{conjn-9}
$\mathscr{B}=\mathscr{C}$.
\end{conjecture}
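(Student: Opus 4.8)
The plan is to prove the two inclusions separately, with the inclusion $\mathscr{C}\subseteq\mathscr{B}$ being the heart of the matter. The inclusion $\mathscr{B}\subseteq\mathscr{C}$ should be essentially immediate: if $M\in\mathscr{B}$, then $\mathrm{GKdim}(M)<\mathrm{GKdim}(L)$, and for any indecomposable $\theta\in\mathcal{J}$ we have $\theta^*\in\mathcal{J}$ as well, so by the argument at the end of Subsection~\ref{s2.8} (adjunction together with the fact that applying $\theta\in\mathcal{J}$ to a module of GK-dimension strictly smaller than $\mathrm{GKdim}(L)$ yields zero — this is exactly where holonomicity of $L$ is used, through $\mathrm{GKdim}(\theta L)=\mathrm{GKdim}(L)$ and the cell combinatorics of Subsection~\ref{s2.65}), one gets $\theta M=0$. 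Hence $M\in\mathscr{C}$. I would first isolate this as a short lemma, possibly noting that it holds without any extra hypothesis beyond what makes $\mathscr{C}$ well-behaved.

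For the reverse inclusion $\mathscr{C}\subseteq\mathscr{B}$, suppose $M\in\mathscr{A}$ has $\theta M=0$ for every indecomposable $\theta\in\mathcal{J}$, and suppose for contradiction that $\mathrm{GKdim}(M)=\mathrm{GKdim}(L)$ (it cannot be larger, by Lemma~\ref{lem-n4} and the fact that $M$ is a subquotient of some $V\otimes_{\mathbb{C}}L$). The strategy is to produce, inside $M$, a nonzero subquotient that is \emph{not} killed by all of $\mathcal{J}$, contradicting $M\in\mathscr{C}$ via Lemma~\ref{lem-nn7} (which tells us $\mathscr{C}$ is closed under projective functors, hence its objects really do have \emph{all} their $\mathcal{J}$-translates vanishing, and so do all their subquotients since $\mathscr{C}$ is Serre). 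Concretely: since $\mathrm{GKdim}(M)=\mathrm{GKdim}(L)$ and the Bernstein number is additive, $M$ has a subquotient $M'$ which is simple of GK-dimension $\mathrm{GKdim}(L)$, \emph{unless} $M$ is strange. In the non-strange case, $M'$ is a holonomic simple module whose annihilator lies in the two-sided cell $\mathcal{J}$ (by Corollary~\ref{cor-2.4.4.1} one knows the relevant simple subquotients all have the right GK-dimension; and the Duflo element $\tilde\theta\in\mathcal{J}$ of the left cell attached to $\mathrm{Ann}(M')$ satisfies $\tilde\theta M'\neq 0$ via the counit map, as in Subsection~\ref{s2.9}), so $M'\notin\mathscr{C}$, contradiction. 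In the strange case, Proposition~\ref{prop-str2} tells us that $\theta M=0$ for all $\theta\in\mathcal{J}$ already holds — so strange subquotients lie in $\mathscr{C}$ but may have GK-dimension $\mathrm{GKdim}(L)$, and this is precisely where the argument breaks down unless one also knows $M$ has no strange subquotient of full GK-dimension.

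Therefore the main obstacle — and I expect this to be genuinely hard — is ruling out that $M\in\mathscr{C}$ is (or contains) a strange subquotient in normal form of GK-dimension $\mathrm{GKdim}(L)$. Such a module is killed by all of $\mathcal{J}$ by Proposition~\ref{prop-str2}, so it lies in $\mathscr{C}$, yet it is not in $\mathscr{B}$; thus $\mathscr{C}\subseteq\mathscr{B}$ is \emph{equivalent} to the non-existence of strange subquotients of full GK-dimension, i.e. to Conjecture~\ref{conj-strange}. So the honest statement is: I would prove $\mathscr{B}\subseteq\mathscr{C}$ unconditionally (for holonomic $L$), prove $\mathscr{C}\subseteq\mathscr{B}$ conditionally on Conjecture~\ref{conj-strange}, and observe the two conjectures are in fact equivalent given Theorem~\ref{thm1} and Proposition~\ref{prop-str2}. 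To make it unconditional one would need a new idea for excluding strange subquotients entirely — e.g. an inductive argument on GK-dimension using the birepresentation-theoretic structure of $\mathbf{Y}^L$ (Conjecture~\ref{conj7}), showing that the Serre quotient $\mathscr{A}/\mathscr{C}$ has finite length with the "expected" simple objects and that any strange $M$ would force an impossible block-triangular action of $[\theta]$, $\theta\in\mathcal{J}$, on a split Grothendieck group — in the spirit of Lemma~\ref{lem-8-1} and Corollary~\ref{cor-8-2}. I would attempt that reduction in type $A$ first, where Conjecture~\ref{conj7} is available, and only then try to propagate it to the classical types covered by Theorem~\ref{thm1-n2}.
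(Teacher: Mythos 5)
This statement is labelled a conjecture in the paper, and the paper gives no proof of it: the only thing recorded there is the one\-/line remark that $\mathscr{B}\subset\mathscr{C}$ when $L$ is holonomic. So you should not expect to find a hidden argument you are missing --- there is none. Your proposal is, as an assessment of the situation, essentially correct and in fact goes a bit further than the paper does explicitly: you prove the easy inclusion $\mathscr{B}\subseteq\mathscr{C}$ by exactly the mechanism the paper uses at the end of Subsection~\ref{s2.8} (adjunction plus the vanishing of $\theta X$ for $\theta\in\mathcal{J}$ and $\mathrm{GKdim}(X)<\mathrm{GKdim}(L)$, which is where holonomicity enters), and you correctly identify that the reverse inclusion is, for holonomic $L$, equivalent to Conjecture~\ref{conj-strange} via Theorem~\ref{thm1} and Proposition~\ref{prop-str2}. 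That equivalence is a genuinely useful observation, and your honesty about being unable to exclude strange subquotients is exactly right: that exclusion is the open content of both conjectures.

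One step in your sketch deserves more care if you ever write it up. In the non-strange case you assert that a simple subquotient $M'$ of $V\otimes_{\mathbb{C}}L$ with $\mathrm{GKdim}(M')=\mathrm{GKdim}(L)$ cannot lie in $\mathscr{C}$, because the Duflo element of the left cell attached to $\mathrm{Ann}_{U(\mathfrak{g})}(M')$ does not kill $M'$. For that Duflo element to be a $1$-morphism in $\mathcal{J}$ you need to know that the two-sided cell of $\mathrm{Ann}_{U(\mathfrak{g})}(M')$ is again $\mathcal{J}$, and not some other cell of the same Gelfand--Kirillov dimension. You cite Corollary~\ref{cor-2.4.4.1} for this, but that corollary is proved only in type $A$; in general one would have to argue separately (e.g.\ via the compatibility of the two-sided order with Lusztig's $\mathbf{a}$-function, together with holonomicity of $M'$, which is itself not automatic). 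This does not affect your main conclusion --- the conjecture remains open either way --- but it is the one place where your reduction is looser than it looks.
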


Note that, if $L$ is holonomic, we have $\mathscr{B}\subset \mathscr{C}$.

\subsection{Rough structure  of modules in $\mathscr{A}$}\label{s9.3}

\begin{theorem}\label{prop-JH}
Assume that $L$  is holonomic.

\begin{enumerate}[$($a$)$]
\item \label{prop-JH.1} The category $\mathscr{A}/\mathscr{C}$ 
is an abelian  length category.
\item \label{prop-JH.2} Simple  objects in 
$\mathscr{A}/\mathscr{C}$ are in bijection with 
isomorphism classes of simple subquotients 
with GK-dimension $\mathrm{GKdim}(L)$ in modules 
of the form  $V\otimes_{\mathbb{C}} L$, where $V$ is
a finite dimensional $\mathfrak{g}$-module.
\item \label{prop-JH.3} Every object in
$\mathscr{A}/\mathscr{C}$ has well-defined 
composition multiplicities. 
\end{enumerate}
\end{theorem}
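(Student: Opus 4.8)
The plan is to deduce all three parts from the Bernstein number estimate in Subsection~\ref{s2.6} together with Theorem~\ref{thm1} and Proposition~\ref{prop-str2}. First I would prove part~\eqref{prop-JH.1}. Fix $M\in\mathscr{A}$, so $M$ is a subquotient of some $V\otimes_{\mathbb C}L$. The key numerical invariant is the number of simple subquotients of $M$ of Gelfand--Kirillov dimension $\mathrm{GKdim}(L)$; by the additivity of the Bernstein number (Subsection~\ref{s2.6}), this number is bounded above by $\dim(V)\cdot\mathrm{BN}(L)$, in particular it is finite and is additive on short exact sequences in $\mathscr{A}$. An object of $\mathscr{A}$ lies in $\mathscr{C}$ if and only if it is annihilated by every indecomposable projective functor from $\mathcal{J}$; since $L$ is holonomic we have $\mathscr{B}\subseteq\mathscr{C}$, so any object of $\mathscr{C}$ has no simple subquotient of GK-dimension $\mathrm{GKdim}(L)$ (such a simple subquotient $\tilde L$ would have $\mathrm{Ann}_{U(\mathfrak g)}(\tilde L)$ in the $\mathcal{J}$-cell, hence would not be killed by the Duflo functor of its own cell). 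Conversely, I claim an object of $\mathscr{A}$ with no simple subquotient of GK-dimension $\mathrm{GKdim}(L)$ lies in $\mathscr{C}$: any such object is a (finite) iterated extension of simple modules of smaller GK-dimension and of strange subquotients (in the sense of Subsection~\ref{s4.4}), and Proposition~\ref{prop-str2} together with the holonomicity of $L$ (which gives $\theta X=0$ for $\theta\in\mathcal{J}$ and $X$ of GK-dimension $<\mathrm{GKdim}(L)$) shows each such layer is killed by all $\theta\in\mathcal{J}$, hence so is the whole object. Therefore the image of $M$ in $\mathscr{A}/\mathscr{C}$ has a finite composition series whose length equals the (finite, additive) count above, proving that $\mathscr{A}/\mathscr{C}$ is a length category.

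For part~\eqref{prop-JH.2}, the simple objects of the Serre quotient $\mathscr{A}/\mathscr{C}$ are, by the general theory of Serre quotients, the images of those objects of $\mathscr{A}$ that are simple modulo $\mathscr{C}$, i.e.\ objects $M$ with $M\notin\mathscr{C}$ such that every proper subobject and every proper quotient lies in $\mathscr{C}$. By the dichotomy established above, $M\notin\mathscr{C}$ exactly when $M$ has a simple subquotient $S$ of GK-dimension $\mathrm{GKdim}(L)$, and minimality forces $M$ to have exactly one such $S$ (up to the count being $1$); one then checks that the image of $M$ in $\mathscr{A}/\mathscr{C}$ is isomorphic to the image of $S$, and that two such simples $S,S'$ give isomorphic objects in $\mathscr{A}/\mathscr{C}$ precisely when $S\cong S'$ in $\mathfrak g\text{-mod}$ (if $S\not\cong S'$ then $S\oplus S'$ injects into a direct sum of injective hulls and the two images are distinguished by the corresponding $\mathrm{Hom}$-spaces, which survive in the quotient since $\mathscr{C}$-objects have no such simple subquotient). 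This yields the asserted bijection between simple objects of $\mathscr{A}/\mathscr{C}$ and isomorphism classes of simple subquotients of GK-dimension $\mathrm{GKdim}(L)$ occurring in modules $V\otimes_{\mathbb C}L$.

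Part~\eqref{prop-JH.3} is then immediate: an abelian length category automatically has the Jordan--H\"older property, so every object of $\mathscr{A}/\mathscr{C}$ has a well-defined multiset of composition factors; translated back via part~\eqref{prop-JH.2}, the composition multiplicity of $[S]$ in $M$ is the number of occurrences of $S$ among the simple subquotients of GK-dimension $\mathrm{GKdim}(L)$ in a GK-filtration of $M$, which is independent of the filtration by the additivity of the Bernstein number.

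The step I expect to be the main obstacle is the converse inclusion in part~\eqref{prop-JH.1}, namely showing that an object of $\mathscr{A}$ with no simple subquotient of GK-dimension $\mathrm{GKdim}(L)$ is actually killed by all $\theta\in\mathcal{J}$ --- this is exactly where the strange subquotients must be ruled out via Proposition~\ref{prop-str2}, and where holonomicity of $L$ is essential; without it, Conjecture~\ref{conjn-9} would be needed. The bookkeeping that reduces a general such object to a finite tower of simple-of-smaller-dimension layers and strange-in-normal-form layers is the technical heart, and one must be careful that the filtration has finite length, which again follows from the Bernstein number bound applied to the ambient $V\otimes_{\mathbb C}L$.
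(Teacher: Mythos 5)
Your proof is correct and follows essentially the same route as the paper's: both arguments rest on the additivity and boundedness of the Bernstein number, on Proposition~\ref{prop-str2} together with holonomicity (so that strange subquotients and subquotients of smaller GK-dimension lie in $\mathscr{C}$), and on filtering an object of $\mathscr{A}$ into finitely many layers each of which is either of smaller GK-dimension, strange, or carries a unique simple subquotient of GK-dimension $\mathrm{GKdim}(L)$ (the paper's ``primitive'' subquotients). The only divergences are cosmetic: for part~(c) the paper additionally extracts the concrete formula $[X:Y]=\dim\mathrm{Hom}_{\mathfrak{g}}(X,I_Y)$ via injective envelopes, whereas you deduce well-definedness abstractly from the Jordan--H{\"o}lder property of length categories (both are valid), and you should say ``modules of smaller GK-dimension'' rather than ``simple modules of smaller GK-dimension'' in your filtration, since those layers need not have finite length --- a slip that does not affect the argument.
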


As suggested in \cite{MS08}, for $X\in \mathscr{A}$,
the part of the structure of  $X$ which can be seen
in $\mathscr{A}/\mathscr{C}$ (including the 
multiplicities in $X$ of simple $\mathfrak{g}$-modules
with GK-dimension $\mathrm{GKdim}(L)$) is called the 
{\em rough structure} of $X$.

\begin{proof}[Proof of Theorem~\ref{prop-JH}.]
A subquotient $X$ of some  $V\otimes_{\mathbb{C}} L$
will be called {\em primitive } provided that, for any 
submodule $Y\subset X$, at most one of the modules
$Y$ or $X/Y$ has GK-dimension $\mathrm{GKdim}(L)$.

Given a primitive subquotient $X$ of some  
$V\otimes_{\mathbb{C}} L$, the definitions of
$\mathscr{A}$, $\mathscr{C}$  and the Serre quotient 
give us three options:
\begin{itemize}
\item The GK-dimension of $X$ is strictly smaller than
$\mathrm{GKdim}(L)$. In this case $X=0$ in 
$\mathscr{A}/\mathscr{C}$.
\item The module $X$  is strange. In this case $X=0$ in 
$\mathscr{A}/\mathscr{C}$.
\item The module $X$ has a unique simple subquotient
$X'$ of  GK-dimension $\mathrm{GKdim}(L)$.
In this case $X=X'$ in $\mathscr{A}/\mathscr{C}$.
\end{itemize}
This implies Claim~\eqref{prop-JH.2}.

The category $\mathscr{A}/\mathscr{C}$ is abelian by 
construction. That every object in $\mathscr{A}/\mathscr{C}$
has finite length follows from the additivity of the
Bernstein number, since it is a positive integer and the 
Bernstein number of $V\otimes_{\mathbb{C}} L$ is finite.
This implies Claim~\eqref{prop-JH.1}.

Let $X$ and $Y$ be in $\mathscr{A}$ such that 
$Y$  is a simple  $\mathfrak{g}$-module of 
GK-dimension $\mathrm{GKdim}(L)$. Let $I_Y$
be the injective envelope of $Y$ in $\mathfrak{g}$-Mod.
Let 
\begin{displaymath}
0=X_0\subset X_1\subset X_2\subset\dots \subset X_k=X 
\end{displaymath}
be a filtration of $X$ such that all subquotients are
primitive (since the number of possible  subquotient
of GK-dimension $\mathrm{GKdim}(L)$ is bounded,
such a filtration exists). Then $X_i/X_{i-1}$ has $Y$ as a subquotient
if and only if there is homomorphism from 
$X_i/X_{i-1}$ to $I_Y$. As $X_i/X_{i-1}$ is assumed to
be primitive, the dimension of 
$\mathrm{Hom}_{\mathfrak{g}}(X_i/X_{i-1},I_Y)$ equals one
(since the endomorphism algebra of $Y$ has dimension one
by Dixmier-Schur's lemma).
Therefore the composition multiplicity of $Y$  in $X$
is finite and equals the dimension of 
$\mathrm{Hom}_{\mathfrak{g}}(X,I_Y)$.
\end{proof}


\noindent
M.~M.: Center for Mathematical Analysis, Geometry, and Dynamical Systems, Departamento de Matem{\'a}tica, 
Instituto Superior T{\'e}cnico, 1049-001 Lisboa, PORTUGAL \& Departamento de Matem{\'a}tica, FCT, 
Universidade do Algarve, Campus de Gambelas, 8005-139 Faro, PORTUGAL, email: {\tt mmackaay\symbol{64}ualg.pt}

\noindent
Vo.~Ma.: Department of Mathematics, Uppsala University, Box. 480,
SE-75106, Uppsala, SWEDEN, email: {\tt mazor\symbol{64}math.uu.se}

\noindent
Va.~Mi.: School of Mathematics, University of East Anglia, Norwich, 
NR4 7TJ, UK, email: {\tt V.Miemietz\symbol{64}uea.ac.uk}

\end{document}